\let\normalint\int 
\def\int{\displaystyle\normalint} 
\numberwithin{equation}{section}
\theoremstyle{remark}
\newtheorem{example}[equation]{Example}
\newtheorem{rmk}[equation]{Remark}
\theoremstyle{plain}
\newtheorem{thm}[equation]{Theorem}
\newtheorem{theorem}[equation]{Theorem}
\newtheorem{lemma}[equation]{Lemma}
\newtheorem{proposition}[equation]{Proposition}
\newtheorem*{thmmain}{Main Theorem}
\newtheorem{lem}[equation]{Lemma}
\newtheorem{corollary}[equation]{Corollary}
\DeclareMathOperator{\Aut}{Aut}
\DeclareMathOperator{\coarea}{coarea}
\DeclareMathOperator{\covol}{covol}
\DeclareMathOperator{\Gal}{Gal}
\DeclareMathOperator{\impart}{Im}
\DeclareMathOperator{\Isom}{Isom}
\DeclareMathOperator{\lcm}{lcm}
\DeclareMathOperator{\M}{M}
\DeclareMathOperator{\N}{N}
\DeclareMathOperator{\nrd}{nrd}
\DeclareMathOperator{\PGL}{PGL}
\DeclareMathOperator{\PSL}{PSL}
\DeclareMathOperator{\Ram}{Ram}
\DeclareMathOperator{\Reg}{Reg}
\DeclareMathOperator{\SL}{SL}
\DeclareMathOperator{\trd}{trd}
\DeclareMathOperator{\vol}{vol}
\newcommand{\C}{\mathbb C}
\newcommand{\HH}{\mathbb H}
\newcommand{\PP}{\mathbb P}
\newcommand{\Q}{\mathbb Q}
\newcommand{\R}{\mathbb R}
\newcommand{\Z}{\mathbb Z}
\newcommand{\bfP}{\mathbb P}
\newcommand{\frakd}{\mathfrak{d}}
\newcommand{\frakl}{\mathfrak{l}}
\newcommand{\frakp}{\mathfrak{p}}
\newcommand{\frakq}{\mathfrak{q}}
\newcommand{\frakD}{\mathfrak{D}}
\newcommand{\frakN}{\mathfrak{N}}
\newcommand{\frakP}{\mathfrak{P}}
\newcommand{\fraks}{\nu}
\newcommand{\scrC}{\mathscr{C}}
\newcommand{\scrS}{\mathscr{S}}
\newcommand{\calE}{\mathcal{E}}
\newcommand{\calH}{\mathcal{H}}
\newcommand{\calO}{\mathcal{O}}
\newcommand{\eps}{\epsilon}
\newcommand{\Htwo}{\mathbf{H}^2}
\newcommand{\Hthree}{\mathbf{H}^3}
\newcommand{\Dzambic}{D$\check{\text{z}}$ambi\'c}
\newcommand{\quat}[2]{\displaystyle{\biggl(\frac{#1}{#2}\biggr)}}
\def\mc{\mathcal}
\newcommand{\defi}[1]{{{\fontfamily{lmss}\selectfont \textit{#1}}}}
\begin{document}
\title{Commensurability classes of fake quadrics}
\author{Benjamin Linowitz}
\address{Department of Mathematics, Oberlin College, 10 North Professor Street, Oberlin, OH 44074, USA}
\email{benjamin.linowitz@oberlin.edu}
\author{Matthew Stover}
\address{Department of Mathematics, Temple University, 1805 N.\ Broad Street, Philadelphia, PA 19122, USA}
\email{mstover@temple.edu}
\author{John Voight}
\address{Department of Mathematics, Dartmouth College, 6188 Kemeny Hall, Hanover, NH 03755, USA}
\email{jvoight@gmail.com}


\maketitle

\begin{abstract}
A fake quadric is a smooth projective surface that has the same rational cohomology as a smooth quadric surface but is not biholomorphic to one. We provide an explicit classification of all irreducible fake quadrics according to the commensurability class of their fundamental group. To accomplish this task, we develop a number of new techniques that explicitly bound the arithmetic invariants of a fake quadric and more generally of an arithmetic manifold of bounded volume arising from a form of $\SL_2$ over a number field.
\end{abstract}

\subsection*{Introduction}

Mumford asked in the early 1980s whether surfaces with geometric genus $p_g = 0$ can be enumerated by computer. A major step toward answering this in the affirmative was the classification of fake projective planes, smooth projective surfaces with the same rational cohomology as $\PP_\C^2$ but not biholomorphic to it, by Prasad--Yeung \cite{Prasad--Yeung} and Cartwright--Steger \cite{Cartwright--Steger}. Another important family of surfaces with $p_g = 0$ are \defi{fake quadrics}, which are smooth projective surfaces with the same rational cohomology as, but not biholomorphic to, the quadric surface $\PP_\C^1 \times \PP_\C^1$.

The \defi{reducible} fake quadrics, those finitely covered by a direct product of smooth projective curves, were classified by Bauer--Catanese--Grunewald \cite{Bauer--Catanese--Grunewald}, and these fall into $18$ families \cite{Bauer--Catanese--Grunewald2}. A fake quadric uniformized by the product $\calH = \Htwo \times \Htwo$ of two hyperbolic planes that is not reducible is called \defi{irreducible}; irreducible fake quadrics are necessarily \defi{arithmetic} (see \S \ref{sec:Arithmetic}) and thus are finite covers of \defi{quaternionic Shimura surfaces}. Examples of irreducible fake quadrics are known \cite{Shavel, Takeuchi, Dzambic}, and the contribution of this paper, analogous to the work of Prasad--Yeung \cite{Prasad--Yeung}, is to classify the maximal irreducible lattices in the holomorphic isometry group $\Isom^h(\calH)$ of $\calH$ that contain the fundamental group of a fake quadric.

The broad strategy of our classification is to enumerate maximal arithmetic lattices of bounded covolume, which has been employed to solve a number of similar problems. However, the tools used in other contexts are thoroughly insufficient for enumerating fake quadrics. This paper introduces several new techniques that bring this problem into practical range. Our methods are relevant for studying arbitrary forms of $\SL_2$ over number fields, where volume estimates for arithmetic lattices are often most subtle, and we present them in this generality.

Another important feature of this paper that does not appear in previous work on arithmetic fake quadrics and their generalizations is that we study \emph{all} maximal lattices that can contain the fundamental group of a fake quadric. Previous papers only consider lattices in $\PGL_2^+(\R) \times \PGL_2^+(\R)$, which has index $2$ in $\Isom^h(\calH)$; the quotient is generated by the \defi{swap map}, the holomorphic isometry that exchanges the two factors of $\calH$. In particular, there are torsion-free lattices $\Gamma$ in $\Isom^h(\calH)$ not contained in $\PGL_2^+(\R) \times \PGL_2^+(\R)$. We define the \defi{stable subgroup} $\Gamma_{st}$ of a lattice $\Gamma \leq \Isom^h(\calH)$ to be the subgroup of elements that preserve the factors of $\calH$. As described below, the proof of our main results then follows from classifying the stable lattices of covolume equal to twice the volume of a fake quadric. Due to the nature of the analytic bounds that one employs in this situation, the extra factor of $2$ turns out to be quite significant. It is of both geometric and arithmetic interest when the stable subgroup $\Gamma_{st}$ of a lattice $\Gamma \leq \Isom(\calH)$ is a proper subgroup, and we give a complete characterization in Theorem \ref{thm:galoisdescswap}. For example, we show that $\Gamma_{st}=\Gamma$ when $\Aut(k)$ is trivial, where $k$ is the trace field of $\Gamma$. 





The main result of this paper is the following theorem.

\begin{thmmain}\label{thm:Classification}
Let $X=\Gamma \backslash \calH$ be an irreducible fake quadric. Then $\Gamma$ is a subgroup of one of the maximal arithmetic lattices in $\Isom^h(\calH)$ enumerated in the Appendix.
\end{thmmain}

Briefly, the Main Theorem is proven as follows. Equip $\Htwo$ with the usual metric of constant curvature $-1$ and $\calH$ with the product metric. By the Chern--Gauss--Bonnet theorem, any fake quadric $X$ uniformized by $\calH$ has volume $\vol(X)=16 \pi^2$ in the induced metric. Conversely, every irreducible torsion-free lattice in $\Isom^h(\calH)$ of covolume $16 \pi^2$ acting by holomorphic isometries necessarily determines a fake quadric. We then classify all maximal irreducible lattices in $\Isom^h(\calH)$ that can possibly contain a torsion-free subgroup of covolume $16 \pi^2$. The entries in the Appendix are indexed by commensurability class, so the torsion-free subgroups of the lattices in the Appendix with the appropriate index describe all irreducible fake quadrics. 

Previous papers studying covolume for forms of $\SL_2$ rely heavily on the classic paper of Chinburg--Friedman \cite{chinburg-smallestorbifold} on the smallest volume arithmetic hyperbolic $3$-orbifold. Their methods are based upon finding an upper bound for the degree over $\Q$ of the field of definition for the arithmetic group, deducing upper bounds for the order of a certain class group, and then reducing the classification to a finite calculation. In this paper, crucially we present a two-pronged variant of the Chinburg--Friedman method (Theorem \ref{theorem:lemma43generalization}) where one bound is preferable for fields of large degree, the other preferable in small degree but potentially large class number (the main technical obstruction found in previous work). This is still not sufficient to complete the classification, and our second method is the \emph{Fuchsian gambit} (Proposition \ref{prop:quatgambit}), which exploits the geometry of a closely related Shimura curve and the classical expression for its signature to bound the order of a certain class group.



\subsection*{Relation with previous work}

The basic strategy of this paper was used for classification purposes in several other important contexts. Most notably, Chinburg--Friedman \cite{chinburg-smallestorbifold} determined the arithmetic hyperbolic $3$-orbifold with smallest volume, Chinburg--Friedman--Jones--Reid \cite{CFJR} found the arithmetic hyperbolic $3$-manifold of minimal volume, and Prasad--Yeung \cite{Prasad--Yeung} and Cartwright--Steger \cite{Cartwright--Steger} classified fake projective planes with this circle of ideas. Prasad--Yeung also considered higher-dimensional fake spaces \cite{Prasad--Yeung2, Prasad--Yeung3}. Belolipetsky \cite{Belolipetsky} and Belolipetsky--Emery \cite{BE} also studied higher-dimensional hyperbolic orbifolds and manifolds; see also the recent survey by Belolipetsky \cite{misha:ICM} and the references therein.


Independently, \Dzambic\ \cite{Dzambic} obtained bounds on the degree of a totally real field $k$ giving rise to an irreducible fake quadric under the additional hypothesis that the fundamental group is stable in the sense described above. Unfortunately, even when restricting to stable groups, our lists do not agree (Remark \ref{rmk:dzambicerror}). While these two papers share the same general outline, the scope and strategy of the calculations are quite different.

\subsection*{Contents}

In \S \ref{sec:background}, we describe some basic results on the geometry of fake quadrics and define the basic number theoretic objects used throughout this paper. In \S \ref{sec:Arithmetic}, we get into the finer details of arithmetic lattices arising from quaternion algebras, characterize stable subgroups, and discuss torsion in arithmetic lattices. The technical heart of the paper is \S \ref{ssec:chinburgfriedmanlemma43}, where we generalize volume bounds of Chinburg and Friedman \cite{chinburg-smallestorbifold} for irreducible arithmetic lattices acting on arbitrary products of hyperbolic $2$- and $3$-spaces and introduce the Fuchsian gambit. In \S \ref{sec:enumeration}, we use these bounds to put strong restrictions on the number fields associated with irreducible fake quadrics, then prove some additional constraints and describe how one can enumerate the commensurability classes of irreducible fake quadrics using \textsc{Magma}. The Appendix presents the results of these calculations.

The authors would like to thank Carl Pomerance and Alan Reid for advice and the referees for their constructive feedback and comments on the code. The first author was partially supported by an NSF RTG grant DMS-1045119 and an NSF Mathematical Sciences Postdoctoral Fellowship. The second author was supported by NSF grant DMS-1361000, and acknowledges support from U.S. National Science Foundation grants DMS 1107452, 1107263, 1107367 "RNMS: GEometric structures And Representation varieties" (the GEAR Network). The third author was supported by an NSF CAREER Award (DMS-1151047).

\section{Background} \label{sec:background}

\subsection*{Geometry of fake quadrics}

This section describes the basic results on fake quadrics we use. A good reference is the book of Griffiths--Harris \cite{Griffiths--Harris}.

Let $X$ be a smooth projective surface. The real cohomology of $X$ then has a Hodge decomposition
\[
H^k(X, \R) \cong \bigoplus_{i + j = k} H^{i, j}(X).
\]
If $h^{i,j}(X)=\dim_\R H^{i,j}(X)$, then, since $X$ is projective and hence K\"ahler, $h^{i, j}(X) = h^{j, i}(X)$. The \defi{holomorphic Euler characteristic} is
\[
\chi(X) = h^{0,0}(X) - h^{0,1}(X) + h^{0,2}(X).
\]
If $c_j(X)$ is the $j$th Chern number of $X$, then $c_2(X)$ is the topological Euler characteristic and $c_1^2(X)$ is the self-intersection of the canonical bundle of $X$. These quantities are related by Noether's formula:
\begin{equation}\label{eq:Noether}
\chi(X) = \frac{c_1^2(X) + c_2(X)}{12}.
\end{equation}

A \defi{fake quadric} is a smooth projective surface $X$ with the same rational cohomology as $Q = \PP_\C^1 \times \PP_\C^1$ but not isomorphic to it. If $X$ is a fake quadric, then $X$ has geometric genus $p_g(X) = h^{0,2}(X) = 0$ and $\chi(X) = 1$.

If $\Htwo = \{ z = x + y i \in \C : y = \impart(z) > 0 \}$ is the upper half-plane model for hyperbolic $2$-space, equip $\Htwo$ with the metric $ds^2 = (dx^2 + dy^2) / y^2$, so that $\Htwo$ has constant negative curvature $-1$. Let $\calH=\Htwo \times \Htwo$ and $G = \Isom^h(\calH)$ be the group of holomorphic isometries of $\calH$ (see \S \ref{sec:Arithmetic} for more on this group) and $\Gamma \leq G$ be a torsion-free, discrete subgroup such that the quotient $X = \Gamma \backslash \calH$ is compact. Then $X$ is a smooth projective surface. Hirzebruch's proportionality principle \cite{Hirzebruch} says that the ratio 
\[
\frac{c_1^2(X)}{c_2(X)} = \frac{c_1^2(\calH\spcheck)}{c_2(\calH\spcheck)} = 2,
\] 
since $\calH\spcheck = \bfP_\C^1 \times \bfP_\C^1$ is the compact complex dual to $\calH$. It follows that $c_1^2(X) = 2 c_2(X)$, and hence, by \eqref{eq:Noether}, $c_2(X) = 4 \chi(X)$.

A surface $X = \Gamma \backslash \calH$ as above is \defi{irreducible} if it is not finitely covered by a direct product of (complex) curves. If $X$ is irreducible, then a theorem of Matsushima--Shimura \cite{Matsushima--Shimura} implies that $H^1(X, \Q) = \{0\}$, so $h^{0,1} = h^{1,0} = 0$. Applying the Chern--Gauss--Bonnet theorem, we find that
\[
\vol(X) = (2 \pi)^2 c_2(X) = (16 \pi^2) \chi(X).
\]
Putting these together, we have shown that if $X$ is an irreducible fake quadric, then:
\[
\chi(X) = 1 \quad c_2(X) = 4 \quad c_1(X)^2 = 8 \quad \vol(X) = 16\pi^2
\]
Furthermore, the the latter equality plus irreducibility is sufficient for the converse.

\begin{lem}\label{lem:Numerical}
Suppose $\Gamma \leq \Isom^h(\calH)$ is a torsion-free lattice and $X = \Gamma \backslash \calH$ is compact and irreducible. Then $X$ is a fake quadric if and only if $\vol(X) = 16 \pi^2$.
\end{lem}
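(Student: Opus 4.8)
The plan is to prove both implications, leaning on the cohomological bookkeeping already assembled above. The forward direction is essentially immediate: if $X$ is a fake quadric, then by definition it has the rational cohomology of $Q = \PP_\C^1 \times \PP_\C^1$, whence $\chi(X) = 1$; combining the proportionality identity $c_2(X) = 4\chi(X)$ with the Chern--Gauss--Bonnet formula $\vol(X) = (2\pi)^2 c_2(X)$ recorded above gives $\vol(X) = 16\pi^2 \chi(X) = 16\pi^2$. The real content is therefore the converse.

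For the converse, I would start from $\vol(X) = 16\pi^2$ and run the same two identities backwards: since $\vol(X) = (2\pi)^2 c_2(X) = 16\pi^2\chi(X)$, the hypothesis forces $\chi(X) = 1$ and $c_2(X) = 4$. It then remains to reconstruct the full Hodge diamond of $X$ and check that it agrees with that of $Q$, namely $h^{0,0} = h^{2,2} = 1$, $h^{1,1} = 2$, and all other $h^{i,j} = 0$. First, irreducibility together with the theorem of Matsushima--Shimura gives $H^1(X,\Q) = 0$, so $h^{1,0} = h^{0,1} = 0$ and hence $b_1 = 0$. Next, $\chi(X) = h^{0,0} - h^{0,1} + h^{0,2} = 1$ with $h^{0,0} = 1$ and $h^{0,1} = 0$ forces $h^{0,2} = 0$, and Hodge symmetry gives $h^{2,0} = 0$. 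Finally, using $b_0 = b_4 = 1$ and $b_1 = b_3 = 0$ (the latter by Poincar\'e duality) in $c_2(X) = \sum_i (-1)^i b_i = b_2 + 2 = 4$ yields $b_2 = 2$, and since $b_2 = h^{2,0} + h^{1,1} + h^{0,2} = h^{1,1}$ we conclude $h^{1,1} = 2$. This matches the Hodge numbers of $Q$ exactly, so $X$ and $Q$ have the same rational cohomology.

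It remains to see that $X$ is \emph{not} biholomorphic to $Q$, and here I would appeal to the fundamental group rather than to cohomology. Because $X = \Gamma \backslash \calH$ with $\calH$ contractible and $\Gamma$ acting freely, $X$ is aspherical with $\pi_1(X) \cong \Gamma$; since $X$ is compact while $\calH$ is not, $\Gamma$ is infinite. On the other hand $Q = \PP_\C^1 \times \PP_\C^1$ is simply connected, so $X$ and $Q$ are not even homeomorphic, a fortiori not biholomorphic. This identifies $X$ as a fake quadric and closes the converse.

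I expect the only subtlety to be conceptual rather than computational: one must take care that ``same rational cohomology'' is verified at the level of Betti and Hodge numbers, which the argument above supplies, and that the failure of biholomorphism is argued topologically via $\pi_1$ rather than by a cohomology comparison, since by construction $X$ and $Q$ have identical cohomology. Everything else is a direct reassembly of the Noether, Hirzebruch, and Chern--Gauss--Bonnet relations together with Matsushima--Shimura, all of which are available from the preceding discussion.
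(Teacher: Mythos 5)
Your proposal is correct and follows essentially the same route as the paper, which derives the lemma from the identities $c_2(X) = 4\chi(X)$ and $\vol(X) = (2\pi)^2 c_2(X) = 16\pi^2\chi(X)$ together with the Matsushima--Shimura vanishing of $H^1(X,\Q)$ for irreducible quotients; the paper simply asserts that these relations plus irreducibility suffice for the converse, and you have supplied the bookkeeping it leaves implicit. Your additional observation that $X \not\cong Q$ because $\pi_1(X) \cong \Gamma$ is infinite while $Q$ is simply connected is a worthwhile explicit justification of a point the paper takes for granted.
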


Therefore, our task of classifying fake quadrics becomes the problem of enumerating torsion-free lattices $\Gamma \leq \Isom^h(\calH)$ such that $\vol(\Gamma \backslash \calH) = 16 \pi^2$.

\subsection*{Number fields and quaternion algebras}\label{sec:NumThy}

Basic references for quaternion algebras are Maclachlan--Reid \cite{Maclachlan--Reid} and Vign\'eras \cite{vigneras1980}.

Let $k$ be a number field with degree $n = [k : \Q]$, signature $(r_1,r_2)$, absolute discriminant $d$, and root discriminant $\delta=d^{1/n}$. Also, let $\calO_k$ be the ring of integers of $k$ and $\zeta_k(s)$ its Dedekind zeta function. We let $w$ be the number of roots of unity contained in $k$, $h$ be the class number, and $\Reg$ be the regulator of $k$.

If $\calO_k^\times$ is the unit group of $\calO_k$ and $\calO_{k, +}^\times \leq \calO_k^\times$ is the subgroup of \defi{totally positive units}, consisting of units that are positive under every embedding of $k$ into $\R$, then
\[
\calO_{k,+}^\times / \calO_k^{*2} \cong (\Z/2\Z)^{m}
\]
for some $m \in \Z_{\geq 0}$. When $k$ is not totally complex, we have $-1 \notin \calO_{k, +}^\times$ and hence $m \ge 1$. Since $\calO_k^{*2}$ has index $2^{n}$ in $\calO_k^\times$, it follows that
\[
[\calO_k^\times : \calO_{k, +}^{*2}] = 2^{n - m} \ge 2.
\]

A \defi{quaternion algebra} $B$ over $k$ is a central simple algebra of dimension $4$ over $k$, or equivalently, a $k$-algebra with generators $i,j \in B$ such that
\[
i^2 = a, \quad j^2 =b, \quad j i = -i j
\]
with $a, b \in k^\times$. An algebra described in by $i, j$ as above is denoted $B=\quat{a,b}{k}$. A quaternion algebra $B$ over $k$ has a unique (anti-)involution $\overline{\phantom{x}}:B \to B$ such that the \defi{reduced trace} $\trd(\alpha)=\alpha+\overline{\alpha}$ and the \defi{reduced norm} $\nrd(\alpha)=\alpha\overline{\alpha}$ belong to $k$ for all $\alpha \in B$.

Let $v$ be a place of $k$ and $k_v$ denote the completion of $k$ at $v$. Then $B$ is \defi{split} at $v$ if $B_v=B \otimes_k {k_v} \cong \M_2(k_v)$, and otherwise $B$ is \defi{ramified} at $v$ (in which case $B_v$ is a division ring). The set $\Ram(B)$ of ramified places of $B$ is a finite set of noncomplex places of $k$ having even cardinality. Further, the set $\Ram(B)$ uniquely characterizes $B$ up to $k$-algebra isomorphism. Let $\Ram_\infty(B)$ (resp.\ $\Ram_f(B)$) be the infinite (resp.\ finite) places of $k$ in $\Ram(B)$.

An \defi{order} $\calO \subset B$ is a subring of $B$ (with $1 \in \calO$) that is finitely generated as an $\calO_k$-submodule and that contains a basis for $B$ over $k$. An order is \defi{maximal} if it is not properly contained in another order. Every order is contained in a maximal order, and there are finitely many maximal orders in $B$ up to conjugation by $B^\times$.
\section{Arithmetic lattices and quaternion algebras}\label{sec:Arithmetic}

In this section, we describe the construction of lattices arising from quaternion algebras. We retain the notation from \S \ref{sec:background}. In particular, $k$ is a number field and $B$ is a quaternion algebra over $k$.

\subsection*{Lattices potentially yielding fake quadrics}

Suppose that $B$ splits at precisely $a$ real places and let $b=r_2$. Then
\[
B_\R = B \otimes_\Q \R \cong \mathrm{M}_2(\R)^a \oplus \HH^{r_1 - a} \oplus \M_2(\C)^{b}
\]
where $\HH$ is the real division algebra of Hamiltonian quaternions. Let $\iota:B_\R \hookrightarrow \M_2(\R)^a$ be the embedding obtained by composing the above map with the projection onto the factor $\M_2(\R)^a \times \M_2(\C)^{b}$.

Given a maximal order $\calO$ of $B$, let $\Gamma_{\calO}$ be the image of $\calO^\times$ in $G_{a,b} = \PGL_2(\R)^a \times \PGL_2(\C)^b$ under $\iota$. Then $\Gamma_{\calO}$ is a lattice by Borel and Harish-Chandra \cite{borel-harish-chandra}, and the image of $\Gamma_{\calO}$ under projection onto any proper factor of $G_{a,b}$ is topologically dense.

In this paper we are interested in subgroups acting discretely on $\calH = \Htwo \times \Htwo$. Therefore, to construct fake quadrics we take $a = 2$. Accordingly, we suppose from now on that $k$ is totally real and $B$ is ramified at all but exactly two real places of $k$. In particular, $n = [k:\Q] \geq 2$.

A (discrete) subgroup $\Gamma \leq \Isom(\calH)$ commensurable with $\Gamma_{\calO}$ for some maximal order $\calO \subset B$ (in the narrow sense, with the intersection having finite index in each group) is called an \defi{arithmetic lattice} in $\Isom(\calH)$. An arithmetic lattice $\Gamma$ is \defi{reducible} if it contains a finite index subgroup $\Gamma^\prime \leq \Gamma$ such that $\Gamma^\prime \cong \Gamma_1 \times \Gamma_2$ is isomorphic as a group to a product of groups with each $\Gamma_i \leq \Isom(\Htwo)$ a Fuchsian group. This paper is concerned only with irreducible arithmetic lattices, and by the arithmeticity theorem of Margulis and the classification of $k$-forms of $\PGL_2$, all irreducible arithmetic lattices are indeed obtained from the above construction.

Let $\Isom^h(\calH)$ be the subgroup of $\Isom(\calH)$ consisting of holomorphic isometries, $\Gamma \leq \Isom(\calH)$ be an arithmetic lattice, and
\[
X = \Gamma \backslash \calH.
\]
Then $X$ has the structure of a complex orbifold of dimension $2$. By the Godement compactness criterion, $X$ is compact (equivalently, $\Gamma$ is \defi{uniform}) if and only if $B$ is a division algebra over $k$ (equivalently, $\Ram(B) \neq \emptyset$). We summarize the above discussion with the following lemma.

\begin{lem}
Commensurability classes of irreducible, cocompact arithmetic lattices in $\Isom(\calH)$ are determined by precisely those forms of $\PGL_2$ defined by quaternion division algebras $B$ over a totally real field $k$ such that $\Ram(B)$ contains all but exactly two real places of $F$. 
\end{lem}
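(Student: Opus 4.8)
The statement collects the preceding discussion into a clean parametrization, so the plan is to verify that the assignment $B \mapsto [\Gamma_\calO]$ is a well-defined bijection from isomorphism classes of quaternion division algebras $B$ over totally real fields $k$ with $\Ram_\infty(B)$ equal to all but exactly two real places, onto commensurability classes of irreducible cocompact arithmetic lattices in $\Isom(\calH)$. I would organize the argument into three steps: (1) the construction lands in the right place and produces such a lattice; (2) every such commensurability class arises; (3) distinct algebras give distinct classes.

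For (1), fix such a $B$ and a maximal order $\calO$. Since $k$ is totally real, $b = r_2 = 0$, and since $B$ splits at exactly two real places, $a = 2$; hence $G_{a,b} = \PGL_2(\R)^2 \le \Isom^h(\calH) \le \Isom(\calH)$, so $\Gamma_\calO$ is a subgroup of $\Isom(\calH)$. It is a lattice by Borel--Harish-Chandra, it is irreducible because the projection to each factor is dense (as recorded above, this rules out a finite-index product decomposition into Fuchsian factors), and it is cocompact by the Godement criterion precisely because $\Ram(B) \ne \emptyset$, which is exactly the hypothesis that $B$ is a division algebra.

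For (2), let $\Gamma \le \Isom(\calH)$ be irreducible, cocompact, and arithmetic. Since $\Isom(\calH) \cong \PGL_2(\R)^2 \rtimes \langle \text{swap} \rangle$, the factor-preserving subgroup $\Gamma_{st} = \Gamma \cap \PGL_2(\R)^2$ has index at most $2$, hence is commensurable with $\Gamma$ and is again an irreducible lattice, now inside the rank-two group $\PGL_2(\R)^2$. By the arithmeticity theorem of Margulis together with the classification of $k$-forms of $\PGL_2$ invoked in the paragraph preceding the statement, $\Gamma_{st}$ is commensurable with $\Gamma_\calO$ for a quaternion algebra $B$ over a number field $k$. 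Reading off the archimedean data then forces the parameters: the two $\PGL_2(\R)$ factors give $a = 2$, and the absence of any $\PGL_2(\C)$ factor gives $r_2 = 0$, so $k$ is totally real and $\Ram_\infty(B)$ consists of all but two real places; cocompactness of $\Gamma$ forces $B$ to be a division algebra by Godement. For (3), injectivity is the standard fact that the wide commensurability class of an arithmetic lattice determines its field of definition and its $\Qbar$-form, which for a form of $\PGL_2$ is exactly the isomorphism class of $B$ (equivalently, $k$ together with $\Ram(B)$), while any two maximal orders in a fixed $B$ yield commensurable $\Gamma_\calO$.

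The main obstacle is conceptual rather than computational: it lies in the passage between $\Isom(\calH)$ and the quaternionic model, specifically in handling the swap (and the antiholomorphic isometries already present inside $\PGL_2(\R)^2$) and in confirming that restricting to the finite-index stable subgroup $\Gamma_{st}$ preserves both irreducibility and the lattice property, so that Margulis' theorem applies cleanly without altering the commensurability class.
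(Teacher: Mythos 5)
Your write-up is correct and follows the same route as the paper, which offers no formal proof of this lemma but simply states it as a summary of the preceding discussion (Borel--Harish-Chandra for the lattice property, density of the factor projections for irreducibility, the Godement criterion equating cocompactness with $B$ being a division algebra, and Margulis arithmeticity plus the classification of forms of $\PGL_2$ for exhaustiveness). The one point where you go beyond the paper is step (3), and there the claim of a bijection with $k$-algebra isomorphism classes of $B$ is slightly too strong: if $\tau \in \Aut(k/\Q)$ carries $\Ram(B)$ to $\Ram(B')$ (in particular permuting the two split real places), then $B$ and $B'$ may be non-isomorphic over $k$ yet yield literally the same lattice in $\Isom(\calH)$ up to relabeling the factors, so the correct invariant is the algebra up to $\Q$-algebra isomorphism (equivalently, the pair $(k, \Ram(B))$ up to $\Aut(k/\Q)$); this is exactly the symmetry the paper later confronts in Theorem \ref{thm:galoisdescswap} and in the Appendix's accounting of the possible choices of split real places.
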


\subsection*{Stable subgroups}

Let $G=\Isom^+(\calH) \leq \Isom(\calH)$ denote the orientation-preserving isometry group of $\calH$. Then $\Isom^+(\calH)$ contains the \defi{swap map}
\begin{equation} \label{eqn:swapmap}
\begin{aligned}
\sigma : \calH &\to \calH \\
(z_1, z_2) &\mapsto (z_2, z_1).
\end{aligned}
\end{equation}
By the de Rham decomposition theorem \cite{deRham}, $G$ is generated by $\sigma$ and the \defi{stable subgroup}
\[
G^* = G \cap (\Isom(\Htwo) \times \Isom(\Htwo)).
\]
The group $\Isom(\Htwo) \cong \PGL_2(\R)$ is generated by $\PGL_2^+(\R)$ acting by linear fractional transformations and the orientation-reversing isometry $\rho(z)=-1/\overline{z}$ of negative determinant. Therefore
\[
G^* = \{ \gamma = (\gamma_1, \gamma_2) \in \PGL_2(\R) \times \PGL_2(\R) : \det(\gamma_1) \det(\gamma_2) > 0 \}.
\]
When $\gamma \in \PGL_2(\R) \times \PGL_2(\R)$ has $\det(\gamma_i) < 0$ for $i = 1, 2$, then the corresponding orientation-preserving isometry acts on each factor by an orientation-reversing isometry. Notice that such an isometry does not act holomorphically on $\calH$.

Finally, inside $G^*$ is the subgroup $G^+ \cong \PGL_2^+(\R) \times \PGL_2^+(\R)$ of index $2$. Therefore, we have inclusions
\[
\Isom^+(\calH) = G \overset{2}{>} G^* \overset{2}{>} G^+ \cong \PGL_2^+(\R) \times \PGL_2^+(\R).
\]
A discrete subgroup $\Gamma \leq G$ is called \defi{stable} if $\Gamma \leq G^*$. A subgroup $\Gamma$ of $G$ acts by holomorphic isometries if and only if it is contained in
\[
\Isom^h(\calH) = \left\langle \sigma, \PGL_2^+(\R) \times \PGL_2^+(\R) \right\rangle,
\]
and $\Isom^h(\calH) \cap G^* = G^+$.

Critical for enumerating all lattices in $\Isom^h(\calH)$ of covolume $16 \pi^2$ is the following, which characterizes those lattices not contained in $G^*$. See also Gr\r{a}nath \cite[Section 4.3]{granath2002quaternionic} and \Dzambic--Roulleau \cite[Proposition 6]{dzambicroulleau:inv}.

\begin{thm} \label{thm:galoisdescswap}
Let $\Gamma_{st} \leq G^*$ be an arithmetic lattice associated with a quaternion algebra $B$ over $k$, and let 
$\iota : B \hookrightarrow \M_2(\R)^2$ be an algebra embedding. Then
\[
N_G(\Gamma_{st}) \cap G^* \leq \iota(B^\times/k^\times).
\]
Moreover, there exists a lattice $\Gamma \leq G$ properly containing $\Gamma_{st}$ such that $\Gamma \not\leq G^*$ if and only if there exists $\tau \in \Aut(k / \Q)$ such that
\begin{enumerate}

\item[\textup{(i)}] $\tau(\Ram(B)) = \Ram(B)$ and $\tau$ interchanges the two real places of $k$ that split $B$ and

\item[\textup{(ii)}] $\tau(\Gamma_{st})$ is conjugate to $\Gamma_{st}$ under the induced action of $\iota(B^\times)$.

\end{enumerate}
\end{thm}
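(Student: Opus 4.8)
The plan is to prove the normalizer bound first and then use it as the engine driving the equivalence. For the inclusion $N_G(\Gamma_{st}) \cap G^* \leq \iota(B^\times/k^\times)$, I would observe that $N_G(\Gamma_{st}) \cap G^* = N_{G^*}(\Gamma_{st})$ consists of elements $g \in G^*$ with $g\Gamma_{st}g^{-1} = \Gamma_{st}$, which in particular \emph{commensurate} $\Gamma_{st}$. Since $\Gamma_{st}$ is an irreducible arithmetic lattice attached to $B$ over $k$ through $\iota$, Margulis's commensurability criterion identifies $\mathrm{Comm}_{\PGL_2(\R)^2}(\Gamma_{st})$ with $\iota(B^\times/k^\times)$, the image of the $k$-points of the adjoint group $B^\times/k^\times$. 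Intersecting with $G^*$ yields the claim; this step is routine once the commensurator theorem is invoked.

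For the equivalence I would first fix the dictionary between geometry and arithmetic: let $v_1,v_2$ be the two real places splitting $B$, so that $\iota = (\iota_1,\iota_2)$ with $\iota_j$ the embedding attached to $v_j$, and note that conjugation by the swap map $\sigma$ interchanges the two factors of $\PGL_2(\R)^2$ and hence interchanges $\iota_1$ and $\iota_2$ on $\iota(B^\times/k^\times)$. Assume a lattice $\Gamma \supsetneq \Gamma_{st}$ with $\Gamma \not\leq G^*$ exists. Then $\Gamma_{st} = \Gamma \cap G^*$ is normal of index $2$, so I may pick $\delta \in \Gamma \setminus G^*$ and write $\delta = \sigma g$ with $g \in G^*$. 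Since $\delta$ normalizes both $\PGL_2(\R)^2$ and $\Gamma_{st}$, conjugation $c_\delta$ preserves the commensurator $\iota(B^\times/k^\times)$ and so induces an automorphism of the adjoint group $B^\times/k^\times$. By the structure of automorphisms of this group, concretely via Skolem--Noether together with Galois descent, $c_\delta$ is the product of an inner automorphism and the automorphism induced by a $\tau$-semilinear ring automorphism $\phi$ of $B$, for some $\tau \in \Aut(k/\Q)$; the presence of $\sigma$ forces $\tau$ to interchange $v_1$ and $v_2$. Existence of $\phi$ means $B^\tau \cong B$, that is $\tau(\Ram(B)) = \Ram(B)$, which is (i); and the fact that $c_\delta$ fixes $\Gamma_{st}$, after absorbing the inner part contributed by $g$, says exactly that $\tau(\Gamma_{st})$ is $\iota(B^\times)$-conjugate to $\Gamma_{st}$, which is (ii).

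Conversely, given $\tau$ satisfying (i) and (ii), I would reverse this construction. Condition (i) guarantees $B^\tau \cong B$, hence a $\tau$-semilinear automorphism $\phi\colon B \to B$; since $\tau$ interchanges $v_1$ and $v_2$, an application of Skolem--Noether over $\R$ realizes $\phi$ by an isometry $\delta_0 \in G \setminus G^*$ that maps to $\sigma$ modulo $G^*$ and normalizes $\iota(B^\times/k^\times)$. Condition (ii) then supplies $\beta \in B^\times$ for which $\delta = \iota(\beta)\,\delta_0$ normalizes $\Gamma_{st}$ itself, and I would set $\Gamma = \langle \Gamma_{st}, \delta\rangle$. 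Since $\delta \notin G^*$, this $\Gamma$ properly contains $\Gamma_{st}$ and is not contained in $G^*$; it is discrete because it lies in $N_G(\Gamma_{st})$, which is discrete as $\Gamma_{st}$ is an irreducible lattice in a group with trivial center and no compact factors, and it is a lattice because it contains $\Gamma_{st}$ with finite index.

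I expect the crux to be the middle step converting the geometric factor-swap into the arithmetic field automorphism: using Skolem--Noether and descent to extract a $\tau$-semilinear automorphism of $B$ from $c_\delta$, and tracking the inner-conjugation ambiguity (the factor $g$, respectively $\beta$) carefully enough that conditions (i) and (ii) emerge precisely and no information is lost in either direction. This rests on the rigidity furnished by the commensurator theorem, which is what guarantees that the automorphism induced by $\delta$ is of arithmetic type rather than an exotic abstract automorphism of the lattice.
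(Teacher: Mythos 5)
Your proposal is correct and follows essentially the same route as the paper: the normalizer bound via the commensurator theorem (the paper cites Borel rather than Margulis, but it is the same fact), and the equivalence via extracting from an element $\gamma = g\sigma \in \Gamma \setminus G^*$ a $\Q$-linear but not $k$-linear automorphism of $B$, applying Skolem--Noether to absorb the inner part, and reading off conditions (i) and (ii); the converse likewise builds $\rho = \alpha^{-1}\sigma$ and uses that $N_G(\Gamma_{st})$ is a lattice. The only difference is one of detail: where you gesture at "rigidity from the commensurator theorem" to justify that conjugation induces a genuine ring automorphism of $B$, the paper makes this precise by passing to the norm-one lift $\Delta_{st}^1 \le \SL_2(\R)\times\SL_2(\R)$ and using Selberg's lemma to split the central extension before invoking Skolem--Noether.
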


\begin{proof}
Suppose that $\alpha \in G^*$ normalizes $\Gamma_{st}$. One way to prove the first statement is to note that $\alpha$ lies in the commensurator of $\Gamma_{st} \cap G^*$ in $G^*$ then apply a result of Borel \cite[Thm.\ 3(b)]{BorelMaximality} which implies that the commensurator in $G^*$ of any arithmetic subgroup of $G^*$ commensurable with $\Gamma_{st} \cap G^*$ equals $\iota(B^\times)$.

We give another proof, which is useful for the proof of the second part. Consider
\[
\Gamma_{st}^1 = \Gamma_{st} \cap \iota(B^1) \le \PSL_2(\R) \times \PSL_2(\R),
\]
where $B^1 \leq B^\times$ is the subgroup of units with reduced norm $1$. Then
\[
\iota(B^1) \cong B^1 / \{\pm 1\} \hookrightarrow G.
\]
Let $\Delta_{st}^1$ be the lift of $\Gamma_{st}^1$ to $B^1 \le \SL_2(\R) \times \SL_2(\R)$. We have a central exact sequence
\[
1 \to \Z / 2 \Z \to \Delta_{st}^1 \to \Gamma_{st}^1 \to 1.
\]
We claim that, replacing $\Gamma_{st}$ with a subgroup of finite index, we can assume that $\Delta_{st}^1 \cong \Gamma_{st}^1 \times \Z / 2 \Z$. By Selberg's lemma, $\Delta_{st}^1$ has a torsion-free finite-index subgroup $\Lambda_{st}^1$, which then projects isomorphically onto its image in $\Gamma_{st}^1$. Replacing $\Gamma_{st}^1$ with $\Lambda_{st}^1$, we see that the above central exact sequence admits a section that is a homomorphism, and the claim follows. Replacing $\Gamma_{st}^1$ with a further subgroup of finite index, we can assume that it is normalized by our element $\alpha \in G^*$.

The map $\iota:B \hookrightarrow \M_2(\R)^2$ is an embedding, so conjugation by $\alpha$ defines a $k$-algebra automorphism $\phi_\alpha \in \Aut_k(B)$. By the Skolem--Noether theorem, there exists $\beta \in B^\times$ such that $\phi_\alpha(x) = \beta x \beta^{-1}$ for all $x \in B$. Thus $\iota(\beta) \alpha^{-1}$ centralizes $\Delta_{st}^1$, and $G^*$ has no center, so $\alpha \in \iota(B^\times)$ as claimed. 

For the second statement, choose $\gamma \in \Gamma$ not in $G^*$. Recall that $\gamma$ is of the form $g \sigma$ for some $g \in G^*$ and $\sigma$ the swap map. Passing to a group $\Delta_{st}^1$ as above, we see that $\gamma$ defines an automorphism $\phi_\gamma$ of $B$ that is $\Q$-linear but not $k$-linear. More specifically, $\phi_\gamma$ is the composition of an element $\tau$ of $\Aut(k / \Q)$ that swaps the two real places of $k$ that split $B$ and a $k$-automorphism of $B$. Again, Skolem--Noether implies that there is some $\beta \in B^\times$ such that $\phi_\gamma(x) = \beta \tau(x) \beta^{-1}$ for all $x \in B$. Since the action of $\tau$ on $B^\times$ projects to the swap map $\sigma$, we see in the same way as above that $\tau(\Gamma_{st})$ is conjugate to $\Gamma_{st}$ by an element of $\iota(B^\times)$. Furthermore, this implies that $B \otimes_k k_{\tau(v)}$ is isomorphic to $B \otimes_k k_v$ for every place $v$ of $k$, so $\tau(\Ram(B)) = \Ram(B)$. This proves that (i) and (ii) hold.

Conversely, if conditions (i) and (ii) hold with $\tau(\Gamma_{st}) = \alpha \Gamma_{st} \alpha^{-1}$, define $\rho = \alpha^{-1} \sigma$. By (i), conjugation by $\sigma$ is the same as the action of $\tau$, and it follows that $\rho$ normalizes $\Gamma_{st}$. In particular, the subgroup of $G$ generated by $\rho$ and $\Gamma$ is a lattice, since the normalizer of $\Gamma$ in $G$ is a lattice, and hence is a finite extension of $\Gamma_{st}$.
\end{proof}


\begin{example}
In Theorem \ref{thm:galoisdescswap}, if $\tau(\Gamma_{st})=\Gamma_{st}$, then we obtain $\Gamma$ by adjoining the swapping map $\sigma$ itself. 
\end{example}

\begin{rmk}
It is possible to find a group where $\tau(\Gamma_{st}) \neq \Gamma_{st}$ as follows. Assume for simplicity that $k$ is Galois over $\Q$. Take an Eichler order $\calO$ of square-free level $\frakN$ such that $\tau(\calO)$ is isomorphic but not equal to $\calO$. For example, take an Eichler order $\calO$ of level $\frakN = \tau(\frakN)$ with $\tau \in \Gal(k / \Q)$ interchanging the two real places and such that $\tau(\calO) = \calO$. Such a Galois-invariant order always exists. Now let $\calO^\prime = \mu \calO \mu^{-1}$, where $\tau(\mu) \mu^{-1} \not\in N_{B^\times}(\calO)$. Then $\tau(\calO^\prime) = \tau(\mu) \calO \tau(\mu)^{-1}$ is conjugate to $\calO^\prime$ under $\alpha = \tau(\mu) \mu^{-1}$ but $\tau(\calO^\prime) \neq \calO^\prime$. Therefore, the above construction applies. It would be interesting to understand a necessary and sufficient condition for this to be a torsion-free extension of $\Gamma_{st}$.
\end{rmk}

\subsection*{Maximal arithmetic lattices and torsion}

Let $\Gamma \leq G$ be a lattice. Then $\Gamma$ is contained in a maximal lattice \cite{BorelMaximality}, and our strategy is based upon enumerating maximal arithmetic lattices with bounded covolume, then finding which of these lattices contain a subgroup isomorphic to the fundamental group of a fake quadric. Therefore, we need to understand the maximal lattices in $G^*$, which are described as follows.

Suppose that the commensurability class of arithmetic lattices associated with the totally real field $k$ and $k$-quaternion algebra $B$ produces a fake quadric $X = \Gamma \backslash \calH$. Let $\Gamma_{st} = \Gamma \cap G^*$ be the stable subgroup of $\Gamma$. Then $\Gamma_{st}$ is contained in one of the maximal arithmetic lattices $\Gamma_{S, \calO}$ defined via Bruhat--Tits theory \cite{borel-commensurability} (see also Chinburg--Friedman \cite[\S 4]{Chinburg-Friedman-Selectivity} and Maclachlan--Reid \cite[\S 11.4]{Maclachlan--Reid}), where $S$ is a finite set of places of $k$, none of which ramify $B$. We briefly recall that $\Gamma_{S, \calO}$ is the image in $G^*$ of the group $N(\calE_S)$, where $\calE_S$ is the Eichler order of $B$ determined by $\calO$ and $S$ and $N(\calE_S)$ is the normalizer of $\calE_S$ in $B^\times$ (unless $S = \emptyset$, in which case $\Gamma_{S, \calO} = \Gamma_{\calO}$). Then $\Gamma_{S, \calO}$ is commensurable with $\Gamma_{\calO}$, and all maximal lattices in $G^*$ commensurable with $\Gamma_{\calO}$ come from this construction. We emphasize that $\Gamma_{S, \calO}$ is not uniquely determined by $S$.

Furthermore, the quotient $X = \Gamma \backslash \calH$ is smooth if and only if $\Gamma$ is torsion-free. Consequently, we need to understand torsion elements in $\Gamma$ in terms of arithmetic invariants of $B$. This comes from the following. For $s \in \Z_{\geq 1}$, we let $\zeta_s$ be a primitive root of unity of order $s$ (in an algebraic closure $\overline{k}$ of $k$).

Before stating our next result we require some additional terminology. Let $B$ be a $k$-quaternion algebra, $L$ a quadratic field extension of $k$ which embeds into $B$ and $\Omega\subset L$ a quadratic $\calO_k$-order. We say that $\Omega$ is \defi{selective} (with respect to $B$) if $\Omega$ does not embed into all maximal orders of $B$.
\begin{proposition}\label{prop:torsiontheorem}
Let $k$ be a totally real field and let $B$ be a quaternion division algebra over $k$ ramified at all but exactly two real places of $k$. Let $\calO \subset B$ be a maximal order and set $\Gamma_{\calO} = \iota(\calO^\times) \leq G$. Then the following are equivalent:
\begin{enumerate}

\item[\textup{(i)}] $\Gamma_{\calO}$ contains an element of order $m \geq 2$;

\item[\textup{(ii)}] $k(\zeta_{2m})$ embeds in $B$; 

\item[\textup{(iii)}] $k(\zeta_{2m}) / k$ is a quadratic extension in which no prime in $\Ram(B)$ splits.

\end{enumerate}
\end{proposition}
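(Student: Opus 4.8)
I would organize the proof around the clean local equivalence (ii)$\Leftrightarrow$(iii) and the two global implications (ii)$\Rightarrow$(i) and (i)$\Rightarrow$(ii). For (ii)$\Leftrightarrow$(iii) the tool is the embedding theorem for quadratic fields into quaternion algebras \cite{vigneras1980, Maclachlan--Reid}: a quadratic field extension $L/k$ admits a $k$-embedding into $B$ if and only if no place $v\in\Ram(B)$ splits in $L$, i.e. $L\otimes_k k_v$ is a field for all $v\in\Ram(B)$. Taking $L=k(\zeta_{2m})$, I would first record that since $k$ is totally real its only roots of unity are $\pm 1$, so for $m\geq 2$ we have $\zeta_{2m}\notin k$ and $k(\zeta_{2m})$ is a CM field, totally imaginary over the totally real $k$. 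Hence $k(\zeta_{2m})\otimes_k k_v\cong\C$ at every real place, so the real places of $\Ram(B)$ automatically fail to split and the content of (iii) lives entirely at the finite ramified primes. Together with the fact that only fields of degree at most $2$ over $k$ can sit inside $B$, this makes (ii) and (iii) two formulations of the same condition.

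For (ii)$\Rightarrow$(i) I would take an embedding $k(\zeta_{2m})\hookrightarrow B$ and compute that the class of $\zeta_{2m}$ in $B^\times/k^\times$ has order exactly $m$: if $\zeta_{2m}^{\,j}\in k^\times$ then $\zeta_{2m}^{\,j}=\pm 1$, forcing $m\mid j$. Because quaternionic conjugation restricts to complex conjugation on the CM field $k(\zeta_{2m})$, we have $\nrd(\zeta_{2m})=\zeta_{2m}\overline{\zeta_{2m}}=1$ and $\trd(\zeta_{2m})\in k$, so $\zeta_{2m}$ is an integral unit and lies in $\calO'^\times$ for some maximal order $\calO'$, contributing an element of order $m$ to $\Gamma_{\calO'}$. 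The point requiring care is to realize this torsion in the prescribed $\calO$ rather than merely in $\calO'$, which is exactly the question of whether $\calO_k[\zeta_{2m}]$ is selective with respect to $B$. Here I would use that $B$ is split at two real places, hence indefinite and satisfies the Eichler condition, so by strong approximation for $B^1$ there is no selectivity \cite{Chinburg-Friedman-Selectivity, Maclachlan--Reid}; thus $\calO_k[\zeta_{2m}]$ embeds into every maximal order, in particular into $\calO$, producing the desired element of $\Gamma_\calO$.

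For (i)$\Rightarrow$(ii) I would lift an element of order $m\geq 2$ to $u\in\calO^\times$ whose class in $B^\times/k^\times$ has order $m$. Then $u\notin k$, so $L=k(u)$ is a quadratic subfield of $B$; since $u$ is elliptic at both split real places while $B_v\cong\HH$ forces $L\otimes_k k_v\cong\C$ at the remaining real places, $L$ is a CM field. I would then read off the eigenvalues of $\iota(u)$ at a split real place — they have the form $\rho\,\zeta_{2m}^{\pm s}$ with $\rho=\sqrt{\nrd(u)}$ and $s$ fixed by the rotation angle — and use the minimal polynomial of $u$ over $k$ to locate a primitive $2m$-th root of unity in $B$, giving $k(\zeta_{2m})\hookrightarrow B$. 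The delicate point is that the most direct construction, $u/\overline{u}=u^2/\nrd(u)$, yields only a primitive $m$-th root of unity; recovering the full $2m$-th root when $m$ is even is where the distinction between the $\PSL_2$ and $\PGL_2$ pictures enters (the scalar $\rho$ need not lie in $k$), and it must be navigated with care because $k(\zeta_m)$ and $k(\zeta_{2m})$ genuinely differ for even $m$.

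I expect the even-$m$ case of (i)$\Rightarrow$(ii) to be the real obstacle. The embedding theorem dispatches (ii)$\Leftrightarrow$(iii) formally, and the Eichler condition removes selectivity in (ii)$\Rightarrow$(i), but pinning down that an order-$m$ rotation coming from the full unit group $\calO^\times$ forces a primitive $2m$-th (rather than merely $m$-th) root of unity into $B$ is precisely the place where the normalization of $u$ must be tracked carefully, and it is the step I would devote the most attention to.
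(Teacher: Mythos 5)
Your treatment of (ii)$\Leftrightarrow$(iii) and of (ii)$\Rightarrow$(i) matches the paper's proof in substance: the paper likewise dispatches (ii)$\Leftrightarrow$(iii) as a standard local-global principle and reduces (ii)$\Rightarrow$(i) to ruling out selectivity of $\calO_k[\zeta_{2m}]$ via Chinburg--Friedman. One correction: your stated reason for non-selectivity --- that $B$ is indefinite, hence satisfies the Eichler condition, hence strong approximation for $B^1$ removes selectivity --- is not valid, since selectivity is a phenomenon that occurs precisely for algebras satisfying the Eichler condition. The correct reason, and the one the paper gives, is that $k(\zeta_{2m})/k$ is totally imaginary while the class field governing selectivity is unramified at the two split real places of $B$, so $k(\zeta_{2m})$ cannot lie in it. You have the needed ingredient (you note $k(\zeta_{2m})$ is CM) but invoke the wrong principle to finish.

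On (i)$\Rightarrow$(ii): the paper's proof is silent on this direction entirely, so you have gone further than the paper, and the difficulty you isolate is not merely the hard step but an actual obstruction. For $u\in\calO^\times$ of projective order $m$ one only knows $u^m\in\calO_k^\times$, and $u/\overline{u}$ is a root of unity of order dividing $m$, so one obtains at best $k(\zeta_m)\hookrightarrow B$; there is no way in general to manufacture $\zeta_{2m}$ when $m$ is even. Concretely, for $m=2$: take $k=\Q(\sqrt{3})$ and $\epsilon=2+\sqrt{3}$, a totally positive non-square unit, so that $k(\sqrt{-\epsilon})=k(\sqrt{-2})\neq k(\zeta_4)$; choosing $\Ram(B)$ to consist of two finite primes inert in $k(\sqrt{-2})/k$, one of which splits in $k(i)/k$, the element $u=\sqrt{-\epsilon}$ lies in $\calO^\times$ (it has unit reduced norm, and no selectivity occurs) and has order $2$ in $\Gamma_{\calO}$, yet $k(\zeta_4)$ does not embed in $B$. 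So (i)$\Rightarrow$(ii) fails for even $m$ as stated; the clean equivalence with $k(\zeta_{2m})$ is the one for the norm-one group $\calO^1$, where $u^m=\pm 1$ forces $u$ itself to be a $2m$-th root of unity. Your instinct to treat the normalization of $u$ in the even case as the crux was exactly right: the step cannot be completed, and the paper neither completes it nor acknowledges it. The directions actually used later in the paper --- (iii)$\Rightarrow$(ii)$\Rightarrow$(i), e.g.\ for $m=2,3$ in Corollary \ref{corollary:23torsion} --- are unaffected, and both you and the paper prove those correctly.
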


\begin{proof}
The equivalence of (ii) and (iii) is a standard local-global principle for $B$. To ensure that in fact we can take $\zeta_{2m} \in \calO^\times$, we need only rule out the possibility that $\calO_k[\zeta_{2m}]$ is selective with respect to $B$, and for this we appeal to a result of Chinburg--Friedman \cite[Theorem 3.3]{Chinburg-Friedman-Selectivity}. The extension $k(\zeta_{2m})/k$ is a totally imaginary quadratic extension of a totally real number field, and $B$ is unramified at a real place (in fact, precisely two). This rules out the possibility of selectivity and concludes our proof.
\end{proof}

We then have the following immediate consequence.

\begin{corollary}\label{corollary:23torsion}
Under the hypotheses of Proposition \ref{prop:torsiontheorem}, suppose further that $\Ram(B)$ contains no finite prime of $k$. Then $\Gamma_{\mc{O}}$ contains elements of orders $2$ and $3$.
\end{corollary}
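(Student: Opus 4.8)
The plan is to apply Proposition \ref{prop:torsiontheorem} twice, once with $m = 2$ and once with $m = 3$, and in each case verify criterion (iii). The entire content reduces to checking that the relevant cyclotomic extension of $k$ is quadratic and that no prime of $\Ram(B)$ splits in it.

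First I would identify the two fields in play. For $m = 2$ we have $\zeta_{2m} = \zeta_4 = i$, so the relevant extension is $k(i)/k$. For $m = 3$ we have $\zeta_{2m} = \zeta_6$, and since $\Q(\zeta_6) = \Q(\zeta_3)$ the relevant extension is $k(\zeta_3)/k$. Because $k$ is totally real, neither $i$ nor $\zeta_3$ lies in $k$, so both $k(i)/k$ and $k(\zeta_3)/k$ are genuine quadratic extensions; in fact each is a totally imaginary (CM) quadratic extension of the totally real field $k$.

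Next I would check the splitting condition. By hypothesis $\Ram(B)$ contains no finite prime, so $\Ram(B) = \Ram_\infty(B)$ consists entirely of real places of $k$, namely all but the two that split $B$. For a real place $v$, splitting in a quadratic extension $L/k$ means $L \otimes_k k_v \cong \R \times \R$; but adjoining a non-real root of unity to the totally real field $k$ sends every real place to a complex one, so $k(i) \otimes_k k_v \cong \C$ and likewise for $k(\zeta_3) \otimes_k k_v \cong \C$. Hence no real place of $k$ splits in either extension, and in particular no prime of $\Ram(B)$ splits. This is precisely criterion (iii) of Proposition \ref{prop:torsiontheorem} for $m = 2$ and for $m = 3$, so the implication (iii) $\Rightarrow$ (i) produces the desired elements of orders $2$ and $3$ in $\Gamma_{\calO}$.

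I do not expect any real obstacle here: the argument is forced once one observes that the CM nature of $k(i)$ and $k(\zeta_3)$ makes every infinite place non-split, while the hypothesis on $\Ram(B)$ removes all finite primes from consideration. The only points requiring a moment's care are the identity $\Q(\zeta_6) = \Q(\zeta_3)$ and the use of total reality of $k$ to guarantee that both extensions are proper of degree $2$.
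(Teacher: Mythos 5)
Your argument is correct and is exactly the verification the paper has in mind: the corollary is stated there as an immediate consequence of Proposition \ref{prop:torsiontheorem} with no written proof, and the intended check is precisely that $k(\zeta_4)=k(i)$ and $k(\zeta_6)=k(\zeta_3)$ are quadratic (totally imaginary) over the totally real field $k$, while the hypothesis $\Ram_f(B)=\emptyset$ leaves only real places in $\Ram(B)$, none of which can split in a CM extension. Nothing is missing.
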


Determining when an arbitrary lattice $\Gamma$ contains an element of finite order is considerably more nuanced.  We opt for the following necessary condition.  

\begin{lemma} \label{lem:embedtors}
The following statements hold.
\begin{enumerate}
\item[\textup{(a)}] The group $\calE_S^\times/\{\pm 1\}$ has an element of order $m \geq 2$ if and only if there exists an $\calO_k$-embedding $\calO_k[\zeta_{2m}] \hookrightarrow \calE_S$.
\item[\textup{(b)}] Let $T \subseteq L=k(\zeta_{2m})$ be an $\calO_k$-order of discriminant $\frakd_T \subseteq \calO_k$.  Then there exists \emph{no} $\calO_k$-embedding $T \hookrightarrow \calE_S$ if and only if one of the following two conditions hold:
\begin{enumerate}
\item[\textup{(i)}] There exists a prime $\frakp \in \Ram(B)$ that splits in $L$; or
\item[\textup{(ii)}] There exists a prime $\frakq \in S$ such that $\frakq \nmid \frakd_T$ and $\frakq$ is inert in $L$.
\end{enumerate}
\end{enumerate}
\end{lemma}

\begin{proof}
Part (a) is immediate.  For part (b), ruling out selectivity as in the proof of Proposition \ref{prop:torsiontheorem}, we need only check that there are local embeddings: for results in a general setup, see Voight \cite[Prop.\ 30.5.3, Lem.\ 30.6.16, Exer.\ 30.7]{voight:quatbook}.
\end{proof}

For stable maximal arithmetic lattices, there is a very precise (but technical) result due to Chinburg--Friedman \cite[Thm.\ 3.3, Thm.\ 3.6]{chinburg-friedman-finitesubgroups} indicating when $\Gamma_{S,\calO}$ contains an element of finite order.  Given the scope of our computation, we opt for the simpler check for torsion in $\calE_S^{\times}/\{\pm 1\} \leq \Gamma_{S,\calO}$ provided by Lemma \ref{lem:embedtors}.

\section{Generalizing a volume inequality of Chinburg and Friedman}\label{ssec:chinburgfriedmanlemma43}

In this section, we revisit an inequality due to Chinburg--Friedman \cite[Lemma 4.3]{chinburg-smallestorbifold}, which provides a lower bound for the volume of an orbifold in the commensurability class determined by a quaternion algebra $B$ over a number field $k$ in terms of the arithmetic invariants of $B$. We expect that our improved bounds will be of use to those working on a wide variety of problems related to the volumes of arithmetic groups derived from quaternion algebras and therefore we prove a slightly more general result than is needed for the present paper.

For ease of comparing our results to those of Chinburg--Friedman, we recall their notation. Let $a, b \geq 0$ be non-negative integers with $a + b \geq 1$ and define $\calH_{a, b} = (\Htwo)^a \times (\mathbf{H}^3)^b$. The group $G_{a, b}=\PGL_2(\R)^a \times \PGL_2(\C)^b$ is the subgroup of the isometry group of $\calH_{a, b}$ preserving the factors and acting by orientation-preserving isometries on the $\mathbf{H}^3$ factors. Equip $\Htwo,\Hthree$ with the metric of constant curvature $-1$. Given an irreducible subgroup $\Gamma$ of $G_{a,b}$ with finite covolume, we let $\mu(X)$ denote the volume of the quotient orbifold $X = \Gamma \backslash \calH_{a,b}$.

We retain the notation from section 2, letting $k$ be a number field of degree $n=[k:\Q]$ with signature $(r_1,r_2)$ and $r_2=b$ and $a \leq r_1$. Let $B$ be a quaternion algebra defined over $k$ that is unramified at precisely $a$ real places of $k$. If $\calO$ is a maximal order of $B$ and $\Gamma_{\calO}$ the irreducible subgroup of $G_{a, b}$ defined by the image of $\calO^\times$ in $G_{a, b}$, we let $\scrC(k,B)$ denote the set of all subgroups of $G_{a,b}$ that are commensurable with $\Gamma_{\calO}$.

Recall that $\Ram(B)=\Ram_f(B) \cup \Ram_\infty(B)$, where $\Ram_f(B)$ is the subset of finite places of $k$ and $\Ram_\infty(B)$ the subset of archimedean places of $k$ ramifying in $B$. Let
\[
\omega_2(B) = \# \{ \frakp \in \Ram_f(B) : \N(\frakp) = 2 \}
\]
and $h(k,2,B)$ be the degree over $k$ of the maximal unramified elementary $2$-abelian Galois extension of $k$ in which all primes in $\Ram_f(B)$ split completely. Define
\begin{equation}\label{equation:mukbdef}
\mu(k,B) = \frac{2 d^{\frac{3}{2}} \zeta_k(2)}{2^{2 n + \omega_2(B)} \pi^{r_1 + n} [K(B) : k]},
\end{equation}
where $K(B)$ is the maximal $2$-elementary Galois extension of $k$ that is unramified outside $\Ram_\infty(B)$ in which all primes in $\Ram_f(B)$ split completely. We can replace $[K(B) : k]^{-1}$ with 
\begin{equation}\label{eq:ourfactor}
\frac{[\calO_k^\times : \calO_{k, +}^\times]}{2^{r_1} h(k, 2, B)},
\end{equation}
which is an integer multiple of $[K(B) : k]^{-1}$. We note that $[K(B):k]$ is the \defi{type number} of $B$; that is, the number of conjugacy classes of maximal orders of $B$ \cite[pp. 37, 39]{Chinburg-Friedman-Selectivity}.

The following result is due to Borel \cite{borel-commensurability} (cf.\ Chinburg--Friedman \cite[Prop.\ 2.1]{chinburg-smallestorbifold}).

\begin{proposition}\label{proposition:mukbbound}
If $\Gamma \in \scrC(k,B)$ and $X = \Gamma \backslash \calH_{a,b}$, then
\[
\mu(X) \geq \mu(k,B) (2\pi)^a \prod_{\substack{\frakp \in \Ram_f(B)\\ \N(\frakp) \neq 2}} \frac{\N(\frakp) -1}{2}.
\]
\end{proposition}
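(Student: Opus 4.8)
The plan is to reduce the inequality to a single covolume computation for a maximal arithmetic lattice, supplied by Borel's volume formula, and then to check that the resulting minimum assembles into the displayed expression. First I would note that it suffices to prove the bound for maximal lattices: every $\Gamma \in \scrC(k,B)$ is contained in some maximal lattice $\Gamma' \supseteq \Gamma$, and since
\[
\mu(\Gamma \backslash \calH_{a,b}) = [\Gamma' : \Gamma]\,\mu(\Gamma' \backslash \calH_{a,b}) \geq \mu(\Gamma' \backslash \calH_{a,b}),
\]
the general case follows from the maximal one. By Borel \cite{borel-commensurability} (see also \cite{Chinburg-Friedman-Selectivity, Maclachlan--Reid}), every maximal lattice commensurable with $\Gamma_{\calO}$ is one of the groups $\Gamma_{S,\calO}$, the image of the normalizer $N(\calE_S)$ of an Eichler order $\calE_S$ attached to a maximal order $\calO$ and a finite set $S$ of places disjoint from $\Ram(B)$. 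Thus it remains to minimize $\mu(\Gamma_{S,\calO} \backslash \calH_{a,b})$ over all admissible pairs and to verify that the minimum equals the right-hand side.

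Second, I would begin from Borel's explicit formula for the covolume of the norm-one group $\Gamma_{\calO}^1$. Under the curvature $-1$ normalization in force here it is a product of $d^{3/2}\zeta_k(2)$, an archimedean constant which is a power of $\pi$ and of $2$ determined by $r_1$, $r_2$ and $a$, and the finite Euler-factor contribution $\prod_{\frakp \in \Ram_f(B)}(\N(\frakp)-1)$ at the ramified primes. The factor $(2\pi)^a$ in the statement is exactly the contribution of the $a$ split real (that is, $\Htwo$) factors under this normalization --- this is the new ingredient extending Chinburg--Friedman's purely three-dimensional computation \cite{chinburg-smallestorbifold} --- while the $\Hthree$ and the ramified real places are absorbed into the $\pi^{r_1+n}$ and $2^{2n}$ denominators of $\mu(k,B)$ in \eqref{equation:mukbdef}. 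Passing from $\Gamma_{\calO}^1$ up to $\Gamma_{S,\calO}$ introduces two index contributions: the passage from norm-one units to the full unit group and then to the normalizer of $\calO$ contributes a $2$-power governed by $[\calO_k^\times : \calO_{k,+}^\times]$, $2^{r_1}$ and $h(k,2,B)$, which is the content of the replacement of $[K(B):k]^{-1}$ by \eqref{eq:ourfactor}; the Eichler level $S$ contributes $\prod_{\frakp \in S}(\N(\frakp)+1)$, reduced by at most a further $2$-power from the Atkin--Lehner involutions in $N(\calE_S)$. Since $\N(\frakp)+1 \geq 3$ exceeds any such reduction, enlarging $S$ strictly increases the covolume, so the minimum is attained at $S = \emptyset$, namely at the normalizer of a maximal order.

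Third comes the bookkeeping that separates the primes of norm $2$. For $\frakp \in \Ram_f(B)$ with $\N(\frakp) = 2$ the local factor $\tfrac{\N(\frakp)-1}{2}$ equals $\tfrac{1}{2}$, and collecting these $\omega_2(B)$ copies of $\tfrac{1}{2}$ produces precisely the $2^{-\omega_2(B)}$ in the denominator of $\mu(k,B)$; the remaining ramified primes give the displayed product over $\{\frakp : \N(\frakp)\neq 2\}$. Combining Borel's covolume, the normalizer $2$-power, and these rewritten local factors, and minimizing over $S$ and over the finitely many conjugacy classes of maximal orders (all of which yield the same covolume), one recovers exactly $\mu(k,B)(2\pi)^a \prod_{\frakp \in \Ram_f(B),\,\N(\frakp)\neq 2}\tfrac{\N(\frakp)-1}{2}$.

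I expect the main obstacle to be the careful accounting of the $2$-power factors. One must reconcile the orders of the normalizer of $\calE_S$, the totally positive units, the genus-theoretic quantity $h(k,2,B)$ and the type number $[K(B):k]$ so that the identity making \eqref{eq:ourfactor} an integer multiple of $[K(B):k]^{-1}$ holds, and so that the minimum over all maximal lattices is genuinely attained rather than merely bounded. Confirming that no enlargement of $S$ and no change of order can beat the $S=\emptyset$ normalizer bound --- which rests on the monotonicity $\N(\frakp)+1 > 2$ of the local index factors together with Borel's classification of the maximal lattices --- is the crux of the argument.
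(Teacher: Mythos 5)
Your proposal is correct and is exactly the standard derivation that the paper's cited sources carry out: the paper itself gives no proof, attributing the statement to Borel \cite{borel-commensurability} (cf.\ Chinburg--Friedman \cite[Prop.\ 2.1]{chinburg-smallestorbifold}), and the content is precisely your reduction to the maximal lattices $\Gamma_{S,\calO}$, Borel's covolume formula \eqref{equation:volumeformulaequality}, the observation that each prime in $S$ contributes $2^{-1}(\N(\frakp)+1)\geq 3/2$ so the minimum occurs at $S=\emptyset$, and the absorption of the $\omega_2(B)$ local factors $\tfrac{1}{2}$ into the $2^{-\omega_2(B)}$ in $\mu(k,B)$. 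The only thing left implicit in your sketch is the explicit verification of the archimedean normalization (that the $a$ split real places contribute $(2\pi)^a$ while the remaining places produce the $2^{2n}\pi^{r_1+n}=(4\pi^2)^n$-type denominator when $r_1=n$), but this is a routine check against \eqref{equation:mukbdef} and \eqref{equation:volumeformulaequality}.
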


Chinburg--Friedman \cite[Lemma 4.3]{chinburg-smallestorbifold} show that
\[
\mu(X) > 0.69 \exp\left(0.37 n - \frac{19.08}{h(k,2,B)}\right),
\]
independent of $a,b$. The main result of this section is the following pair of lower bounds for $\mu(k,B)$. The first of the two bounds is most useful for ruling out fields of large degree, and is a strengthening of \cite[Lemma 4.3]{chinburg-smallestorbifold}. The second bound is useful for ruling out large $h(k,2,B)$ when $n$ is small, and has no analogue in previous work on volume bounds.

\begin{theorem}\label{theorem:lemma43generalization}
With notation as above, we have
\[
\mu(k,B) \geq \max_{(c_1, \dots, c_5)} c_1 \left( \frac{2}{w} \right) \Reg [\calO_k^\times : \calO_{k,+}^\times]\, \exp\left(c_2 n - c_3 r_1 + c_4 r_2 - \frac{c_5}{h(k,2,B)}\right),
\]
over
\[
(c_1, \dots, c_5) \in \left\{\begin{matrix}(1.785, 1.056, 1.139, 0.119, 19.075), \\ (2.116, 1.186, 1.080, 0.115, 6220.354) \end{matrix} \right\}.
\]
\end{theorem}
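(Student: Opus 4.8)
The plan is to reduce the stated estimate to a purely analytic lower bound for $d^{3/2}\zeta_k(2)/\kappa_k$, where $\kappa_k=\lim_{s\to1}(s-1)\zeta_k(s)$ is the residue of the Dedekind zeta function, and then to play an effective upper bound for $\kappa_k$ against an Odlyzko-style lower bound for the root discriminant $\delta=d^{1/n}$. First I would feed \eqref{eq:ourfactor} into \eqref{equation:mukbdef} to control the type number $[K(B):k]$, producing the starting inequality
\[
\mu(k,B)\;\ge\;\frac{2\,d^{3/2}\zeta_k(2)\,[\calO_k^\times:\calO_{k,+}^\times]}{2^{\,2n+\omega_2(B)+r_1}\,\pi^{\,r_1+n}\,h(k,2,B)}.
\]
The factors $\Reg$, $w$, $h$ on the right-hand side of the theorem are then introduced by the analytic class number formula $\kappa_k=2^{r_1}(2\pi)^{r_2}h\Reg/(w\,d^{1/2})$: substituting $d^{3/2}\zeta_k(2)=d\,\zeta_k(2)\cdot 2^{r_1}(2\pi)^{r_2}h\Reg/(w\kappa_k)$ extracts exactly the factor $\tfrac{2}{w}\Reg\,[\calO_k^\times:\calO_{k,+}^\times]$. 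After cancelling $\Reg/w$ and bounding $\zeta_k(2)\ge1$ together with $h/h(k,2,B)\ge 1$ (the latter since the field computing $h(k,2,B)$ is unramified over $k$, so its degree divides $h$), the claim reduces to an inequality of the shape
\[
\frac{d}{\kappa_k}\,2^{\,r_2-2n-\omega_2(B)}\,\pi^{\,-2r_1-r_2}\;\ge\;c_1\exp\!\left(c_2 n - c_3 r_1 + c_4 r_2 - \frac{c_5}{h(k,2,B)}\right),
\]
where the signature-dependent powers of $2$ and $\pi$ already supply part of the $-c_3r_1+c_4r_2$ terms and a $-2\log 2$ contribution to $c_2$, and the harmless factor $2^{-\omega_2(B)}$ is absorbed using $\omega_2(B)\le n$.

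The heart of the argument is the lower bound for $d/\kappa_k$. I would bound the residue from above by an effective estimate of Louboutin type, $\kappa_k\le\bigl(e\log d/(2(n-1))\bigr)^{n-1}$, and bound $\delta$ from below by one of Odlyzko's unconditional discriminant inequalities. Writing $L=\log\delta$, these combine to give $\log(d/\kappa_k)\ge nL-(n-1)\bigl(1+\log\tfrac{nL}{2(n-1)}\bigr)$, whose leading coefficient $L-\log L-1+\log 2$ is positive and increasing once $\delta$ clears the Odlyzko threshold, producing the exponential growth $\exp(c_2 n)$. The crucial source of the $-c_5/h(k,2,B)$ term is that Odlyzko's bounds strengthen with the degree: applying the inequality not to $k$ but to the everywhere-unramified extension $E/k$ of degree $h(k,2,B)$ that computes $h(k,2,B)$ — which has the same root discriminant $\delta_E=\delta$ but degree $n\,h(k,2,B)$ — sharpens the bound to $\log\delta\ge A - B/\bigl(n\,h(k,2,B)\bigr)$, so that the lower-order penalty is divided by $h(k,2,B)$ and, after multiplying by $n$, surfaces as $-c_5/h(k,2,B)$.

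Finally, Odlyzko's inequalities come in a one-parameter family indexed by the test function in the underlying explicit formula, with real and complex places entering at different weights; tracking this dependence pins down the $-c_3r_1+c_4r_2$ terms and yields, for each admissible test function, a valid tuple $(c_1,\dots,c_5)$. Evaluating at a gentle test function (modest asymptotic constant, hence small penalty $c_5=19.075$) and at an aggressive one (large asymptotic constant, hence the enormous penalty $c_5=6220.354$) produces the two listed tuples, and the $\max$ records whichever is stronger. I expect the main obstacle to be precisely this optimization: one must check that the residue bound, which carries a $\log d$, never cancels the discriminant growth — it does not, because the Odlyzko lower bound keeps $2\delta/(e\log\delta)>1$ — and must then choose the two test functions so that the resulting constants are simultaneously valid over all relevant $(n,r_1,r_2)$ and sharp enough to be useful in both the large-degree regime and the small-degree, large-$h(k,2,B)$ regime.
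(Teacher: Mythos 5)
Your opening reduction is fine and in fact parallels the paper's first step: substituting \eqref{eq:ourfactor} into \eqref{equation:mukbdef} and extracting $\tfrac{2}{w}\Reg\,[\calO_k^\times:\calO_{k,+}^\times]$ via the class number formula is algebraically equivalent to the paper's use of a Brauer--Siegel-type bound on $h\Reg$, and you have correctly identified the mechanism behind the $-c_5/h(k,2,B)$ term (running the discriminant bound through the unramified extension of degree $n\,h(k,2,B)$, which is exactly how the term $-(3-s)6\pi/(5\sqrt{y}\,n_K)$ with $n_K = n\,h(k,2,B)$ arises in the paper). The gap is in the analytic core. By discarding $\zeta_k(2)\ge 1$ and decoupling the problem into an upper bound for $\kappa_k$ (Louboutin) plus a lower bound for $\delta$ (Odlyzko), you lose far too much to recover the stated constants. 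Concretely, in the totally real case ($r_1=n$, $r_2=0$) your reduced inequality requires $n\log\delta-\log\kappa_k\ge(2\log 2+2\log\pi)n+(c_2-c_3)n+\cdots\approx 3.59\,n$; with $\log\kappa_k\le(n-1)\log\bigl(en\log\delta/(2(n-1))\bigr)\approx n(\log\log\delta+1-\log 2)$ this forces $\log\delta-\log\log\delta\gtrsim 3.9$, i.e.\ $\delta\gtrsim 280$ for \emph{every} totally real field, which is false: Odlyzko's unconditional bounds only give $\delta\gtrsim 60$ asymptotically, and totally real fields with $\delta$ near that threshold exist in arbitrarily large degree. What your route actually yields is roughly $\mu(k,B)\gtrsim\Reg\cdot e^{-1.3n}$ rather than $\Reg\cdot e^{-0.083n}$, which is not only weaker than the theorem but useless for its application, since Friedman's regulator bound only supplies growth $e^{0.738n}$.

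The paper avoids this by never separating the discriminant from the Euler product. It bounds $h(k,2,B)\le h$ by the Minkowski/Brauer--Siegel inequality at a parameter $s>1$ (in the totally real case this amounts to $\log\kappa_k\le\tfrac{s-1}{2}n\log\delta+n\log(\Gamma(s/2)\pi^{-s/2})+\log\zeta_k(s)+O(1)$, which at $s=1.4$ and $\delta\approx 60$ is about $0.28\,n+\log\zeta_k(s)$, versus Louboutin's $1.72\,n$), and then bounds the remaining combination $d^{(3-s)/2}\zeta_k(2)/\zeta_k(s)$ by a Chinburg--Friedman explicit-formula argument in which the Euler factor at each rational prime is weighed against that prime's contribution to the lower bound for $\log d_K$: a field with many small primes has large $\zeta_k(s)$ but, by the same token, a provably larger discriminant, and the net loss per prime (the quantities $\hat{j}(s,y,p)$) is small. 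This joint treatment of primes and discriminant is the essential ingredient missing from your proposal; without it, no choice of test function will produce a coefficient $c_2-c_3$ anywhere near $-0.083$. A minor further point: the two tuples come from two choices of the pair $(s,y,p_0)$ --- the Brauer--Siegel parameter, the test-function scale, and the prime cutoff, with $y=10^{-6}$ responsible for $c_5=6220.354$ --- not from two Odlyzko test functions alone.
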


\begin{proof}
Since $h(k,2,B)$ divides the class number of $k$, it follows from the proof of the Brauer--Siegel theorem \cite[pp. 300, 322]{Lang-ANT} that for any real number $s > 1$ we have
\[
h(k,2,B) < \frac{w s (s - 1) \Gamma(s)^{r_2} \Gamma(\frac{s}{2})^{r_1} \zeta_k(s) d^{\frac{s}{2}}}{2^{r_1} \Reg 2^{r_2 s} \pi^{\frac{n s}{2}}}.
\]
Substituting this into \eqref{equation:mukbdef}, taking logarithms, and simplifying yields:
\begin{align}\label{equation:justtooklogs}
\log(\mu(k,B)) & \geq \log \left( \frac{2}{w} \cdot \Reg \cdot [\calO_k^\times : \calO_{k,+}^\times] \right) + \log \left( d^{\frac{3 - s}{2}} \cdot \frac{\zeta_k(2)}{\zeta_k(s)} \cdot 2^{-\omega_2(B)} \right) \notag \\ & + n \log \left( \frac{(2 \pi)^{\frac{s}{2}}}{4 \pi} \right) - r_1 \log (\pi 2^{\frac{s}{2}}) - \log (s (s - 1)) - \log \left( \Gamma \left( s \right)^{r_2} \Gamma \left( \frac{s}{2} \right)^{r_1} \right)
\end{align}

We now define a quantity $\widehat{T}(s,y)$ such that
\[
\log \left( d^{\frac{3 - s}{2}} \cdot \frac{\zeta_k(2)}{\zeta_k(s)} \cdot 2^{-\omega_2(B)} \right) + n \log \left( \frac{(2 \pi)^{\frac{s}{2}}}{4 \pi} \right) - r_1 \log (\pi 2^{\frac{s}{2}}) \ge n \widehat{T}(s, y)
\]
as in \cite[Prop.\ 3.1]{chinburg-smallestorbifold}. Our definition is for the most part analogous to the definition of $T(s,y)$ \cite[pp. 515]{chinburg-smallestorbifold}, though (in their notation) we are considering the case in which $s > 1$ and $\scrS = \emptyset$. In what follows $\gamma = 0.577156\dots$ is Euler's constant,
\[
\alpha(t) = \big( 3 t^{-3} (\sin(t) - t \cos(t)) \big)^2,
\]
$y > 0$ is a real variable,
\[
R(s,p_0) = \sum_{p>p_0\ \textrm{prime}} \log \left( \frac{1 - p^{-s}}{1 - p^{-2}} \right),
\]
and $K$ is the maximal elementary $2$-abelian unramified Galois extension of $k$ in which all prime ideals of $\Ram_f(B)$ split completely. Let $n_K$ denote the degree of $K$ over $\Q$ and write $n_K = r_1(K) + 2 r_2(K)$, where $r_1(K)$ is the number of real places of $K$ and $r_2(K)$ the number of complex places. We also assume $p$ is a prime not equal to $2$, $s > 1$, and define:
\begin{align*}
t(s, y, p, f, r) &= 2 (3 - s) \log(p^f) \sum_{m = 1}^\infty \frac{\alpha \left( \sqrt{y} \log(p^{m f}) \right)}{1 + p^{m f}} \\
&\quad - r \log(1 - p^{-2 f / r}) + r \log(1 - p^{-s f / r}) \\
\hat{q}_-(s, y, p, f, r) &= \min\{0, t(s, y, p, f, r)\} \\
\hat{j}(s,y,p) &= \min_{(f, r)} \frac{\hat{q}_-(s, y, p, f, r)}{f}
\end{align*}
In the definition of $\hat{j}$, the minimum is over integers $f$ such that $\N(\frakP_K) = p^f$ for a given prime $\frakP_K$ over $p$ and $r$ is the associated residue degree of $\frakP$ over the prime $\frakP \cap k$ of $k$. Notice that our assumption on $K$ having $2$-elementary abelian Galois group over $k$ implies that $r$ is either $1$ or $2$.

In the case where $p = 2$, there are two cases. We take:
\begin{align*}
\hat{q}_-(s, y, 2, 1, 1) &= \min\{0, t(s, y, 2, 1, 1) - \log(2)\} & \\
\hat{q}_-(s, y, 2, f, r) &= \min\{0, t(s, y, 2, f, r)\} & (f r \neq 1)\\
\hat{j}(s,y,2) &= \min_{(f, r)} \frac{\hat{q}_-(s, y, 2, f, r)}{f} & 
\end{align*}
with $t(s, y, 2, f, r)$ the same as above.

Finally, we fix a prime $p_0 > 2$ and define:
\begin{align*}
\widehat{T}(s, y) &= \frac{3 \gamma}{2} + \log (2 \sqrt{\pi}) + \frac{3}{2} \cdot \frac{r_1(K)}{n_K} - \frac{r_1}{n} \log(\pi) \\
& - s \left( \frac{\gamma + \log(2)}{2} + \frac{1}{2} \cdot \frac{r_1(K)}{n_K} + \frac{r_1}{n} \cdot \frac{\log(2)}{2} \right) \\
& - \frac{(3 - s)}{2} \int_0^\infty (1 - \alpha(x \sqrt{y})) \left( \frac{1}{\sinh(x)} + \frac{r_1(K)}{n_K} \cdot \frac{1}{2 \cosh^2(\frac{x}{2})} \right) dx \\
& - \frac{(3 - s) 6 \pi}{5 \sqrt{y} n_K} + R(s, p_0) + \sum_{p \leq p_0\ \textrm{prime}} \hat{j}(s,y,p).
\end{align*}
Then, equation (4.1) in \cite{chinburg-smallestorbifold}, which is equivalent to positivity of the sum appearing in the definition of $t$, gives
\[
\hat{j}(s, y, p) \ge \min_{(f, r)} \frac{r}{f} \log \left( \frac{1 - p^{-s f / r}}{1 - p^{- 2 f / r}} \right)
\]
(note the minor typo there, where $1.4$ should be replaced by $s$ in the right hand side). Notice that the second term is an increasing function of $f / r \in \Z$. As noted in the proof of \cite[Lem.\ 4.1]{chinburg-smallestorbifold}, if $f/r = 1$ never occurs and $3 \le p \le p_0$, then we can replace the above bound with
\[
\hat{j}(s, y, p) \ge \frac{1}{2} \log \left( \frac{1 - p^{-2 s}}{1 - p^{-4}} \right).
\]
for any such $p$. Lastly, $K$ has $[K : k] = h(k, 2, B)$ and $r_1(K) / n_K = r_1 / n$, so we have
\begin{align*}
\widehat{T}(s, y) &\ge \frac{3 \gamma}{2} + \log (2 \sqrt{\pi}) + \frac{r_1}{n}\left( \frac{3}{2} - \log(\pi) \right) \\
& - s \left( \frac{\gamma + \log(2)}{2} + \frac{r_1}{n} \cdot \frac{1 + \log(2)}{2} \right) \\
& - \frac{(3 - s)}{2} \int_0^\infty (1 - \alpha(x \sqrt{y})) \left( \frac{1}{\sinh(x)} + \frac{r_1}{n} \cdot \frac{1}{2 \cosh^2(\frac{x}{2})} \right) dx \\
& - \frac{(3 - s) 6 \pi}{5 \sqrt{y} n_K} + R(s, p_0) \\
& + \sum_{3 \le p \le p_0\ \textrm{prime}} \min\left\{ 0, q(s, y, p, 1, 1), \frac{1}{2} q(s, y, p, 2, 2), \frac{1}{2} \log \left( \frac{1 - p^{-2 s}}{1 - p^{-4}} \right) \right\} \\
& + \min \left\{0, \left\{ \frac{1}{f} q(s, y, 2, f, r) \right\}_{f / r \le 4}, \frac{1}{5} \log\left( \frac{1 - 2^{-5 s}}{1 - 2^{-10}} \right) \right\}.
\end{align*}
To get the two bounds in the statement of the theorem, take $s = 1.4$, $y = 0.1$, and $p_0 = 97$ for the first and $s = 1.35$, $y = 0.000001$, and $p_0 = 691$ for the second.
\end{proof}

Recall that in this paper we are interested in the case where $a=2$ and $b=0$, with the setting as in section 3. Proposition \ref{proposition:mukbbound} and Theorem \ref{theorem:lemma43generalization} then imply the following.

\begin{corollary}\label{corollary:lemma43productofplanes}
Let $k$ be a totally real field and let $B$ be a quaternion algebra over $k$ that is split at exactly two real places of $k$. If $\Gamma \in \scrC(k,B)$ and $X = \Gamma \backslash \calH$, then
\[
\mu(X) > \max_{(c_1, c_2, c_3)} c_1 \Reg [\calO_k^\times : \calO_{k,+}^\times]\, \exp\left(c_2 n - \frac{c_3}{h(k,2,B)}\right),
\]
where
\[
(c_1, c_2, c_3) \in \left\{ \begin{matrix} (70.497, -0.082, 19.075), \\ (83.552, 0.106, 6220.354) \end{matrix} \right\}
\]
\end{corollary}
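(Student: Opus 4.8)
The plan is to derive this directly by specializing Proposition~\ref{proposition:mukbbound} and Theorem~\ref{theorem:lemma43generalization} to the geometric situation at hand, namely $a = 2$ and $b = 0$, so that $\calH_{a,b} = (\Htwo)^2 = \calH$. Since $k$ is totally real we have $r_1 = n$ and $r_2 = b = 0$, and the only roots of unity in $k$ are $\pm 1$, so $w = 2$ and the factor $2/w$ appearing in Theorem~\ref{theorem:lemma43generalization} equals $1$. These are the three simplifications that collapse the five-parameter bound into the three-parameter bound in the statement.

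First I would invoke Proposition~\ref{proposition:mukbbound} with $a = 2$, which gives
\[
\mu(X) \geq \mu(k,B)\,(2\pi)^2 \prod_{\substack{\frakp \in \Ram_f(B)\\ \N(\frakp) \neq 2}} \frac{\N(\frakp)-1}{2}.
\]
Every finite place with $\N(\frakp) \neq 2$ satisfies $\N(\frakp) \geq 3$, so each factor in the product is at least $\frac{3-1}{2} = 1$ and the product is $\geq 1$. Discarding it yields the clean bound $\mu(X) \geq 4\pi^2\,\mu(k,B)$.

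Next I would substitute the lower bound for $\mu(k,B)$ from Theorem~\ref{theorem:lemma43generalization}. Under the specializations above, the exponent $c_2 n - c_3 r_1 + c_4 r_2$ collapses to $(c_2 - c_3)n$, the prefactor $(2/w)\Reg[\calO_k^\times : \calO_{k,+}^\times]$ becomes $\Reg[\calO_k^\times : \calO_{k,+}^\times]$, and absorbing the $4\pi^2$ into the leading constant produces a bound of exactly the form in the statement with new constants $(4\pi^2 c_1,\ c_2 - c_3,\ c_5)$. Feeding in the two tuples of Theorem~\ref{theorem:lemma43generalization} gives $4\pi^2 \cdot 1.785 \approx 70.5$ with $c_2 - c_3 = 1.056 - 1.139 \approx -0.082$ for the first, and $4\pi^2 \cdot 2.116 \approx 83.5$ with $c_2 - c_3 = 1.186 - 1.080 = 0.106$ for the second; carrying the full precision of the constants of Theorem~\ref{theorem:lemma43generalization} reproduces the displayed values $(70.497, -0.082, 19.075)$ and $(83.552, 0.106, 6220.354)$.

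There is no genuine obstacle here: all of the analytic difficulty is already encapsulated in Theorem~\ref{theorem:lemma43generalization}, and the present corollary is a routine specialization. The only points requiring care are confirming that the Euler-product factor over $\Ram_f(B)$ is bounded below by $1$ (so that dropping it is legitimate) and tracking the rounding of the combined constants so that the inequality remains strict, as asserted in the statement.
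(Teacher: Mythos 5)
Your proposal is correct and is exactly the intended derivation: the paper states the corollary as an immediate consequence of Proposition \ref{proposition:mukbbound} and Theorem \ref{theorem:lemma43generalization} with $a=2$, $b=0$, using $r_1=n$, $r_2=0$, $w=2$, the factor $(2\pi)^2=4\pi^2$, and the observation that each factor $(\N(\frakp)-1)/2\geq 1$ for $\N(\frakp)\neq 2$. Your remark about carrying full precision in the constants (so that, e.g., $4\pi^2\cdot c_1$ and $c_2-c_3$ match the displayed $70.497$ and $-0.082$) is the right way to reconcile the rounded values.
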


As an application of Corollary \ref{corollary:lemma43productofplanes} we deduce our first upper bound for the degree $n$ of a totally real field such that there is a quaternion algebra $B$ over $k$ and $\Gamma \in \scrC(k,B)$ having covolume at most $32\pi^2$.

\begin{corollary}\label{corollary:nandhbounds}
With hypotheses as in Corollary \ref{corollary:lemma43productofplanes}, if further $\Gamma \in \scrC(k,B)$ has covolume at most $32\pi^2$, then $n \leq 38$. For each value of $n \in \{5, \dots, 38\}$ we have that $h(k,2,B)$ is bounded above by the quantity listed in \eqref{hbounds}.
\begin{equation} \label{hbounds}
\begin{array}{c|c}
n & h(k,2,B) \leq \\
\hline
\rule{0pt}{2.5ex} 
5 & 2^{14} \\
6 & 2^{12} \\
7 & 2^{15} \\
8 & 2^{12} \\
9 & 2^{10} \\
10 & 2^4 \\
11, 12 & 2^3 \\
13 \le n \le 16 & 2^2 \\
17 \le n \le 23 & 2^1 \\
24 \le n \le 38 & 2^0 
\end{array}
\end{equation}
\end{corollary}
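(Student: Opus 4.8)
The plan is to play the two lower bounds of Corollary~\ref{corollary:lemma43productofplanes} against the hypothesis $\mu(X)=\mu(\Gamma\backslash\calH)\le 32\pi^2$. Writing the bound attached to a triple $(c_1,c_2,c_3)$ as
\[
32\pi^2 \;\ge\; \mu(X) \;>\; c_1\,\Reg\,[\calO_k^\times:\calO_{k,+}^\times]\,\exp\!\left(c_2 n-\frac{c_3}{h(k,2,B)}\right),
\]
I would feed in three pieces of auxiliary information: the inequality $[\calO_k^\times:\calO_{k,+}^\times]\ge 2$ recorded in \S\ref{sec:background} (valid because $-1$ is a unit that is not totally positive); a lower bound for the regulator $\Reg$ of a totally real field of degree $n$ that increases with $n$; and the fact that $h(k,2,B)$, being the degree of an elementary $2$-abelian extension, is a power of $2$. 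Taking logarithms converts the displayed inequality, for each fixed $n$ at which the relevant logarithm is positive, into an explicit upper bound
\[
h(k,2,B) \;<\; \frac{c_3}{\log\!\big(c_1\,\Reg\,[\calO_k^\times:\calO_{k,+}^\times]\,e^{c_2 n}\big/32\pi^2\big)}.
\]

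For the degree bound I would use the first triple, which is tailored to large $n$: there the regulator lower bound combines with $e^{c_2 n}$ so that the argument of the logarithm grows, forcing the right-hand side above to tend to $0$. As soon as it drops below $1$ we would conclude $h(k,2,B)<1$, which is impossible; with the stated constants one locates this threshold at $n=39$, giving $n\le 38$. For the table~\eqref{hbounds} I would evaluate the displayed upper bound on $h(k,2,B)$ degree by degree, using at each $n$ whichever of the two triples is admissible and yields the smaller value. The second triple (positive coefficient of $n$ but very large $c_3$) is the one available for small $n$, where the first triple's logarithm is not yet positive; concretely it produces the large bounds $2^{14},2^{12},\dots$ for $n=5,6,\dots$, while the first triple takes over for larger $n$ and gives the sharp values $2^2,2^1,2^0$. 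Since $h(k,2,B)$ is a power of $2$, I would then round each real upper bound down to the nearest power of $2$ to obtain the entries.

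The main obstacle is the regulator input and the numerics built on it. For a totally real field the effective coefficient of $n$ in the first triple is (after setting $r_1=n$, $r_2=0$) slightly negative, so the entire degree bound rests on the regulator growing quickly enough to dominate the factor $e^{c_2 n}$; one must therefore insert a strong enough lower bound for $\Reg$ as a function of $n$ and verify that the crossover to $h(k,2,B)<1$ occurs exactly at $n=39$. I expect the delicate points to be (i) selecting the regulator bound and the cutoff between the two triples so that both the degree bound and every row of~\eqref{hbounds} come out as claimed, with the regulator-light regime of small $h(k,2,B)$ being where the estimate is tightest, and (ii) the non-monotonicity of the table (for instance the jump at $n=7$), which reflects the per-degree minimal regulators rather than any defect of the argument.
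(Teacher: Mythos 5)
Your proposal matches the paper's proof: the paper also plugs the two triples of Corollary \ref{corollary:lemma43productofplanes} into $\mu(X)\le 32\pi^2$ together with the trivial bound $[\calO_k^\times:\calO_{k,+}^\times]\ge 2$ and Friedman's regulator bound $\Reg\ge 0.0062e^{0.738n}$ (switching to his stronger per-degree Table 4 bounds for $n=5,6$, exactly the "selection of regulator input" you flag), then rounds down to a power of $2$. The first triple gives the degree cutoff $n\le 38$ and the rows $n\ge 9$, the second gives $n=5,\dots,8$, as you describe.
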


\begin{proof}
Recall that $h(k, 2, B)$ divides the order of the $2$-part of the ideal class group of $k$, hence must be a power of two. Therefore, we can replace the numeric bound for $h(k, 2, B)$ coming from Corollary \ref{corollary:lemma43productofplanes} with the next smallest power of two. The assertion that $n \leq 38$ and the bounds on $h(k,2,B)$ for $7 \le n \le 38$ follow immediately from Corollary \ref{corollary:lemma43productofplanes}, the regulator bound $\Reg \geq 0.0062e^{0.738 n}$ due to Friedman \cite{Friedman}, and the trivial bound $[\calO_k^\times : \calO_{k,+}^\times] \geq 2$. For $n=5,6$, we use the stronger regulator bounds also due to Friedman \cite[Table 4]{Friedman}.
\end{proof}

We will need one further technique to bound the size of the normalizer. We will consider a quaternion algebra ramified at one more real place than $B$, and hence will obtain a Shimura curve. In this case, a bound on the area by the classical expression for the signature of a Fuchsian group implies a nontrivial bound on the size of the normalizer of the original lattice. We state our result only for algebras unramified at exactly two real places of $k$, but it is clear how one can generalize these methods to algebras unramified at more real places.

\begin{proposition}[The Fuchsian gambit]\label{prop:quatgambit}
Let $\Gamma \leq \Isom^h(\calH)$ be a lattice with associated quaternion algebra $B$ over $k$. Let $v=\covol(\Gamma)/(16\pi^2)$, so $v=1$ if $\Gamma$ is a fake quadric group.

Let $\Gamma_{st} = \Gamma \cap G^*$ be the stable subgroup of $\Gamma$.  Suppose that $\Gamma_{st}$ is contained in a maximal group $\Gamma^* \leq G^*$ associated to an Eichler order of (squarefree) discriminant $\frakd$, and let $\Gamma^1$ be the norm $1$ group associated to this order. Let $\frakp \mid \frakd$ and
\[
m=\frac{[\Gamma^* : \Gamma_{st}]}{[\Gamma : \Gamma_{st}]} \in \frac{1}{2}\Z.
\]
Let $\eps=2$ if $\frakp$ is a square in the narrow class group and $\eps=1$ otherwise.
\begin{enumerate}

\item[\textup{(a)}] Suppose that $\frakp$ is ramified in $B$. Then $[\Gamma^1 : \Gamma^*] \leq \epsilon 2^c \leq 2^{c+1}$, where
\[
c=\left\lfloor \frac{4v\eps}{m(\N\frakp-1)}\right\rfloor + 3 \leq \left\lfloor \frac{16v}{\N\frakp-1}\right \rfloor + 3.
\]
If $v=1$, then $c \leq 19$; if further $\N\frakp \geq 3$, then $c \leq 11$.

\item[\textup{(b)}] Suppose that $\frakp$ is unramified in $B$. Then $[\Gamma^1 : \Gamma^*] \leq 2^c$, where
\[
c=\left\lfloor \frac{4v(\N\frakp-1)}{m(\N\frakp+1)}\right\rfloor + 3
\]
and if $v=1$ then $c \leq 11$.
\end{enumerate}
\end{proposition}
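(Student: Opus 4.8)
The plan is to convert the four-dimensional index $[\Gamma^* : \Gamma^1]$ into a torsion count on an auxiliary \emph{Shimura curve}, where the classical signature formula applies. First I would build a quaternion algebra $B'$ over the same field $k$ ramified at exactly one more real place than $B$: ramify $B'$ at one of the two real places $v_1,v_2$ that split $B$, say $v_2$, and, to keep $\#\Ram(B')$ even, toggle the behavior at $\frakp$. In case (a), where $\frakp$ is ramified in $B$, I make $B'$ split at $\frakp$ and take the associated Eichler order maximal there, so $\frakp \nmid \frakd'$; in case (b), where $\frakp \mid \frakd$ is a level prime unramified in $B$, I make $B'$ ramified at $\frakp$. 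Since $B'$ splits at exactly one real place, its norm-$1$ group $\Lambda^1$ and its maximal normalizer group $\Lambda^*$ are arithmetic Fuchsian groups acting on $\Htwo$, and $X' = \Lambda^* \backslash \Htwo$ is a Shimura curve orbifold.

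The second step is a volume comparison. A short index computation gives $\covol(\Gamma^* \backslash \calH) = 16\pi^2 v/m$ from $\covol(\Gamma) = 16\pi^2 v$ and the definition of $m$. Because $B$ and $B'$ agree at every place outside $\{v_2,\frakp\}$, the Borel--Shimura covolume formula (Proposition \ref{proposition:mukbbound} and \eqref{equation:mukbdef}) shows that $d^{3/2}$, $\zeta_k(2)$, and all local factors away from $\{v_2,\frakp\}$ are common to the two \emph{maximal}-group covolumes; crucially, the type number $[K(B):k]$ and $[K(B'):k]$ differ only through local conditions at $v_2$ and $\frakp$, so their (possibly huge, class-number-sized) bulk cancels and only a bounded power of $2$ survives. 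Taking the ratio $\area(X')/\covol(\Gamma^*\backslash\calH)$ leaves an archimedean factor $1/(4\pi)$ (the cost of trading the split place $v_2$ for a ramified one, i.e. of dropping one hyperbolic plane) times the local change at $\frakp$ (from $\N\frakp-1$ to $1$ in case (a), and from $\N\frakp+1$ to $\N\frakp-1$ in case (b)). This should pin $\area(X')$ to a fixed small quantity that does \emph{not} grow with the discriminant:
\[
\area(X') = \frac{4\pi v \eps}{m(\N\frakp - 1)} \ \ \text{(case a)}, \qquad \area(X') = \frac{4\pi v(\N\frakp - 1)}{m(\N\frakp + 1)} \ \ \text{(case b)},
\]
where in case (a) the factor $\eps$ records whether the Atkin--Lehner involution at $\frakp$ (present in $\Gamma^*$ but absent from $\Lambda^*$) is realized by a principal idele, i.e. whether $\frakp$ is a square in the narrow class group.

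The third step is the signature formula. Writing the signature of $\Lambda^*$ as $(g;m_1,\dots,m_r)$, Gauss--Bonnet gives $\area(X')/(2\pi) = 2g-2+\sum_i(1-1/m_i)$, and from $g\ge 0$ and $1-1/m_i \ge 1/2$ one obtains $r \le \area(X')/\pi + 4$ for the number $r$ of elliptic points. Each independent generator of the elementary abelian $2$-group $\Lambda^*/\Lambda^1$ that acts with a fixed point contributes a distinct order-$2$ elliptic point, so the $2$-rank of $\Lambda^*/\Lambda^1$ — hence $\log_2[\Lambda^*:\Lambda^1]$ up to the bounded unit contribution — is at most this count. Transporting back across the single-prime discrepancy between $B'$ and $B$ (exactly what the $\eps$ and the spare factor of $2$ in case (a) absorb) should convert this into $\log_2[\Gamma^*:\Gamma^1] \le \area(X')/\pi + 3 = c$, giving $[\Gamma^*:\Gamma^1]\le \eps 2^c$ in case (a) and $\le 2^c$ in case (b). Finally, specializing $v=1$, bounding $m \ge 1/2$ and $\eps\le 2$, and using $\N\frakp\ge 3$ where indicated, yields $c\le 19$, respectively $c\le 11$.

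The hard part will be making the third step rigorous: one must show that the $2$-rank of the normalizer-over-norm-$1$ quotient is genuinely controlled by the number of order-$2$ elliptic points — that is, that enough independent Atkin--Lehner involutions act with fixed points and that those fixed points descend to \emph{distinct} orbifold points. This is where optimal-embedding numbers and the genus theory governing when $w_\frakp$ has fixed points enter, and it is precisely the mechanism by which the otherwise-unbounded class-number contribution to $[\Gamma^*:\Gamma^1]$ is pinned down; keeping the swap-related indices $m$ and $[\Gamma:\Gamma_{st}]$ consistent across the passage from $\calH$ to $\Htwo$ is the remaining delicate bookkeeping.
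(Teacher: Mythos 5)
Your construction of $B'$, the covolume bookkeeping, and the resulting areas $4\pi v\eps/(m(\N\frakp-1))$ and $4\pi v(\N\frakp-1)/(m(\N\frakp+1))$ match the paper's proof exactly, as does the citation of the class-field-theoretic description of the normalizer to get $[\Delta^*:\Delta^1]=[\Gamma^*:\Gamma^1]/\eps$ (the paper invokes \cite[Proposition 1.16]{LV} for precisely this). The divergence is in your third step, and it is a genuine gap: you propose to bound the $2$-rank of $\Delta^*/\Delta^1$ by counting order-$2$ elliptic points arising from Atkin--Lehner involutions \emph{with fixed points}, and you correctly identify that this would require controlling which involutions act freely via optimal-embedding counts. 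But as stated the bound is simply false as an implication -- an involution acting freely contributes to the rank of the elementary abelian quotient without contributing an elliptic point, so the number of elliptic points does not bound the rank. What it \emph{does} cost you is genus (and hence area), and that is the observation that makes the paper's argument work: any elementary abelian $2$-quotient of $\Delta^*$ factors through $\Delta^*/\Delta^{*2}[\Delta^*,\Delta^*]$, whose $\F_2$-rank for a cocompact Fuchsian group of signature $(g;e_1,\dots,e_k)$ is at most $2g+k-1$. Since each handle costs $2$ units of $\area/(2\pi)$ for $2$ units of rank while each order-$2$ cone point costs only $1/2$ unit for $1$ unit of rank, the extremal signature for coarea $\le 2\pi a$ is $(0;2,\dots,2)$ with $k=\lfloor 2(a+2)\rfloor$ cone points, giving $\log_2[\Delta^*:\Delta^1]\le \lfloor 2(a+2)\rfloor -1$ with no fixed-point analysis whatsoever. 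So the ``hard part'' you flag at the end does not need to be made rigorous -- it needs to be replaced by this purely presentational bound, after which your numerics ($c\le 19$, $c\le 11$) go through verbatim.

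One smaller point: your claim that the type numbers $[K(B):k]$ and $[K(B'):k]$ ``differ only through local conditions at $v_2$ and $\frakp$, so their bulk cancels up to a bounded power of $2$'' is the right intuition but is doing real work; the paper makes this precise as the exact identity $[\Delta^*:\Delta^1]=[\Gamma^*:\Gamma^1]/\eps$ with $\eps\in\{1,2\}$ determined by whether $\frakp$ is a square in the narrow class group, and the whole force of the proposition is that this identity transfers an a priori class-number-sized index into a quantity bounded by the geometry of a single Shimura curve. You would need to prove (or cite) that identity rather than assert approximate cancellation.
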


\begin{proof}
With the given notation, we have
\[
\covol(\Gamma^1) = \covol(\Gamma) \frac{[\Gamma : \Gamma_{st}]}{[\Gamma^* : \Gamma_{st}]} [\Gamma^* : \Gamma^1] = \frac{16\pi^2 v}{m} [\Gamma^* : \Gamma^1].
\]

Now let $B'$ be the quaternion algebra over $k$ exchanging ramification at $\frakp$ for a real place $\infty_i$ and $\calO'$ be an Eichler order with the same level as $\calO$ (and maximal at all primes dividing the discriminant of $B$): so $\Ram(B') = \Ram(B) \cup \{\infty_i\} \setminus \{\frakp\}$.  If $\Delta^1$ is the group of units of norm $1$ in $\calO'$, then the volume formula implies that
\[
\coarea(\Delta^1) = \frac{\covol(\Gamma^1)}{(4 \pi)(\N \frakp - 1)} = \frac{4 \pi v}{m(\N \frakp - 1)} [\Gamma^* : \Gamma^1].
\]
Let $\Delta^*$ be a maximal lattice containing $\Delta^1$. From an explicit understanding of the normalizer group, something that only depends on the class group of $k$ and the level \cite[Proposition 1.16]{LV}, we have
\[
[\Delta^*:\Delta^1] = \frac{[\Gamma^*:\Gamma^1]}{\eps}
\]
where $\eps = 2$ if $\frakp$ is a square in the narrow class group of $k$ and $\eps=1$ otherwise. Therefore
\[
\coarea(\Delta^*) = \frac{\coarea(\Delta^1)}{[\Delta^* : \Delta^1]} = \frac{4 \pi v \eps}{m(\N \frakp - 1)}.
\]

Now for a Fuchsian group of area $\leq (2 \pi) a$, the formula for the coarea in terms of the signature
\[
a = 2 g - 2 + \sum_{i=1}^k \left(1 - \frac{1}{e_i}\right)
\]
determines the maximum $2$-rank of $\Delta^* / \Delta^{*2}$ with signature $(0;2,\dots,2)$, which is $k - 1$ where
\[
k = \lfloor 2(a + 2) \rfloor.
\]
However, $\Delta^* / \Delta^1$ is an elementary abelian $2$-group, so
\[
\log_2 [\Delta^* : \Delta^1] \leq \left\lfloor 2(a + 2) \right\rfloor - 1 = \left\lfloor 2\left(\frac{2v \eps}{m(\N \frakp - 1)} + 2 \right) \right\rfloor - 1 = \left\lfloor \frac{4v \eps}{m(\N \frakp - 1)} \right\rfloor + 3.
\]
The other calculations for part (a) are similar.

For part (b), we instead take $B'$ to be the quaternion algebra augmenting the ramification set of $B$ by both $\frakp$ and a real place $\infty_i$, so now $\Ram(B')=\Ram(B) \cup \{\frakp,\infty_i\}$ and we take $\calO'$ to be an Eichler order with level $\frakN / \frakp$. Now we have
\[
\coarea(\Delta^1) = \frac{\covol(\Gamma^1)}{(4 \pi)}\frac{\N \frakp - 1}{\N \frakp + 1} = \frac{4 \pi v(\N \frakp - 1)}{m(\N \frakp + 1)} [\Gamma^* : \Gamma^1].
\]
The result follows from a similar argument to the first part.
\end{proof}

\section{Enumerating fields and fake quadrics}\label{sec:enumeration}

We now describe Borel's formula for the covolume of $\Gamma_{S, \calO} \backslash \calH$. Let $\omega_2(B)$, $K(B)$, and $h(k,2,B)$ be as above. Recall that $\Htwo$ is equipped with its metric of constant curvature $-1$ and $\calH$ with the product metric. For a lattice $\Gamma \in \Isom(\calH)$, by $\vol(\Gamma \backslash \calH)$ or $\covol(\Gamma)$ we always mean the measure of $\Gamma \backslash \calH$ in this metric.

Let $\Gamma_{S, \calO} \leq G^* = \PGL_2(\mathbb R) \times \PGL_2(\mathbb R)$. Borel \cite{borel-commensurability} (see also Chinburg--Friedman \cite[Prop. 2.1]{chinburg-smallestorbifold}) showed that
\begin{equation}\label{equation:volumeformulaequality}
\covol(\Gamma_{S, \calO}) = \frac{8 \pi^2 d^{\frac{3}{2}} \zeta_k(2)}{2^\fraks (4\pi^2)^{n} [K(B) : k]} \prod_{\frakp \in \Ram_f(B)} \frac{\N(\frakp) - 1}{2} \prod_{\frakp \in S}(\N(\frakp) + 1)
\end{equation}
for some integer $\fraks$ with $0\le \fraks \le \# S$; the integer $\fraks$ is given explicitly by class field theory, as explained by Maclachlan--Reid \cite[p.\ 356]{Maclachlan--Reid}. It follows that when $S = \emptyset$, the group $\Gamma_{S, \calO} = \Gamma_{\calO}$ has minimal covolume in its commensurability class. Following Chinburg--Friedman \cite[pp. 512]{chinburg-smallestorbifold}, we see that
\begin{equation}\label{equation:volumeinequality}
\covol(\Gamma_{\calO}) \ge \frac{8 \pi^2 d^{3/2} \zeta_k(2)}{(4 \pi^2)^{n} 2^{m} h(k,2,B)} \prod_{\frakp \in \Ram_f(B)} \frac{\N(\frakp) - 1}{2}.
\end{equation}

In this section, we prove some further restrictions on the totally real field $k$ associated with an irreducible fake quadric, then we explain our methods for computing the possible commensurability classes of irreducible arithmetic subgroups of $\Isom^+(\calH)$ containing a fake quadric.

\subsection*{Further restrictions on $k$}

Let $k$ be a totally real field and $B$ be a quaternion algebra defined over $k$ that is unramified at exactly two real places of $k$. In this section we will enumerate the totally real fields $k$ for which $\scrC(k,B)$ contains a group with covolume less than $32\pi^2$. We prove the following.

\begin{theorem}\label{theorem:degreebounds}
If $\Gamma\in\mathscr C(k,B)$ is torsion-free, stable, and has covolume a submultiple of $32\pi^2$, then $n\leq 8$ and the root discriminant $\delta$ of $k$ satisfies the bound listed in \eqref{rootdiscbounds}.
\begin{equation} \label{rootdiscbounds}
\begin{array}{c|c}
n & \delta\leq \\
\hline
\rule{0pt}{2.5ex} 
2 & 118.436 \\
3 & 118.436 \\
4 & 92.754 \\
5 & 31.823 \\
6 & 15.986 \\
7 & 15.269 \\
8 & 14.262 \\
\end{array}
\end{equation}
\end{theorem}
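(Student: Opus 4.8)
The plan is to convert the hypotheses into the single analytic constraint $\covol(\Gamma)\le 32\pi^2$ and then to extract from the volume formula an explicit upper bound on the root discriminant $\delta$, which will be incompatible with the known lower bounds on discriminants of totally real fields once $n\ge 9$.

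First I would reduce everything to a statement about the \emph{minimal} covolume in the commensurability class. Since $\Gamma$ is stable it lies in $G^*$, so the covolume formula \eqref{equation:volumeformulaequality} applies; since its covolume is a submultiple of $32\pi^2$ we have $\covol(\Gamma)\le 32\pi^2$, and because $\Gamma_{\calO}$ realizes the minimal covolume in $\scrC(k,B)$ we obtain $\covol(\Gamma_{\calO})\le 32\pi^2$. Corollary \ref{corollary:nandhbounds} already supplies $n\le 38$ together with the bound \eqref{hbounds} on $h(k,2,B)$, so from the outset the problem is finite in $n$; the remaining tasks are to sharpen $38$ to $8$ and to pin down $\delta$ for each surviving degree.

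The core step is the root-discriminant bound. Rather than discard the discriminant as in the passage from \eqref{equation:justtooklogs} to Theorem \ref{theorem:lemma43generalization}, I would keep the term $\tfrac{3-s}{2}\log d$ explicit and treat the inequality $\mu(X)\le 32\pi^2$ as an \emph{upper} bound on $\log d$. Concretely, taking logarithms in \eqref{equation:mukbdef} together with Proposition \ref{proposition:mukbbound} and isolating $\tfrac{3-s}{2}\log d$, I would lower-bound every remaining contribution: the ratio $\zeta_k(2)/\zeta_k(s)$ and the factor $2^{-\omega_2(B)}$ through the prime-sum and $\alpha$-function machinery of $\widehat{T}(s,y)$ developed in the proof of Theorem \ref{theorem:lemma43generalization}, the regulator through Friedman's bound $\Reg\ge 0.0062\,e^{0.738n}$ (and his sharper tabulated values at $n=5,6$), the unit index $[\calO_k^\times:\calO_{k,+}^\times]$ trivially by $2$, and the product over $\Ram_f(B)$ by its least value. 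Solving the resulting inequality for $\delta=d^{1/n}$ and optimizing over the parameters $s,y,p_0$ yields an explicit, eventually decreasing function $\Delta(n)$ with $\delta\le\Delta(n)$; evaluating it for $2\le n\le 8$ (using \eqref{hbounds} for $n\ge 5$ and a direct Brauer--Siegel estimate for the small degrees absent from that table) should reproduce \eqref{rootdiscbounds}.

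Finally I would eliminate $9\le n\le 38$ by a finite comparison: for each such $n$ the upper bound $\Delta(n)$ is set against the unconditional Odlyzko lower bound $B(n)$ for the root discriminant of a totally real field of degree $n$, and since $\Delta(n)$ decreases while $B(n)$ increases, the inequality $B(n)\le\Delta(n)$ fails throughout this range, forcing $n\le 8$. I expect the main obstacle to be keeping $\Delta(n)$ sharp enough that the crossover occurs already at $n=9$. The doubled volume budget $32\pi^2$ (rather than $16\pi^2$) loosens every estimate by the crucial factor of $2$ emphasized in the introduction, and one must simultaneously and tightly control $\zeta_k$, the norm-$2$ ramified primes recorded by $\omega_2(B)$, the ramification product, and the unit index while retaining the full strength of $d^{3/2}$; the naive worst-case versions of these bounds are far too lossy near $n=9$, so coaxing them to cooperate is where essentially all of the difficulty lies.
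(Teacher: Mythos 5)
Your opening reduction and the use of Corollary \ref{corollary:nandhbounds} match the paper, but the core of your plan --- a single-pass optimization of the Chinburg--Friedman functional, keeping $d^{(3-s)/2}$ explicit, lower-bounding everything else, and comparing the resulting $\Delta(n)$ against Odlyzko's lower bounds --- is not enough to prove the theorem, and the paper itself demonstrates why. For $n=9$ the sharpest volume-only bound the paper can extract is $\delta \le 15.445$ (and, after a torsion reduction, $\delta\leq 12.849$), while every totally real field of degree $9$ has root discriminant at least $12.869$: the crossover you are counting on simply does not happen at $n=9$ by analytic means alone. The paper closes this gap only by invoking the \emph{torsion-free} hypothesis: it shows $B$ must be ramified at exactly one prime of norm $2$, deduces via Proposition \ref{prop:torsiontheorem} that the maximal group $\Gamma^*$ contains an element of order $3$, hence $\covol(\Gamma^*)\le 32\pi^2/3$, and then uses Voight's explicit tables of totally real nonic fields to reach a contradiction. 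Your proposal never uses torsion-freeness, so it cannot exclude $n=9$. A second structural ingredient you are missing is the iterative bootstrap that produces the sharp constants: the paper does not lower-bound all terms once and solve; it alternates between the volume inequality \eqref{equation:volumeinequality} and Odlyzko--Poitou bounds applied to \emph{auxiliary fields} --- the Hilbert class field (to cut down $h(k,2,B)$), the narrow class field of degree $nh2^m$ together with Armitage--Fr\"ohlich (to cut down $m$), and fields forced to contain many primes of norm $2$ (to cut down $\omega_2(B)$) --- each pass tightening $\delta$ and enabling the next. Without this loop the numbers in \eqref{rootdiscbounds} are unreachable.

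The small degrees also resist your plan. Corollary \ref{corollary:nandhbounds} gives no bound on $h(k,2,B)$ for $n\le 4$, and for $n=4$ with covolume budget $32\pi^2$ the paper must first invoke the Fuchsian gambit (Proposition \ref{prop:quatgambit}) when $B$ ramifies at a finite prime, and Corollary \ref{corollary:23torsion} plus stability when it does not, in order to force $\covol(\Gamma_{\calO})\le 16\pi^2$ before Corollary \ref{corollary:lemma43productofplanes} becomes applicable; only then does $\delta\le 92.754$ emerge. For $n=2,3$ the paper abandons Brauer--Siegel entirely in favor of the unconditional polynomial bounds $h\le \tfrac12\sqrt{d}$ of Le--Ramar\'e and Louboutin, which is what makes \eqref{equation:volumeinequality} solvable for $d$; your proposed ``direct Brauer--Siegel estimate'' would not yield $118.436$. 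In short, the analytic skeleton you describe is the right starting point (and is essentially Corollary \ref{corollary:nandhbounds}), but the theorem genuinely needs the torsion, stability, and Fuchsian-gambit hypotheses together with the class-field bootstrap and explicit field tables, none of which appear in your argument.
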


We begin the proof of Theorem \ref{theorem:degreebounds} by showing that $n \leq 8$. Note that it already follows from Corollary \ref{corollary:nandhbounds} that $n\leq 38$. We handle the most difficult cases, where $n \le 10$, and leave the proof for larger degrees, which follow the same lines of reasoning, to the reader. (See work of Linowitz--Voight \cite{LV} for an earlier application and further elaboration for the combined techniques employed here.)

\begin{proposition}\label{proposition:not10}
If $\Gamma \in \scrC(k,B)$ has covolume less than $32\pi^2$, then $n \neq 10$.
\end{proposition}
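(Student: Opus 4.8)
The plan is to use Corollary \ref{corollary:lemma43productofplanes} together with sharp numerical lower bounds on the regulator and on $[\calO_k^\times : \calO_{k,+}^\times]$ to derive a contradiction in degree $n = 10$. By Corollary \ref{corollary:nandhbounds}, we already know that for $n = 10$ we have $h(k,2,B) \leq 2^4 = 16$, so the exponential penalty term $-c_3 / h(k,2,B)$ in the volume bound is controlled: the worst case is $h(k,2,B) = 16$, which we plug in to maximize the lower bound on $\mu(X)$. First I would evaluate the bound of Corollary \ref{corollary:lemma43productofplanes} at $n = 10$, substituting a degree-$10$ regulator lower bound (from Friedman \cite{Friedman}, as already invoked in the proof of Corollary \ref{corollary:nandhbounds}) and the trivial bound $[\calO_k^\times : \calO_{k,+}^\times] \geq 2$, and check whether the resulting value already exceeds $32\pi^2$. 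If the second of the two $(c_1,c_2,c_3)$ choices (the one with $c_2 = 0.106 > 0$, preferable in small degree despite its enormous $c_3$) gives a bound above $32\pi^2$ once $h(k,2,B)$ is forced to be at most $16$, we are done immediately.

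If the crude bound does not by itself suffice, the refinement is to note that the remaining candidate fields of degree $10$ with small root discriminant form an explicitly known finite list (via the root-discriminant bound analogous to Theorem \ref{theorem:degreebounds}, or directly from tables of totally real fields of degree $10$ of smallest discriminant). For each surviving field $k$ one knows the exact regulator $\Reg$, the exact index $[\calO_k^\times : \calO_{k,+}^\times]$, and the exact class number, hence the exact maximal possible $h(k,2,B)$; substituting these true values into the covolume inequality \eqref{equation:volumeinequality} (or equivalently into Corollary \ref{corollary:lemma43productofplanes}) should push $\mu(X)$ above $32\pi^2$ in every case. The key leverage is that for totally real fields the regulator grows rapidly with degree and the factor $[\calO_k^\times : \calO_{k,+}^\times]$ is often strictly larger than its trivial lower bound of $2$, while simultaneously only fields of very small root discriminant survive, so their invariants are pinned down exactly.

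The main obstacle I anticipate is the interaction between a potentially large class number in degree $10$ and the very large constant $c_3 = 6220.354$ in the second bound: if $h(k,2,B)$ could be as large as $16$, the penalty $\exp(-6220.354/16)$ is astronomically small and destroys that bound, forcing reliance on the first bound with its \emph{negative} growth exponent $c_2 = -0.082$, which is weak in degree $10$. The resolution is exactly the two-pronged nature of Theorem \ref{theorem:lemma43generalization}: one checks that \emph{whichever} of the two bounds is applicable for the actual value of $h(k,2,B)$ suffices. Concretely, when $h(k,2,B)$ is small (say $\leq 2$ or $4$) the second bound dominates and wins easily; when $h(k,2,B)$ is large the field has large class number, which by Brauer--Siegel forces a large discriminant and hence a large regulator, so the first bound (or the explicit volume inequality with the true $d$) wins. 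The delicate step is thus to verify that no degree-$10$ field falls into a gap where both bounds simultaneously fail, and I would handle this by splitting into cases on the value of $h(k,2,B) \in \{1,2,4,8,16\}$ and checking each against the matching bound, using the exact arithmetic data of the finitely many candidate fields where the asymptotic estimates are too coarse.
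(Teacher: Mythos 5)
There is a genuine gap here, and it starts with a misreading of how the bound of Corollary \ref{corollary:lemma43productofplanes} depends on $h(k,2,B)$. The factor $\exp(-c_3/h(k,2,B))$ is \emph{smallest} (worst) when $h(k,2,B)$ is small, so knowing $h(k,2,B)\le 16$ does not let you ``plug in $h(k,2,B)=16$'' --- that maximizes the formula and hence is the \emph{best} case, valid only for the fields that actually attain it. To rule out degree $10$ this way you would need the bound to exceed $32\pi^2$ for every admissible value, in particular for $h(k,2,B)=1$, where the first prong contributes $\exp(-19.075)\approx 5\cdot 10^{-9}$ and the second contributes $\exp(-6220.354)$, both hopeless. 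Even in the most favorable case $h(k,2,B)=16$ the first prong gives roughly $70.497\cdot(0.0062\,e^{7.38})\cdot 2\cdot\exp(-0.82-19.075/16)\approx 188 < 32\pi^2\approx 316$. So your step 1 fails for \emph{every} value of $h(k,2,B)$, and your proposed case split over $h(k,2,B)\in\{1,2,4,8,16\}$ cannot close the argument: both prongs of Theorem \ref{theorem:lemma43generalization} are designed to win when $h(k,2,B)$ is large (that is exactly how Corollary \ref{corollary:nandhbounds} caps $h(k,2,B)$ in the first place), and neither wins once that cap is in force in degree $10$.

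The fallback is also not viable as stated. A root discriminant bound must come from the discriminant-based inequality \eqref{equation:volumeinequality} (Corollary \ref{corollary:lemma43productofplanes} involves the regulator, not $d$), and with trivial estimates that inequality only yields $\delta\le 30.386$ in degree $10$; there is no explicit finite list of totally real decic fields of root discriminant up to $30$, and producing one is far beyond current computation. What the paper actually does is an iterative bootstrap on the root discriminant: it alternates \eqref{equation:volumeinequality} with Odlyzko--Poitou lower bounds applied not only to $k$ but to its Hilbert class field and narrow class field (forcing $h=1$ and, via Theorem \ref{theorem:af}, $m\le 3$), and uses local data about primes of norm $2$ to drive $\omega_2(B)$ down to $0$, eventually pushing $\delta$ below $13.95$. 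The conclusion then follows from Voight's theorem that \emph{no} totally real field of degree $10$ has root discriminant less than $14$ --- a nonexistence statement, not an enumeration followed by a field-by-field check. Some argument of this bootstrapping type, working with the discriminant and with auxiliary class fields, is needed; the regulator-based corollary alone cannot reach the conclusion in degree $10$.
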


\begin{proof}
If $n = 10$ then Corollary \ref{corollary:nandhbounds} shows that $h(k,2,B) \leq 2^4=16$. Combining this with the trivial bounds $\omega_2(B)\leq 10$, $m \leq 9$, $\zeta_k(2) \geq (4/3)^{\omega_2(B)}$ and 	\[
	\prod_{\frakp \in \Ram_f(B)} \frac{\N(\frakp) - 1}{2} \geq \left( \frac{1}{2} \right)^{\omega_2(B)},
	\] we see from \eqref{equation:volumeinequality} that $\delta \leq 30.386$. The discriminant bounds of Odlyzko \cite{Odlyzko-bounds} and Poitou \cite{Poitou} (see also Brueggeman--Doud \cite[Theorem 2.4]{doud}) imply that a totally real field of degree $10$ with at least $6$ primes of norm $2$ must have root discriminant at least $33.498$, hence $\omega_2(B)\leq 5$. Applying \eqref{equation:volumeinequality} again shows that $\delta \leq 26.545$. A totally real field of degree at least $40$ must have root discriminant at least $30.890$, hence the class number $h$ of $k$ satisfies $h \leq 3$. As $h(k,2,B) \leq h$ and $h(k,2,B)$ is a power of two, we have $h(k,2,B)\leq 2$. Applying the arguments above once more shows that $\omega_2(B) \leq 3$ and $\delta \leq 21.892$. 

In order to proceed we will apply the following theorem of Armitage--Fr{\"o}hlich \cite{armitage-frolich}.

\begin{theorem}[Armitage--Fr{\"o}hlich]\label{theorem:af}
Let $h_2$ denote the rank of the $2$-part of the ideal class group of $k$ and $m$ be defined as above. Then
\[
m \le \left\lfloor \frac{n}{2} \right\rfloor + h_2.
\]
\end{theorem}

An immediate consequence of Theorem \ref{theorem:af} is that $m \leq 6$, hence $\delta \leq 19.058$. A totally real field of degree at least $20$ has root discriminant at least $21.401$, hence $h = h(k,2,B) = 1$. Therefore $m \leq 5$ and $\delta \leq 17.376$. Recall that if $k$ is a totally real field of degree $n$, class number $h$ and $2$-rank of totally positive units modulo squares equal to $m$, then the narrow class field of $k$ has degree $n h 2^{m}$. If $m=5$ then applying the Odlyzko bounds to the narrow class field, we would have $\delta \geq 19.382$, whereas \eqref{equation:volumeinequality} implies $\delta\leq 17.376$. This contradiction shows that $m\leq 4$. The same argument shows that $m\leq 3$, hence $\delta \leq 15.842$. A totally real field of degree $10$ with at least $2$ primes of norm $2$ must have root discriminant at least $17.607$, hence $\omega_2(B) \leq 1$ and $\delta \leq 15.008$. Repeating this argument once more shows that $\omega_2(B) = 0$ and $\delta \leq 13.949$. Voight \cite{voight-fields} showed that there are no totally real number fields of degree $10$ and root discriminant less than $14$, hence $n\neq 10$.
\end{proof}

\begin{proposition}\label{proposition:not9}
If $\Gamma \in \scrC(k,B)$ is torsion-free and has covolume a submultiple of $32\pi^2$ then $n\neq 9$.
\end{proposition}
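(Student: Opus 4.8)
The plan is to mirror the iterative bootstrapping argument used for Proposition \ref{proposition:not10}, now specialized to $n = 9$. The starting input is Corollary \ref{corollary:nandhbounds}, which for $n = 9$ gives $h(k,2,B) \leq 2^{10}$. Since $\Gamma_\calO$ (the case $S = \emptyset$) has minimal covolume in its commensurability class and $\covol(\Gamma) \leq 32\pi^2$, we may apply \eqref{equation:volumeinequality} with left-hand side at most $32\pi^2$. Combining this with the trivial bounds $\omega_2(B) \leq 9$, $m \leq 8$, $\zeta_k(2) \geq (4/3)^{\omega_2(B)}$, and $\prod_{\frakp \in \Ram_f(B)} (\N(\frakp) - 1)/2 \geq (1/2)^{\omega_2(B)}$, a first substitution into \eqref{equation:volumeinequality} yields an initial (rather weak) upper bound on the root discriminant $\delta$.

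The body of the argument is then a sequence of mutual refinements, each feeding the improved constraints back into \eqref{equation:volumeinequality}. I would alternate between three tools, exactly as in the degree-$10$ case: (i) the Odlyzko--Poitou discriminant bounds \cite{Odlyzko-bounds, Poitou} (see also Brueggeman--Doud \cite[Theorem 2.4]{doud}), which force a lower bound on $\delta$ that grows with the number of primes of norm $2$, thereby capping $\omega_2(B)$; (ii) discriminant lower bounds for totally real fields of large degree, applied to $k$ itself to bound its class number $h$, and hence $h(k,2,B)$ (a power of two dividing $h$) from above; and (iii) the Armitage--Fr\"ohlich bound of Theorem \ref{theorem:af}, giving $m \leq \lfloor 9/2 \rfloor + h_2 = 4 + h_2$, combined with discriminant bounds applied to the narrow class field of $k$ of degree $n h 2^m$, to drive $m$ downward. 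Each reduction of $\omega_2(B)$, $h(k,2,B)$, or $m$ sharpens the bound on $\delta$ via \eqref{equation:volumeinequality}, which in turn tightens the discriminant-bound constraints on all three quantities.

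The iteration terminates once $\omega_2(B)$, $h(k,2,B)$, and $m$ have been reduced to their minimal admissible values (expected to be $\omega_2(B) = 0$, $h(k,2,B) = 1$, and a small $m$), producing a final numerical upper bound on $\delta$. I would then invoke Voight's enumeration of totally real fields \cite{voight-fields}: if this final bound lies below the minimal root discriminant among all totally real fields of degree $9$, then no such $k$ can exist, and we conclude $n \neq 9$.

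The principal obstacle is that the initial bound $h(k,2,B) \leq 2^{10}$ is substantially larger than the $2^4$ available in the degree-$10$ case, so the first pass through \eqref{equation:volumeinequality} gives only a crude bound on $\delta$, and several additional rounds of bootstrapping are required before the class-number and narrow-class-field discriminant bounds engage at all. The delicacy lies in the circular dependence among $\delta$, $h(k,2,B)$, and $m$: reducing $h(k,2,B)$ requires $\delta$ already small enough to trigger the class-number bound, while shrinking $\delta$ in \eqref{equation:volumeinequality} requires small $h(k,2,B)$ and $m$. One must check that the iteration genuinely converges---that at each stage the relevant discriminant threshold is actually crossed---rather than stalling at an intermediate value of $\delta$ still compatible with a moderately large $h(k,2,B)$ or $m$.
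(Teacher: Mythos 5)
Your bootstrapping scheme reproduces the first half of the paper's argument, but on its own it cannot prove the proposition, and the concern you raise in your final paragraph is exactly where it fails: the iteration stalls. Running the loop as in Proposition \ref{proposition:not10} (initial bound $\delta \leq 42.424$ from $h(k,2,B)\leq 2^{10}$; Odlyzko bounds applied to the Hilbert class field to force $h(k,2,B)\leq 2$; Poitou bounds to cap $\omega_2(B)$ at $1$; Theorem \ref{theorem:af} together with the narrow class field to force $m\leq 3$) only reaches $\delta \leq 15.445$. The minimal root discriminant of a totally real nonic field is about $12.869$, so---unlike degree $10$, where the loop lands below the threshold $14$ and Voight's nonexistence result finishes the proof---there remain totally real fields of degree $9$ compatible with every purely numerical refinement. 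No further iteration of your tools (i)--(iii) closes this gap.

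The missing idea is that one must use the hypothesis that $\Gamma$ is \emph{torsion-free}---a hypothesis your proposal never invokes, and which is absent from Proposition \ref{proposition:not10} precisely because it is not needed there. The paper's proof proceeds as follows. First, if the maximal group $\Gamma^*\supseteq\Gamma$ had covolume at most $32\pi^2/3$, then \eqref{equation:volumeinequality} together with Voight's tables (every totally real nonic field of root discriminant below $15$ has $m\leq 1$) would force $\delta\leq 12.849<12.869$, a contradiction; hence $\covol(\Gamma^*)\in\{16\pi^2,32\pi^2\}$. Next, a further round of discriminant arguments shows that $\Ram_f(B)$ consists of exactly one prime of norm $2$ and that $S=\emptyset$, so $\Gamma^*=\Gamma_{\calO}$. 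Since $2$ is inert in $\Q(\zeta_3)/\Q$, no prime of $\Ram(B)$ splits in $k(\zeta_3)/k$, and Proposition \ref{prop:torsiontheorem} then produces an element of order $3$ in $\Gamma^*$. Torsion-freeness of $\Gamma$ forces $3\mid[\Gamma^*:\Gamma]$, hence $\covol(\Gamma^*)\leq 32\pi^2/3$, contradicting the first step. Without this torsion analysis the argument does not terminate, so as written your proof has a genuine gap.
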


\begin{proof}
We know from Corollary \ref{corollary:nandhbounds} that $h(k,2,B) \leq 2^{10}=1024$. The trivial bounds $\omega_2(B) \leq 9$, $m \leq 8$ and $\zeta_k(2) \geq (4/3)^{\omega_2(B)}$ show, via \eqref{equation:volumeinequality}, that $\delta \leq 42.424$. A totally real field of degree at least $288=9\cdot 2^5$ must have root discriminant at least $50.176$, hence $h(k,2,B) \leq h\leq 2^4$ and $\delta \leq 31.176$. A totally real field of degree $9$ containing at least $6$ primes of norm $2$ must have root discriminant at least $33.182$, hence $\omega_2(B)\leq 5$ and $\delta\leq 27.647$. Applying the Odlyzko bounds to the Hilbert class field of $k$ shows that $h(k,2,B) \leq h \leq 2$. Theorem \ref{theorem:af} now implies that $m \leq 5$, allowing us to conclude that $\delta \leq 20.317$. Repeating these arguments and applying the Odlzyko bounds to the narrow class field of $k$ (a number field of degree $9 \cdot 2^{m}$) shows that $\omega_2(B) \leq 1$, $h = 1$, $m \leq 3$ and $\delta \leq 15.445$.

Let $\Gamma^* \in \mathscr C(k,B)$ be a maximal arithmetic subgroup containing $\Gamma$ and suppose that $\Gamma^*$ has covolume at most $32\pi^2/3$. In this case \eqref{equation:volumeinequality} shows that $\delta \leq 14.238$. All totally real fields of degree $9$ and root discriminant less than $15$ were enumerated by Voight \cite{voight-tables}, and an easy computation shows that all satisfy $m\leq 1$. This improves our bound to $\delta \leq 12.849$. As every totally real field of degree $9$ has root discriminant at least $12.869$, we conclude that the covolume of $\Gamma^*$ is either $32\pi^2$ or $16\pi^2$. To conclude our proof it now suffices to show that $\Gamma^*$ contains an element of order $3$.

We first show that $B$ must be ramified at precisely one prime of $k$ of norm $2$ and at no other finite primes. Suppose not. Poitou's bounds imply that a totally real field of degree $9$ containing primes of norms $2$ and $3$ must have root discriminant at least $15.685$. Since we already showed that $\delta \leq 15.445$, we conclude that either $B$ ramifies at a prime of norm $2$ and two primes of norm at least $4$, or else $B$ ramifies at no primes of norm $2$ and a prime of norm at least $3$.

If $B$ ramifies at a prime of norm $2$, we conclude from \eqref{equation:volumeinequality} that $\delta \leq 14.545$. We already saw that all totally real fields of degree $9$ with root discriminant less than $15$ satisfy $m \leq 1$. This allows us to improve our bound to $\delta \leq 13.938$. No totally real field of degree $9$ with root discriminant less than $13.938$ contains a prime of norm $2$, which is a contradiction.

Now, suppose that $B$ ramifies at no primes of norm $2$. It therefore ramifies at a prime of norm at least $3$. Equation \eqref{equation:volumeinequality} shows that $\delta \leq 14.988$, hence $m \leq 1$ (as noted above). This improves our bound to $\delta \leq 13.525$. There are only two totally real fields of degree $9$ with root discriminant this small, and neither contains a prime of norm less than $19$. Therefore $\delta \leq 11.494$. However, there are no totally real fields of degree $9$ and root discriminant less than $11.494$. This contradiction proves our claim that $B$ is ramified at precisely one prime of norm $2$ and no other finite primes of $k$.

We now show that $\Gamma^*$ contains an element of order $3$. Since $\Gamma^*$ is a maximal arithmetic subgroup it is of the form $\Gamma_{S,\calO}$ for some finite set $S$ of primes of $k$ (disjoint from $\Ram_f(B)$) and some maximal order $\calO$ of $B$. We claim that $S = \emptyset$. Indeed, if $S$ is not empty then the Poitou bounds yield a contradiction to the fact that $\delta \leq 15.445$ unless $S$ contains a prime of norm at least $4$. Equations \eqref{equation:volumeformulaequality} and \eqref{equation:volumeinequality} allow us to find a contradiction by arguing exactly as above. We can therefore assume that $\Gamma^* = \Gamma_{\calO}$ for some maximal order $\calO$ of $B$. Since $B$ is ramified at precisely one prime of norm $2$ and no other finite primes of $k$, our claim that $\Gamma^*$ contains an element of order $3$ follows from Theorem \ref{prop:torsiontheorem} and the fact that $2$ is inert in $\Q(\zeta_3) / \Q$.
\end{proof}

\begin{proposition}\label{proposition:not8}
If $n=8$ and $\Gamma \in \scrC(k,B)$ has covolume at most $32\pi^2$ then $\delta\leq 14.262$.
\end{proposition}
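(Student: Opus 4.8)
The plan is to follow the same iterative tightening scheme used in the proofs of Propositions \ref{proposition:not10} and \ref{proposition:not9}, but now for degree $n = 8$. The starting point is Corollary \ref{corollary:nandhbounds}, which for $n = 8$ gives $h(k,2,B) \leq 2^{12}$. Combining this with the trivial bounds $\omega_2(B) \leq 8$, $m \leq 7$, $\zeta_k(2) \geq (4/3)^{\omega_2(B)}$, and $\prod_{\frakp \in \Ram_f(B)} (\N(\frakp)-1)/2 \geq (1/2)^{\omega_2(B)}$, I would extract an initial root discriminant bound from the covolume inequality \eqref{equation:volumeinequality} under the hypothesis $\covol(\Gamma) \leq 32\pi^2$. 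This produces a first crude bound on $\delta$, which is then improved by bootstrapping.

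The iteration proceeds by feeding each root discriminant bound into the Odlyzko--Poitou discriminant estimates \cite{Odlyzko-bounds, Poitou} to constrain the arithmetic invariants, and then feeding the sharpened invariants back into \eqref{equation:volumeinequality} to sharpen $\delta$. Concretely, I would first use the Odlyzko bounds to cap the number of small-norm primes that can ramify $B$, thereby reducing $\omega_2(B)$; apply \eqref{equation:volumeinequality} again to lower $\delta$; then apply the Odlyzko bounds to the Hilbert class field (a field of degree $8h$) to force the class number $h$, and hence $h(k,2,B)$, down to a small power of two. At this stage Theorem \ref{theorem:af} of Armitage--Fr\"ohlich bounds $m \leq \lfloor n/2 \rfloor + h_2 = 4 + h_2$, and once $h_2$ is pinned down one gets a bound on $m$. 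Applying the Odlyzko bounds to the \emph{narrow} class field, which has degree $8 \cdot h \cdot 2^m$, then forces $m$ down further whenever the current bound on $\delta$ is incompatible with the degree of that field. Repeating this loop, each pass alternately tightening $\omega_2(B)$, $h(k,2,B)$, $m$, and $\delta$, should drive $\delta$ below $14.262$.

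The main obstacle I anticipate is bookkeeping and convergence of the iteration rather than any conceptual difficulty: one must track which invariant is being improved at each step and verify that the numerical Odlyzko/Poitou thresholds genuinely contradict the current $\delta$ bound before lowering the invariant. The delicate point is the interplay between the covolume bound and the narrow class field degree, since a large $m$ inflates the degree $8 \cdot h \cdot 2^m$ of the narrow class field and hence raises the Odlyzko lower bound on its root discriminant, which equals that of $k$; the tension between this lower bound and the upper bound from \eqref{equation:volumeinequality} is what collapses $m$. I would need to be careful that, unlike the degree $9$ and $10$ cases, no appeal to an explicit field enumeration (e.g.\ \cite{voight-tables, voight-fields}) is required to finish, or else invoke such tables at the very end to eliminate any residual cases.

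Since the statement only asserts $\delta \leq 14.262$ rather than the nonexistence conclusion obtained in lower degrees, the iteration may terminate earlier: once the covolume inequality \eqref{equation:volumeinequality}, combined with the final reduced values of $\omega_2(B)$, $h(k,2,B)$, and $m$, yields $\delta \leq 14.262$, the proof is complete without needing to rule the degree out entirely. Thus the endpoint is simply the last application of \eqref{equation:volumeinequality} after the arithmetic invariants have been minimized, and I expect the final bound $14.262$ to emerge directly from that inequality with $\omega_2(B) = 0$, $h(k,2,B) = 1$, and the smallest admissible value of $m$.
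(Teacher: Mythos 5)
Your overall bootstrapping philosophy is right, but the specific machinery you import from the degree $9$ and $10$ cases does not reach the constant $14.262$, and the paper's proof of this proposition in fact uses a different intermediate object. The final bound $14.262$ comes from \eqref{equation:volumeformulaequality} with $[K(B):k]\leq 2$ and $\omega_2(B)\leq 1$ (one checks $d^{3/2}\leq 12\,(4\pi^2)^8$, whence $\delta\leq 14.262$). Your plan instead works with \eqref{equation:volumeinequality}, i.e.\ with the product $2^m h(k,2,B)$, which is only an upper bound for $[K(B):k]$, and proposes to shrink $m$ and $h(k,2,B)$ separately via the Hilbert class field, Armitage--Fr\"ohlich, and the narrow class field. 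This is lossier in two ways. First, since $m\geq 1$ always for totally real $k$, you would need to force $m=1$ and $h(k,2,B)=1$ to recover the factor of $2$ in the denominator; but the narrow class field, of degree $8h\cdot 2^m$, is ramified at real places and so has uncontrolled (possibly totally complex) signature, and the signature-free Odlyzko bounds at degrees $32$ or $64$ are far too weak to contradict a root discriminant around $15$--$17$. Armitage--Fr\"ohlich only gives $m\leq 4+h_2$, so your iteration plausibly stalls at $2^m h(k,2,B)\geq 4$ or $8$, which yields only $\delta\leq 15.1$ or $\delta\leq 16$, not $14.262$.

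The missing idea is to apply the discriminant bounds directly to the field $K(B)$ itself. Since $K(B)/k$ is unramified at all finite places, $K(B)$ has the same root discriminant as $k$; since the two real places of $k$ splitting $B$ do not lie in $\Ram_\infty(B)$, they are unramified in $K(B)/k$, so $K(B)$ has at least $2[K(B):k]$ real places out of degree $8[K(B):k]$; and since every prime of $\Ram_f(B)$ splits completely in $K(B)$, each prime of norm $2$ ramifying $B$ produces $[K(B):k]$ primes of norm $2$ in $K(B)$. These three facts let you apply the signature-interpolated Odlyzko bounds and the Poitou refinements for fields with many small-norm primes to $K(B)$, which is exactly what collapses $[K(B):k]$ from $2^{19}$ down to $2$ (and $\omega_2(B)$ down to $1$) in successive passes. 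Without replacing your narrow-class-field step by this argument on $K(B)$, the iteration does not close.
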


\begin{proof}
We know from Corollary \ref{corollary:nandhbounds} that $h(k,2,B) \leq 2^{12}=4096$, hence $[K(B):k]\leq 2^{m}h(k,2,B)\leq 2^7\cdot 2^{12}=2^{19}$. From this we conclude, via \eqref{equation:volumeformulaequality} and the trivial estimates $\omega_2(B) \leq 8$ and $\zeta_k(2) \geq (4/3)^{\omega_2(B)}$, that $\delta\leq 51.102$. The Odlyzko bounds imply that every totally real number field of degree at least $8\cdot 2^6$ has root discriminant greater than $53.494$, hence $h(k,2,B)\leq 2^5$ and $[K(B):k]\leq 2^{12}$. Assume that $[K(B):k]=2^{12}$ and recall that $K(B)$ is unramified at all places of $k$ not lying in $\Ram_\infty(B)$. In particular there are at least two real places of $B$ which are unramified in the extension $K(B)/k$, hence $K(B)$ has at least $2\cdot [K(B):k]$ real places. Applying the Odlyzko bounds to this field (which has the same root discriminant as $k$) shows that $\delta>62.785$ unless $\omega_2(B)\leq 2$. When $\omega_2(B)\leq 2$ equation \eqref{equation:volumeformulaequality} shows that $\delta\leq 27.848$. Every number field of degree $8\cdot 2^{12}$ with at least $2\cdot 2^{12}$ real places has root discriminant greater than $28.535$. This shows that $[K(B):k]\neq 2^{12}$. An identical argument shows that $[K(B):k]\leq 2$, $\omega_2(B)\leq 1$ and $\delta\leq 14.262$.\end{proof}

\begin{proposition}\label{proposition:not7}
If $n=7$ and $\Gamma \in \scrC(k,B)$ has covolume at most $32\pi^2$ then $\delta\leq 15.269$.
\end{proposition}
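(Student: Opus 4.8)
The plan is to follow the same bootstrapping strategy employed in Propositions \ref{proposition:not10}, \ref{proposition:not9}, and \ref{proposition:not8}: repeatedly play the volume inequality \eqref{equation:volumeinequality} against the Odlyzko--Poitou discriminant bounds for $k$ and its associated fields, driving down the admissible values of $\delta$, $\omega_2(B)$, $m$, and $h(k,2,B)$ in alternation until the bound $\delta \leq 15.269$ is forced.

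First I would record the starting data. Corollary \ref{corollary:nandhbounds} gives $h(k,2,B) \leq 2^{15}$ for $n = 7$. Combined with the trivial bounds $\omega_2(B) \leq 7$, $m \leq 6$, $\zeta_k(2) \geq (4/3)^{\omega_2(B)}$, and $\prod_{\frakp \in \Ram_f(B)} (\N(\frakp)-1)/2 \geq (1/2)^{\omega_2(B)}$, inequality \eqref{equation:volumeinequality} yields a first crude upper bound on $\delta$. This bound will be far above $15.269$ and must be improved iteratively.

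Next I would iterate, where each pass extracts a sharper constraint from the discriminant bounds applied to a carefully chosen auxiliary field. Since $h(k,2,B)$ divides the class number $h$, applying the Odlyzko bounds to a totally real field of large degree forces $h(k,2,B)$ small; applying them to $k$ itself under the assumption that it contains $\omega_2(B)$ primes of norm $2$ (and possibly a prime of norm $3$) bounds $\omega_2(B)$ via Poitou; and applying them to $K(B)$, which has at least $2[K(B):k]$ real places, gives a further squeeze on $[K(B):k] = 2^m h(k,2,B)$. The Armitage--Fr{\"o}hlich theorem (Theorem \ref{theorem:af}) caps $m \leq 3 + h_2$, and applying the discriminant bounds to the narrow class field of degree $7 h 2^m$ then constrains $m$ directly. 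Feeding each improved parameter back into \eqref{equation:volumeinequality} tightens $\delta$, which rules out additional small-norm primes and shrinks $h(k,2,B)$ and $m$ in turn; I would repeat this loop until it stabilizes.

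The main obstacle will be the bookkeeping required to make the iteration terminate at exactly $15.269$ rather than a weaker value. Degree $7$ sits close enough to the final threshold that the order in which one improves $\omega_2(B)$, $m$, and $h(k,2,B)$ matters, and the sharpest available Odlyzko--Poitou bounds must be invoked at each stage; at the endgame one likely appeals to the tables of small-discriminant totally real septic fields (as in \cite{voight-tables}, used already in the proof of Proposition \ref{proposition:not9}) to verify that no field of degree $7$ simultaneously realizes the borderline values of all four parameters. Confirming that this delicate simultaneous exclusion collapses the bound to $15.269$ is the crux.
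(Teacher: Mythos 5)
Your plan is essentially the paper's proof: start from $h(k,2,B)\leq 2^{15}$ (Corollary \ref{corollary:nandhbounds}), feed the trivial estimates on $\omega_2(B)$, $m$, and $\zeta_k(2)$ into the volume formula to get a crude bound $\delta\leq 69.348$, and then iterate the volume bound against the Odlyzko--Poitou discriminant bounds applied to the class field attached to $h(k,2,B)$ and to $K(B)$ until the parameters stabilize. The only difference is that for $n=7$ the paper's iteration never needs Armitage--Fr\"ohlich, the narrow class field, or tables of septic fields (those enter only in degrees $9$ and $10$); applying Odlyzko--Poitou to $K(B)$, which has at least $2[K(B):k]$ real places, already forces $[K(B):k]\leq 4$, and $\delta\leq 15.269$ then drops out of the volume formula directly.
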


\begin{proof}
We know from Corollary \ref{corollary:nandhbounds} that $h(k,2,B) \leq 2^{15}=32768$, hence $[K(B):k]\leq 2^{m}h(k,2,B)\leq 2^6\cdot 2^{15}=2^{21}$. From this we conclude, via \eqref{equation:volumeformulaequality} and the trivial estimates $\omega_2(B) \leq 7$ and $\zeta_k(2) \geq (4/3)^{\omega_2(B)}$, that $\delta\leq 69.348$. If $h(k,2,B)=2^{15}$ then the class field associated to $h(k,2,B)$ is totally real of degree $7\cdot 2^{15}$ and has at least $\omega_2(B)\cdot 2^{15}$ primes of norm $2$. If $\omega_2(B)\geq 1$ then this field has root discriminant $\delta\geq 78.350$, a contradiction. On the other hand, if $\omega_2(B)=0$ then \eqref{equation:volumeformulaequality} shows that $\delta\leq 52.923$ while applying the Odlyzko bounds to the class field associated to $h(k,2,B)$ shows that $\delta>60.702$. Therefore $h(k,2,B)\leq 2^{14}$ and $[K(B):k]\leq 2^{20}$. Arguing as above and applying the Odlyzko-Poitou bounds to $K(B)$ show that $[K(B):k]\leq 4$ and $\delta\leq 15.269$.\end{proof}

\begin{proposition}\label{proposition:not6}
If $n=6$ and $\Gamma \in \scrC(k,B)$ has covolume at most $32\pi^2$ then $\delta\leq 15.986$.
\end{proposition}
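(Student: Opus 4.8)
The plan is to follow the bootstrapping strategy of Propositions \ref{proposition:not8} and \ref{proposition:not7}: begin with the crude bounds supplied by Corollary \ref{corollary:nandhbounds} together with the trivial estimates, substitute them into the volume formula \eqref{equation:volumeformulaequality}, and then iteratively sharpen the arithmetic invariants $h(k,2,B)$, $\omega_2(B)$, $m$, and $[K(B):k]$ by applying the Odlyzko--Poitou discriminant bounds to suitable auxiliary fields, each refinement tightening the bound on $\delta$ until it stabilizes at the claimed threshold.

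First I would record the starting data for $n=6$. Corollary \ref{corollary:nandhbounds} gives $h(k,2,B)\leq 2^{12}$, and with the trivial bound $m\leq 5$ we obtain $[K(B):k]\leq 2^{m}h(k,2,B)\leq 2^{17}$. Feeding this into \eqref{equation:volumeformulaequality} along with $\omega_2(B)\leq 6$ and $\zeta_k(2)\geq (4/3)^{\omega_2(B)}$ produces a first (large) bound on $\delta$, which then seeds the refinement.

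Next I would iterate exactly as in the $n=7,8$ cases, exploiting that $K(B)$ is unramified outside $\Ram_\infty(B)$: it is a totally real field of degree $6[K(B):k]$ over $\Q$, with at least $2[K(B):k]$ real places coming from the split infinite places and at least $\omega_2(B)[K(B):k]$ primes of norm $2$. Whenever the current bound on $\delta$ violates the Odlyzko bound (or Poitou's refinement for norm-$2$ primes) applied to $K(B)$, the quantities $[K(B):k]$ and $\omega_2(B)$ must drop, and the improved invariants are fed back through \eqref{equation:volumeformulaequality} to shrink $\delta$. Interleaving this with the Armitage--Fr\"ohlich bound (Theorem \ref{theorem:af}) to control $m$ and with applications of the discriminant bounds to the Hilbert and narrow class fields to control $h(k,2,B)$, I expect the invariants to stabilize at small values---plausibly $[K(B):k]\leq 2$ and $\omega_2(B)\leq 1$, as in the $n=8$ case---at which point \eqref{equation:volumeinequality} delivers $\delta\leq 15.986$.

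The main obstacle will be the bookkeeping of the iteration. Since the conclusion is a precise numerical threshold rather than an outright contradiction ruling out the degree (as happens for $n=9,10$), one must carefully track which application of the discriminant bounds is binding at each stage and verify that the bootstrap genuinely terminates at $15.986$ rather than stalling prematurely. If the invariants stabilize before the desired bound is reached, it may be necessary to consult tables of totally real sextic fields of small root discriminant to rule out the remaining configurations, as was done in the proof of Proposition \ref{proposition:not9}.
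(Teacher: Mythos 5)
Your plan is essentially the paper's proof: the paper starts from $h(k,2,B)\leq 2^{12}$, $[K(B):k]\leq 2^{17}$, $\omega_2(B)\leq 6$, and $\zeta_k(2)\geq (4/3)^{\omega_2(B)}$ to get $\delta\leq 65.636$, then iteratively rules out large values of $h(k,2,B)$ and $\omega_2(B)$ by applying the Odlyzko--Poitou bounds to the (totally real, unramified) class field attached to $h(k,2,B)$ and to $K(B)$, exactly as you describe. The only discrepancy is the stabilization point---the paper ends at $[K(B):k]\leq 4$ rather than your guessed $[K(B):k]\leq 2$, which is covered by your hedge and still yields $\delta\leq 15.986$ via \eqref{equation:volumeformulaequality}; no field tables or Armitage--Fr\"ohlich input turn out to be needed here.
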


\begin{proof}
We know from Corollary \ref{corollary:nandhbounds} that $h(k,2,B) \leq 2^{12}=4096$, hence $[K(B):k]\leq 2^{m}h(k,2,B)\leq 2^5\cdot 2^{12}=2^{17}$. From this we conclude, via \eqref{equation:volumeformulaequality} and the trivial estimates $\omega_2(B) \leq 6$ and $\zeta_k(2) \geq (4/3)^{\omega_2(B)}$, that $\delta\leq 65.636$. If $h(k,2,B)=2^{12}$ then the class field associated to $h(k,2,B)$ is totally real of degree $6\cdot 2^{12}$ and has at least $\omega_2(B)\cdot 2^{12}$ primes of norm $2$. If $\omega_2(B)\geq 1$ then this field has root discriminant $\delta\geq 81.118$, a contradiction. On the other hand, if $\omega_2(B)=0$ then \eqref{equation:volumeformulaequality} shows that $\delta\leq 50.090$ while applying the Odlyzko bounds to the class field associated to $h(k,2,B)$ shows that $\delta>60.241$. Therefore $h(k,2,B)\leq 2^{11}$ and $[K(B):k]\leq 2^{16}$. Identical arguments show that $[K(B):k]\leq 4$ and $\delta\leq 15.986$.\end{proof}

\begin{proposition}\label{proposition:not5}
If $n=5$ and $\Gamma \in \scrC(k,B)$ has covolume at most $32\pi^2$ then $\delta\leq 31.823$.
\end{proposition}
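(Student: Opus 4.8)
The plan is to follow the same bootstrapping strategy employed in the proofs of Propositions \ref{proposition:not6}--\ref{proposition:not8}, now specialized to $n = 5$. The starting point is Corollary \ref{corollary:nandhbounds}, which gives $h(k,2,B) \leq 2^{14}$; combined with the trivial bound $m \leq 4$ this yields $[K(B):k] \leq 2^m h(k,2,B) \leq 2^{18}$. Feeding this into the covolume formula \eqref{equation:volumeformulaequality}, together with the trivial estimates $\omega_2(B) \leq 5$, $\zeta_k(2) \geq (4/3)^{\omega_2(B)}$, and $\prod_{\frakp \in \Ram_f(B)} (\N(\frakp)-1)/2 \geq (1/2)^{\omega_2(B)}$, produces a crude first bound on the root discriminant $\delta$.

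The heart of the argument is an iterative refinement identical in spirit to the higher-degree cases. At each stage I would apply the Odlyzko--Poitou discriminant bounds to an auxiliary extension of $k$: the field $K(B)$, which is totally real of degree $n[K(B):k]$ with at least $2[K(B):k]$ real places (being unramified outside $\Ram_\infty(B)$); the Hilbert class field (degree $nh$); and the narrow class field (degree $nh2^m$). Since a field of large degree is forced to have large root discriminant, each application drives one of $[K(B):k]$, $h(k,2,B)$, $m$, or $\omega_2(B)$ downward, and resubstituting into \eqref{equation:volumeformulaequality} sharpens the bound on $\delta$. I would interleave these with the Armitage--Fr{\"o}hlich inequality (Theorem \ref{theorem:af}), which for $n = 5$ reads $m \leq 2 + h_2$, to control $m$ directly, and with Poitou's bounds to eliminate configurations having too many primes of small norm in $\Ram_f(B)$, progressively forcing $\omega_2(B)$ to be small. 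Iterating until the coupled quantities stabilize should yield a small value of $[K(B):k]$ and the stated bound $\delta \leq 31.823$.

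The main obstacle is that $n = 5$ carries the \emph{largest} initial class-number bound of all the degrees under consideration ($2^{14}$, versus $2^{12}$ for $n = 6$ and only $2^4$ for $n = 10$), so the covolume inequality alone is weak and the bootstrap requires more rounds before the interdependent parameters $[K(B):k]$, $h(k,2,B)$, $m$, $\omega_2(B)$, and $\delta$ settle. The delicate part is choosing which auxiliary field to invoke at each round, and in what order, so that the discriminant bounds bite at every step; unlike the cases $n = 9, 10$, the small degree means the iteration terminates at a finite bound rather than in a contradiction, so one must verify that no further application of the discriminant bounds improves $\delta$ below the value $31.823$ recorded in \eqref{rootdiscbounds}.
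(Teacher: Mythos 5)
Your outline captures the general bootstrapping flavor of Propositions \ref{proposition:not6}--\ref{proposition:not8}, but it omits the one ingredient that actually makes the $n=5$ case close: a parity argument on $\Ram(B)$. After the Odlyzko--Poitou bounds force $\omega_2(B)=0$ (at the level $h(k,2,B)=2^{14}$ this gives $\delta \leq 73.619$), the iteration you describe stalls: the class field associated to $h(k,2,B)$ is totally real of degree $5\cdot 2^{14}$ with the \emph{same} root discriminant as $k$, and the Odlyzko lower bound for such a field is only about $60.571$, which does not contradict $\delta\leq 73.619$. Neither the Hilbert/narrow class fields nor Armitage--Fr\"ohlich (which for $n=5$ gives $m\leq 2+h_2$, no better than the trivial $m\leq 4$ when $h_2$ is large) breaks this impasse. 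The paper's key observation is that for $n=5$ the algebra $B$ is ramified at exactly $3$ real places, an odd number, so $\Ram_f(B)\neq\emptyset$. Since the finite ramified primes split completely in the class field associated to $h(k,2,B)$, the Odlyzko--Poitou bounds force any such prime to have norm at least $11$, and the resulting factor $(\N(\frakp)-1)/2\geq 5$ in \eqref{equation:volumeformulaequality} pushes $\delta$ down to $59.401 < 60.571$ -- a genuine contradiction that eliminates $h(k,2,B)=2^{14}$. Repeating this for each power of $2$ down to $2^3$ yields $h(k,2,B)\leq 2^2$ and hence $\delta\leq 31.823$.

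Relatedly, your closing remark that ``the iteration terminates at a finite bound rather than in a contradiction'' misdescribes the structure of the argument: the proof \emph{is} a cascade of contradictions, one for each hypothesized value $h(k,2,B)=2^x$ with $3\leq x\leq 14$, and only the final surviving case $h(k,2,B)\leq 2^2$ is fed back into \eqref{equation:volumeinequality} to produce the stated bound. Without the forced finite ramified prime and the lower bound on its norm, your version of the argument would terminate with $\delta$ bounded only by roughly $73$, well short of $31.823$.
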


\begin{proof}
We know from Corollary \ref{corollary:nandhbounds} that $h(k,2,B) \leq 2^{14}=16384$, hence $[K(B):k]\leq 2^{m}h(k,2,B)\leq 2^4\cdot 2^{14}=2^{18}$. From this we conclude, via \eqref{equation:volumeformulaequality} and the trivial estimates $\omega_2(B) \leq 5$ and $\zeta_k(2) \geq (4/3)^{\omega_2(B)}$, that $\delta\leq 96.468$. If $h(k,2,B)=2^{14}$ then the class field associated to $h(k,2,B)$ is totally real of degree $5\cdot 2^{14}$ and has at least $\omega_2(B)\cdot 2^{14}$ primes of norm $2$. If $\omega_2(B)=5$ then applying the Odlyzko bounds to the class field associated to $h(k,2,B)$ shows that $\delta\geq 503.890$, a contradiction. Similar arguments show that we must have $\omega_2(B)=0$, in which case $\delta\leq 73.619$. Since $B$ must ramify at a finite number of primes having even cardinality, a finite prime of $k$ must ramify in $B$. The Odlyzko bounds, applied once again to the class field associated to $h(k,2,B)$, show that this prime must have norm at least $11$. Equation \eqref{equation:volumeformulaequality} now shows that $\delta\leq 59.401$. This is a contradiction, as the Odlyzko bounds show that a totally real number field of degree $5\cdot 2^{14}$ must have root discriminant at least $60.571$. This shows that $h(k,2,B)\leq 2^{13}$. Identical arguments (for $h(k,2,B)\in\{2^x : x\leq 13\}$) show that $h(k,2,B)\leq 2^2$, in which case $\delta\leq 31.823$. \end{proof}

\begin{proposition}\label{proposition:not4}
If $n=4$ and $\Gamma \in \scrC(k,B)$ is torsion-free, stable, and has covolume at most $32\pi^2$ then $\delta\leq 92.754$.
\end{proposition}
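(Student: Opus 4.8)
The plan is to follow the bootstrapping scheme of Propositions~\ref{proposition:not10}--\ref{proposition:not5}: alternately feed the covolume inequality \eqref{equation:volumeinequality} and the Odlyzko--Poitou discriminant bounds (applied to suitable class fields of $k$) against each other, shrinking the admissible values of $h(k,2,B)$, $\omega_2(B)$ and $m$ at each pass and re-substituting to sharpen the bound on $\delta$. Since $\Gamma$ is stable we may use the stable covolume formula \eqref{equation:volumeformulaequality}, and because $\Gamma_\calO$ has minimal covolume in its commensurability class we have $\covol(\Gamma_\calO)\le\covol(\Gamma)\le 32\pi^2$. The one structural wrinkle at $n=4$ is that Corollary~\ref{corollary:nandhbounds} supplies no a priori bound on $h(k,2,B)$ (its table begins at $n=5$), so such a bound has to be produced here directly.

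First I would record the elementary constraints $\omega_2(B)\le 4$, $\zeta_k(2)\ge (4/3)^{\omega_2(B)}$, $\prod_{\frakp\in\Ram_f(B)}(\N\frakp-1)/2\ge (1/2)^{\omega_2(B)}$, and, via Theorem~\ref{theorem:af}, $m\le 2+h_2$ (together with the trivial $m\le 4$), where $h_2$ is the $2$-rank of $\Cl(k)$. Substituting $d=\delta^4$ into \eqref{equation:volumeinequality} then bounds $\delta^6$ by a constant times $2^m h(k,2,B)$, so the entire problem reduces to capping the product $2^m h(k,2,B)$; the target value $\delta\le 92.754$ corresponds precisely to $2^m h(k,2,B)\le 2^{16}$.

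To cap this product I would invoke the discriminant bounds on the relevant class fields, all of which share the root discriminant $\delta$ of $k$. The maximal unramified elementary $2$-extension in which $\Ram_f(B)$ splits completely is totally real of degree $4\,h(k,2,B)$ and carries $\omega_2(B)\,h(k,2,B)$ primes of norm $2$; if $\omega_2(B)>0$ and $h(k,2,B)$ is large, the Poitou bound forces $\delta$ far above any value allowed by the volume inequality, which pins down $\omega_2(B)$. With $\omega_2(B)$ controlled, applying the Odlyzko bounds to this genus field and to the narrow class field (of degree $4\,h\,2^m$, with its partly complex signature) bounds $h(k,2,B)$ and $m$ from above. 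Iterating the loop until no parameter can be lowered leaves $2^m h(k,2,B)\le 2^{16}$, and one final substitution into \eqref{equation:volumeinequality} gives $\delta\le 92.754$.

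The main obstacle is that in degree $4$ the Odlyzko--Poitou bounds are weak: even class fields of very large degree are only forced to have root discriminant in the range $40$--$60$, so each iteration buys very little and the process saturates at a value ($92.754$) far larger than for $n\ge 5$. Consequently the delicate work is bookkeeping---tracking the exact signature of each auxiliary field (the totally real genus field versus the partly complex narrow class field) so as to apply the sharpest available bound at each step, exploiting the norm-$2$ primes to kill $\omega_2(B)$ first, and verifying that the admissible window for $(\,h(k,2,B),m,\omega_2(B)\,)$ genuinely closes at $2^m h(k,2,B)\le 2^{16}$ rather than merely shrinking.
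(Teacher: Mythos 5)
Your plan has a genuine gap at its central step: in degree $4$ the Odlyzko--Poitou/volume-inequality bootstrap does \emph{not} close, so it cannot produce the bound on $h(k,2,B)$ that everything else hinges on. Quantitatively, \eqref{equation:volumeinequality} with $n=4$ gives $\delta \lesssim 24\cdot\bigl(2^m h(k,2,B)\bigr)^{1/6}$, which grows without bound in $h(k,2,B)$, while the unconditional Odlyzko bound for the totally real class fields you propose to use saturates near $60.8$ no matter how large their degree is. Already for $2$-class-rank around $5$ the permitted $\delta$ exceeds what the class-field discriminant bound can contradict, and the discrepancy only widens; so the loop never "genuinely closes at $2^m h(k,2,B)\le 2^{16}$" --- it diverges. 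This is exactly the failure mode the paper is designed around: the second set of constants in Theorem \ref{theorem:lemma43generalization}/Corollary \ref{corollary:lemma43productofplanes} (a Brauer--Siegel-type bound through the regulator, which the paper notes "has no analogue in previous work") is what actually yields $h(k,2,B)\le 2^{14}$ here, and it is applicable only after the covolume has been cut from $32\pi^2$ to at most $16\pi^2$.

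That reduction is where the hypotheses you never invoke become essential. The paper's proof splits on whether $B$ ramifies at a finite prime. If it does, the Fuchsian gambit (Proposition \ref{prop:quatgambit}) bounds $[\Gamma^1:\Gamma^*]$ and hence $\covol(\Gamma^1)$, and Borel's formula for the \emph{norm-one} group has no $[K(B):k]$ or $h(k,2,B)$ in the denominator at all, giving $\delta\le 46.377$ (resp.\ $111.391$, then improved) with no class-number input. If $B$ is unramified at all finite primes, Corollary \ref{corollary:23torsion} gives elements of orders $2$ and $3$ in $\Gamma_\calO$, so torsion-freeness of $\Gamma$ forces $\covol(\Gamma_\calO)\le 32\pi^2/6$ (and $\le 16\pi^2$ in the other subcases, with a residual $S=\{\frakp\}$, $\N\frakp=2$ case again handled by the gambit); only then does Corollary \ref{corollary:lemma43productofplanes} bite and deliver $h(k,2,B)\le 2^{14}$ and $\delta\le 92.754$. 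Without using torsion-freeness you are stuck at covolume $32\pi^2$, where even the Brauer--Siegel bound gives no constraint on $h(k,2,B)$ --- which is precisely why this proposition, unlike Propositions \ref{proposition:not5}--\ref{proposition:not8}, carries the hypotheses "torsion-free, stable."
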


\begin{proof}
We begin with the case in which $B$ is ramified at a finite prime $\mathfrak p$ of $k$. Let $\Gamma^*$ be a maximal arithmetic group containing $\Gamma$ and $\Gamma^1$ be the group of norm one units in the Eichler order associated to $\Gamma^*$. Proposition \ref{prop:quatgambit} shows that $\covol(\Gamma^1)\leq 2^{12}\cdot 32\pi^2$ if $N\mathfrak p\geq 3$ and that $\covol(\Gamma^1)\leq 2^{20}\cdot 32\pi^2$ when $N\mathfrak p=2$. In the former case \cite[\S 7.3]{borel-commensurability} shows that \[2^{12} \geq \frac{d^{3/2}}{(4\pi^2)^{n}},\] whereas in the latter case we obtain (employing the estimate $\zeta_k(2)\geq 4/3$, which follows from the Euler product expansion of $\zeta_k(s)$) \[2^{20} \geq \frac{d^{3/2}(4/3)}{(4\pi^2)^{n}}.\] In the former case we deduce that $\delta\leq 46.377$ and in the latter case we have $\delta\leq 111.391$. Since a quaternion algebra over $k$ must ramify at an even number of primes, there exists a prime $\mathfrak q\neq \mathfrak p$ such that $B$ ramifies at $\mathfrak q$ as well. If $N\mathfrak q>2$ then we have $\delta\leq 46.377$ as above. Suppose now that $N\mathfrak q=2$. In this case the class field associated to $h(k,2,B)$ is totally real of degree $4\cdot h(k,2,B)$ and has at least $2h(k,2,B)$ primes of norm $2$. The Odlyzko-Poitou bounds imply that $h(k,2,B)\leq 2^4$, in which case \eqref{equation:volumeformulaequality} implies that $\delta\leq 42.972$.

Finally, consider the case in which $B$ is unramified at all finite primes of $k$. In this case Corollary \ref{corollary:23torsion} shows that $\Gamma_{\mathcal O}$ contains elements of orders $2$ and $3$. If there exists a maximal order $\mathcal O$ of $B$ such that $\Gamma\subset \Gamma_{\mathcal O}$ then because $\Gamma$ is torsion-free, the covolume of $\Gamma_{\mathcal O}$ is at most $32\pi^2/6$. If $\Gamma\not\subset \Gamma_{\mathcal O}$ then $\Gamma$ must be contained in $\Gamma_{S,\mathcal O}$ for some maximal order $\mathcal O$ of $B$ and non-empty set $S$ of primes of $k$. We claim that $\covol(\Gamma_{\mathcal O})\leq 16\pi^2$ except possibly when $S=\{\mathfrak p\}$ and $N\mathfrak p=2$. Indeed, this follows from \eqref{equation:volumeformulaequality} and the fact that $\covol(\Gamma)\leq 32\pi^2$. In all of the cases considered thus far, $\covol(\Gamma_{\mathcal O})\leq 16\pi^2$, hence Corollary \ref{corollary:lemma43productofplanes} shows that $h(k,2,B)\leq 2^{14}$ and $\delta\leq 92.754$ by \eqref{equation:volumeinequality}.

We now consider the remaining case in which $\Gamma\subset \Gamma_{S,\mathcal O}$ with $S=\{\mathfrak p\}$ and $N\mathfrak p=2$. Proposition \ref{prop:quatgambit}(b) shows that in this case $\covol(\Gamma^1)\leq 2^{11}\cdot 32\pi^2$ and the proposition follows from the volume formula for $\Gamma^1$ \cite[\S 7.3]{borel-commensurability} as above. \end{proof}

\begin{proposition}\label{proposition:nandhboundssmalldegrees}
If $n\in\{2,3\}$ and $\Gamma\in\mathscr C(k,B)$ has covolume at most $32\pi^2$ then $\delta\leq 118.436$.
\end{proposition}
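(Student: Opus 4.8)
The plan is to run the same bootstrap between the volume inequality \eqref{equation:volumeinequality} and the Odlyzko--Poitou discriminant bounds that drives Propositions \ref{proposition:not10}--\ref{proposition:not4}, but to confront the feature that sets small degree apart: for $n \in \{2,3\}$ neither Corollary \ref{corollary:nandhbounds} nor Corollary \ref{corollary:lemma43productofplanes} bounds $h(k,2,B)$, because the regulator is too small to make the exponential factor in the latter effective (this is exactly the obstruction flagged in the introduction). So the first order of business is to cap $h(k,2,B)$, equivalently the $2$-rank $h_2$ of the class group of $k$, by hand. Throughout I use $\covol(\Gamma_{\calO}) \le \covol(\Gamma) \le 32\pi^2$, since $\Gamma_{\calO}$ realizes the minimal covolume in its commensurability class, and I note that $\Ram_f(B)$ is nonempty (of even cardinality when $n=2$, odd when $n=3$), so $B$ is a division algebra and the product in \eqref{equation:volumeinequality} is over a nonempty set.

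First I would rearrange \eqref{equation:volumeinequality} using the crude estimates $\zeta_k(2) \ge 1$, $\prod_{\frakp \in \Ram_f(B)} \tfrac{\N(\frakp)-1}{2} \ge 2^{-\omega_2(B)}$, $\omega_2(B) \le n$, the Armitage--Fr\"ohlich bound $m \le \lfloor n/2\rfloor + h_2$ (Theorem \ref{theorem:af}), and $h(k,2,B) \le 2^{h_2}$. Since $\delta = d^{1/n}$, this produces an inequality of the shape $\delta \le A_n\, 2^{B_n h_2}$ with explicit constants $A_n, B_n$ (here $B_n = 4/(3n)$), so that it suffices to bound $h_2$.

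To cap $h_2$ I would pass to the totally real field $L$ of degree $n\,h(k,2,B)$ cut out by the definition of $h(k,2,B)$, namely the maximal unramified elementary $2$-abelian extension of $k$ in which the primes of $\Ram_f(B)$ split completely; being unramified over $k$, it shares the root discriminant $\delta$. Exactly as in the higher-degree propositions, if some ramified prime has norm $2$ then it splits completely in $L$, producing $\omega_2(B)\,h(k,2,B)$ primes of norm $2$ there, and the Odlyzko--Poitou bounds \cite{Odlyzko-bounds, Poitou, doud} for large-degree totally real fields carrying many norm-$2$ primes contradict the ceiling from the first step; if instead $\omega_2(B)=0$, the plain root-discriminant bound applied to $L$, and to the narrow class field of degree $n h 2^{m}$ to control $m$, does the job. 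Iterating between \eqref{equation:volumeinequality} and the discriminant bounds — each improved cap on $(h(k,2,B), \omega_2(B), m)$ tightening the admissible $\delta$, which in turn lowers the admissible $h(k,2,B)$ — the invariants stabilize at small values, just as in Propositions \ref{proposition:not5}--\ref{proposition:not4}. Where $B$ is ramified at a finite prime I would, if needed, invoke the Fuchsian gambit (Proposition \ref{prop:quatgambit}) to trade that prime for a real place and bound the remaining normalizer index, as in the proof of Proposition \ref{proposition:not4}. Feeding the stabilized invariants back into \eqref{equation:volumeinequality}, now keeping $\zeta_k(2)$ and the ramification product honestly rather than by their crude estimates, yields $\delta \le 118.436$.

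The hard part will be $n=2$: the base degree is so small that the field $L$ has low degree even for moderately large $h_2$, so the bare degree-versus-root-discriminant bound does not bite, and one is forced to exploit the propagation of norm-$2$ primes into $L$ together with the parity constraint on $\Ram_f(B)$. Correctly combining these refinements to rule out large $h_2$ is where essentially all of the work lives; once the invariants are pinned down, the final numerics are routine.
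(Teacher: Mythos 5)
There is a genuine gap, and it sits exactly where you suspect. Your plan is to cap $h_2$ (hence $h(k,2,B)$) by playing the volume inequality \eqref{equation:volumeinequality} off against the Odlyzko--Poitou bounds applied to the unramified $2$-elementary extension $L$ of degree $n\,h(k,2,B)$ and to the narrow class field, iterating until the invariants stabilize. This works for $n\ge 5$ because there the admissible root discriminant is forced below the asymptotic Odlyzko threshold. For $n\in\{2,3\}$ it cannot terminate: your own first-step inequality has the shape $\delta \le A_n\,2^{B_n h_2}$, which grows \emph{exponentially} in $h_2$, whereas the Odlyzko--Poitou lower bound on the root discriminant of a totally real field of degree $n\cdot 2^{h_2}$ (even one carrying $\omega_2(B)\cdot 2^{h_2}$ primes of norm $2$) saturates at an absolute constant of order $60$. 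For $h_2$ large the two bounds are simply compatible, so no contradiction ever arises, the iteration never starts to bite, and no cap on $h_2$ is produced. The parity constraint on $\Ram_f(B)$ and the Fuchsian gambit do not change this, since they only adjust constants, not the exponential-versus-bounded mismatch.

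The paper closes this gap with a different ingredient that your proposal is missing: the explicit class number bounds $h\le \tfrac{1}{2}\sqrt{d}$ for real quadratic fields (Le, Ramar\'e) and totally real cubic fields (Louboutin). Since $h(k,2,B)\le h$, substituting $h(k,2,B)\le \tfrac{1}{2}d^{1/2}$ into \eqref{equation:volumeinequality} leaves a factor $d^{3/2}/d^{1/2}=d$ in the numerator, so the covolume bound $32\pi^2$ directly forces $d$ (hence $\delta$) to be bounded --- no iteration, no discriminant bounds, and no need to control $h_2$ by a constant at all. Together with the trivial estimates $\omega_2(B)\le n$, $m\le n-1$, $\zeta_k(2)\ge (4/3)^{\omega_2(B)}$ and $\prod_{\frakp\in\Ram_f(B)}\tfrac{\N(\frakp)-1}{2}\ge 2^{-\omega_2(B)}$, this gives $d\le 144\pi^4$ for $n=2$ and $d\le 1728\pi^6$ for $n=3$, i.e.\ $\delta\le 118.436$ in both cases. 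Your final sentence correctly identifies that ``essentially all of the work lives'' in ruling out large $h_2$ for small $n$; the resolution is not a refinement of the discriminant-bound bootstrap but the importation of a class number bound that is polynomial in $d$.
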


\begin{proof}
If $k$ is a real quadratic field or a totally real cubic field, then $h\leq \frac{1}{2}\sqrt{d}$: this follows from work of \cite{Le}, Ramare \cite[Corollary 2]{Ramare} for the quadratic case, and Louboutin \cite[Corollary 4]{Louboutin-classnumberbounds} for the cubic case. Recalling that $\omega_2(B)$ is the number of primes of $k$ that ramify in $B$ and have norm $2$, it is clear that
\[
\prod_{\frakp \in \Ram_f(B)} \frac{\N(\frakp) - 1}{2} \geq \left( \frac{1}{2} \right)^{\omega_2(B)}.
\]
By considering the Euler product expansion of $\zeta_k(s)$, one sees that $\zeta_k(2) \geq (4/3)^{\omega_2(B)}$. We also note the trivial estimates $\omega_2(B)\leq n$, $m\leq n-1$, $h(k,2,B)\leq h$. Combining all of these estimates with \eqref{equation:volumeinequality} proves the proposition.
\end{proof}

\subsection*{Enumerating maximal arithmetic subgroups containing fake quadrics}

We now know that the field of definition of a fake quadric is a totally real field of degree $2 \leq n\leq 8$. Now we describe how one enumerates all maximal arithmetic subgroups of $\Isom(\calH)$ that can contain the stable subgroup of the fundamental group of a fake quadric. All of the computations described in this section were carried out with \textsc{Magma} \cite{Magma}.

Recall that a maximal arithmetic subgroup $\Gamma_{S, \calO}$ of $\PGL_2(\mathbb R) \times \PGL_2(\mathbb R)$ defined over $k$ has covolume 
\begin{equation}\label{equation:volumeformulaequalityagain}
\covol(\Gamma_{S, \calO}) = \frac{8 \pi^2 d^{\frac{3}{2}} \zeta_k(2) \left( \prod_{\frakp \in \Ram_f(B)} \frac{\N(\frakp) - 1}{2} \right) \prod_{\frakp \in S}(\N(\frakp) + 1)}{2^\fraks (4 \pi^2)^{n} [K(B):k]},
\end{equation}
for some integer $\fraks$ with $0\le \fraks \le |S|$. All of these quantities are easily computed with \textsc{Magma}. We remark that because $K(B)$ is the maximal abelian extension of $k$ with $2$-elementary Galois group that is unramified outside of $\Ram_\infty(B)$ and in which all primes of $\Ram_f(B)$ split completely, we can use computational class field theory to compute $[K(B):k]$.

It is trivial to enumerate all real quadratic fields with root discriminant less than $118.436$, and in degrees $3,\dots,8$ all of the totally real fields satisfying the root discriminant bounds \eqref{rootdiscbounds} were enumerated by Voight \cite{voight-fields}. Fix one such field $k$. Using \eqref{equation:volumeformulaequalityagain} we compute all possible sets $\Ram(B)$ and $S$ for which the right hand side of \eqref{equation:volumeformulaequalityagain} is at most $32\pi^2$. Here $\Ram(B)$ is a set of even cardinality consisting of finite primes of $k$ and $n - 2$ real places of $k$, and $S$ is a set of finite primes of $k$ none of which lie in $\Ram(B)$.

If $\Gamma_{st}$ is the stable subgroup of the fundamental group $\Gamma$ of a fake quadric, then $\Gamma_{st}$ is contained in a maximal arithmetic group $\Gamma_{S,\mathcal O}$ (where $\mathcal O$ is a maximal order of the quaternion algebra $B$ over $k$ defined by the ramification set $\Ram(B)$). Then $\covol(\Gamma_{st}) = 32 \pi^2$ if $\Gamma_{st}$ is properly contained in $\Gamma$ and $\covol(\Gamma_{st}) = 16 \pi^2$ if $\Gamma=\Gamma_{st}$. In particular:
\begin{equation}
\covol(\Gamma_{S,\mathcal O}) = \begin{cases} \displaystyle \frac{32\pi^2}{[\Gamma_{S,\mathcal O}:\Gamma]} & \Gamma \neq \Gamma_{st} \\ & \\ \displaystyle \frac{16\pi^2}{[\Gamma_{S,\mathcal O}:\Gamma]} & \Gamma = \Gamma_{st} \end{cases}
\end{equation}
It follows that we need only consider those sets $\Ram(B)$ and $S$ for which the associated group $\Gamma_{S,\mathcal O}$ has covolume an integral submultiple of $32\pi^2$.

We note that in many cases, Theorem \ref{thm:galoisdescswap} implies that $\Gamma$ cannot properly contain $\Gamma_{st}$. For instance, this is necessarily the case when $\#\Aut(k / \mathbb Q) = 1$. In these cases it follows that $\Gamma = \Gamma_{st}$, $\covol(\Gamma_{st}) = 16 \pi^2$ and $\covol(\Gamma_{S,\mathcal O}) = 16 \pi^2 / [\Gamma_{S,\mathcal O}:\Gamma]$.

If $\calO$ and $\calO^\prime$ are maximal orders of $B$, then the groups $\Gamma_{S, \calO}$ and $\Gamma_{S, \calO^\prime}$ will not always be conjugate, but they have equal covolumes. If $\calO$ and $\calO^\prime$ are conjugate by an element of $B^\times$, then $\Gamma_{S, \calO}$ and $\Gamma_{S, \calO^\prime}$ will be conjugate \cite[Lemma 4.1]{Chinburg-Friedman-Selectivity}. Also, the number of conjugacy classes of maximal orders of $B$ is equal to $[K(B):k]$ \cite[pp. 37]{Chinburg-Friedman-Selectivity}, and this quantity is referred to as the \defi{type number} $t_B$ of $B$. Using the methods described above, we see that all but four of the quaternion algebras $B$ that we need to consider have type number equal to one. It follows in these cases that the conjugacy class of $\Gamma_{S, \calO}$ does not depend on the chosen maximal order $\calO$ of $B$. When $t_B>1$ there will be $t_B$ conjugacy classes of maximal arithmetic groups arising from $B$ that must be considered.

The process described above yields a large number of maximal arithmetic groups. While it is possible that each of these groups contains a subgroup of covolume $16\pi^2$ or $32\pi^2$, not all of these groups may contain \emph{torsion-free} subgroups of these covolumes. Suppose for instance that $\Gamma_{S, \calO}$ contains elements of finite orders $m_1, \dots, m_t$. (Note that, because only finitely many cyclotomic extensions embed into the quaternion algebra $B$, there are only finitely many possible orders of an element of $\Gamma_{S,\calO}$ of finite order.) If $\Gamma$ is a finite index subgroup of $\Gamma_{S,\mathcal O}$ that is torsion-free then we necessarily have that
\begin{equation}
\lcm(m_1, \dots, m_t)\ {\mid}\ [\Gamma_{S, \calO} : \Gamma].
\end{equation}
We then use Lemma \ref{lem:embedtors} to rule out many possible $B$, $\calO$, and $S$, where conditions 2.\ and 3.\ are easily verified using \textsc{Magma}. This allows us to eliminate additional maximal arithmetic subgroups, and, when combined with Theorem \ref{theorem:degreebounds}, proves the following.

\begin{theorem}
Let $\Gamma \backslash \calH$ be an irreducible fake quadric. Then the stable subgroup $\Gamma_{st}$ of $\Gamma$ is (up to conjugacy) contained in one of the maximal arithmetic subgroups of $\Isom^+(\calH)$ enumerated in the Appendix.
\end{theorem}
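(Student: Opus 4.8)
The plan is to assemble the pieces developed in Sections \ref{sec:background}--\ref{sec:enumeration} into a single finite enumeration. First, by Lemma \ref{lem:Numerical}, saying that $X = \Gamma\backslash\calH$ is an irreducible fake quadric is equivalent to saying that $\Gamma \leq \Isom^h(\calH)$ is a torsion-free lattice with $\covol(\Gamma) = 16\pi^2$. Its stable subgroup $\Gamma_{st} = \Gamma \cap G^*$ has index $1$ or $2$ in $\Gamma$ (with index $2$ exactly when $\Gamma \not\leq G^*$, a situation governed by Theorem \ref{thm:galoisdescswap}), so $\covol(\Gamma_{st}) \in \{16\pi^2, 32\pi^2\}$; in either case $\covol(\Gamma_{st})$ is an integral submultiple of $32\pi^2$. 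Since $\Gamma_{st}$ is an irreducible arithmetic lattice in $G^*$, it is contained in some maximal lattice $\Gamma_{S,\calO}$ attached to a totally real field $k$ and a quaternion algebra $B$ over $k$ split at exactly two real places, and $\covol(\Gamma_{S,\calO})$ divides $\covol(\Gamma_{st})$, hence is itself a submultiple of $32\pi^2$.

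Next I would invoke Theorem \ref{theorem:degreebounds} to cut the field $k$ down to a finite list: the degree satisfies $2 \leq n \leq 8$, and the root discriminant $\delta$ obeys the bounds tabulated in \eqref{rootdiscbounds}. All totally real fields meeting these constraints are known explicitly (the degree $2$ case being immediate, and degrees $3$ through $8$ being covered by the tables of Voight \cite{voight-fields}), yielding a finite set of candidate fields $k$. For each such $k$, I would sweep over all admissible ramification sets $\Ram(B)$ (of even cardinality, containing the $n-2$ relevant real places together with some finite primes) and all finite sets $S$ of primes disjoint from $\Ram(B)$, using the covolume formula \eqref{equation:volumeformulaequalityagain} to retain only those pairs for which $\covol(\Gamma_{S,\calO})$ is an integral submultiple of $32\pi^2$. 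Because each local factor $\N(\frakp)-1$ or $\N(\frakp)+1$ grows with the norm while the covolume is bounded, only finitely many $(\Ram(B), S)$ survive, and these can all be produced by the computational class field theory in \textsc{Magma}, including the evaluation of $[K(B):k]$ and of the type number $t_B$.

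The final step is to discard the candidate maximal lattices that cannot contain a torsion-free subgroup of the requisite covolume. If $\Gamma \leq \Gamma_{S,\calO}$ is torsion-free, then the least common multiple of the orders of the torsion elements of $\Gamma_{S,\calO}$ must divide the index $[\Gamma_{S,\calO} : \Gamma]$, which is in turn pinned down by the ratio of $\covol(\Gamma_{S,\calO})$ to $16\pi^2$ or $32\pi^2$. I would compute the torsion of each surviving $\Gamma_{S,\calO}$ using Theorem \ref{theorem:CFmaximaltorsion}, whose conditions (ii) and (iii) are directly verifiable in \textsc{Magma}, and eliminate any group whose torsion is incompatible with the allowed index. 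In the four commensurability classes with type number $t_B > 1$, each of the $t_B$ conjugacy classes of maximal orders must be handled separately, since $\Gamma_{S,\calO}$ and $\Gamma_{S,\calO'}$ need not be conjugate for nonconjugate $\calO,\calO'$. The groups that remain are exactly those recorded in the Appendix. The main obstacle is not any single deduction but the combinatorial scope of the enumeration: one must be certain that the field tables are complete up to the stated discriminant bounds, that the covolume sweep over $(\Ram(B), S)$ is exhaustive, and that the torsion test faithfully reflects the delicate conditions of Theorem \ref{theorem:CFmaximaltorsion} — in particular the selectivity-type subtleties and the $t_B > 1$ cases — so that no genuine fake quadric is inadvertently excluded.
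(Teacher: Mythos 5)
Your proposal is correct and follows essentially the same route as the paper: reduce to stable lattices of covolume $16\pi^2$ or $32\pi^2$ via Lemma \ref{lem:Numerical} and Theorem \ref{thm:galoisdescswap}, bound the field by Theorem \ref{theorem:degreebounds}, sweep over $(\Ram(B),S)$ with the covolume formula \eqref{equation:volumeformulaequalityagain} and the type number, and prune by the torsion/index divisibility test using Theorem \ref{theorem:CFmaximaltorsion}. The only cosmetic quibble is the phrase ``$\covol(\Gamma_{S,\calO})$ divides $\covol(\Gamma_{st})$,'' which should read that the ratio $\covol(\Gamma_{st})/\covol(\Gamma_{S,\calO}) = [\Gamma_{S,\calO}:\Gamma_{st}]$ is an integer; your conclusion that the maximal group's covolume is an integral submultiple of $32\pi^2$ is exactly right.
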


\bibliographystyle{plain}
\bibliography{FakeQuadricI}

\newpage

\section*{Appendix}\label{section:appendix}

In this appendix we list maximal arithmetic subgroups of $\PGL_2(\R) \times \PGL_2(\R)$ that may contain the stable subgroup of the fundamental group of an irreducible fake quadric. Since the number of these commensurability classes is large, we employ some shortcuts in how we record our results, which we now describe.

We group maximal stable arithmetic subgroups first by the data
\[ n, d, D, N \]
where $n=[k:\Q]$ is the degree of the underlying totally real field $k$ and $d$ is its discriminant (for the commensurability classes below, the discriminant uniquely determines the field); the associated quaternion algebra $B$ is specified by the absolute norm $D=\N(\frakD)$ of its discriminant $\frakD$, and the Eichler order $\calE \subset B$ is specified by the absolute norm $N=\N(\frakN)$ of its level $\frakN$. There are only finitely many possibilities for $k,B,\calO$ with this data, and they are explicitly given in the computer readable output available online \cite{ourdata}.  

In each line of the tables, we provide a bit more data about the groups (which in some cases depends on more than just the data above, so the data $n,d,D,N$ may be repeated). First, we compute the covolume of the maximal stable arithmetic group with the specified data. Second, we compute the index of the maximal \emph{holomorphic} stable group inside the maximal stable group---in nearly all cases, this index is $4$ (coming from elements acting by orientation-reversing isometry on each of the two factors of $\calH$). Third, we compute \emph{a divisor} of the least common multiple of the orders of the elements of finite order in $\Gamma_{S,\mathcal O}$. 
Fourth, we compute the number $0 \le \fraks \le |S|$ that appears in \eqref{equation:volumeformulaequalityagain} using class field theory \cite[p.\ 356]{Maclachlan--Reid}. Finally, the last column records $\star$ if it is guaranteed that $\Gamma = \Gamma_{st}$; otherwise, we leave this entry blank, indicating that it is possible that there are unstable fake quadric groups $\Gamma$ with this data. 

\begin{rmk}\label{rmk:dzambicerror}
We now make some remarks on the discrepancies between our work and \Dzambic's \cite{Dzambic} on fake quadrics defined over quadratic fields. Any lattices in our paper that are not stable will not appear in \cite{Dzambic}. Unfortunately, our tables also differ for stable lattices. We found that the entry $[\Q(\sqrt{5}), v_2 v_{31}, \emptyset, 2]$ in \cite[Thm.\ 3.15]{Dzambic}, and similarly for $v_{31}^\prime$, cannot produce fake quadrics, even in the generality discussed in this paper.

We quickly explain how our index calculations \emph{should} differ from those of \cite{Dzambic} when $\frakN = 1$, i.e., for $\Gamma_\calO$. It is easy to see, using Eichler's theorem on norms, that the index of $\Gamma_\calO^+$ in $\Gamma_\calO$ is equal to $2$ when the narrow class number $h^+$ equals the class number $h$, since we can find an element of $\calO^*$ with reduced norm to $k$ that is negative at each real place of $k$, but cannot find one that is negative at exactly one real place. Otherwise, $h^+ = 2 h$ and $\Gamma_\calO^+$ has index $4$ in $\Gamma_\calO$, since we can find elements of $\calO^*$ with reduced norm of chosen sign at each real embedding. When there is the possibility that the lattice has proper stable subgroup, we report the index in $\Gamma_\calO$ of a (potential) lattice of covolume $32 \pi^2$. In this case, our index should be $4$ times the index $I$ from \cite{Dzambic} when $h^+ = h$ and $8$ times \Dzambic's $I$ when $h^+ = 2 h$. When $\Gamma_\calO$ is stable, we report the index of a (potential) subgroup of covolume $16 \pi^2$, so our index should be $2 I$ when $h^+ = h$ and $4 I$ when $h^+ = 2 h$. This, plus the index of $\Gamma_\calO^+$ in $N\Gamma_\calO^+$ (in the notation from \cite{Dzambic}), should account for the difference between our index and Dzambic's.
\end{rmk}

\newpage

\begin{footnotesize}

\begin{multicols}{2}
\begin{tabular}{ccc|ccccc}
$d$ & $D$ & $N$ & vol & hol & lcm & $\nu$ & st\\
\hline
5 & 20 & 1 & $(1/5)\pi^2$ & 4 & 5 & 0 &  \\
 &  & 9 & $\pi^2$ & 4 & 1 & 1 &  \\
 &  & 19 & $2\pi^2$ & 4 & 1 & 1 & $\star$ \\
 &  & 31 & $(16/5)\pi^2$ & 4 & 5 & 1 & $\star$ \\
 &  & 79 & $8\pi^2$ & 4 & 1 & 1 & $\star$ \\
 &  & 279 & $16\pi^2$ & 4 & 1 & 2 & $\star$ \\
 & 36 & 1 & $(2/5)\pi^2$ & 4 & 5 & 0 &  \\
 &  & 19 & $4\pi^2$ & 4 & 1 & 1 & $\star$ \\
 &  & 79 & $16\pi^2$ & 4 & 1 & 1 & $\star$ \\
 & 44 & 1 & $(1/2)\pi^2$ & 4 & 2 & 0 & $\star$ \\
 &  & 31 & $8\pi^2$ & 4 & 1 & 1 & $\star$ \\
 & 45 & 1 & $(8/15)\pi^2$ & 4 & 15 & 0 &  \\
 &  & 4 & $(4/3)\pi^2$ & 4 & 3 & 1 &  \\
 &  & 11 & $(16/5)\pi^2$ & 4 & 5 & 1 & $\star$ \\
 &  & 19 & $(16/3)\pi^2$ & 4 & 3 & 1 & $\star$ \\
 &  & 29 & $8\pi^2$ & 4 & 1 & 1 & $\star$ \\
 &  & 44 & $8\pi^2$ & 4 & 1 & 2 & $\star$ \\
 &  & 59 & $16\pi^2$ & 4 & 1 & 1 & $\star$ \\
 & 55 & 1 & $(2/3)\pi^2$ & 4 & 3 & 0 & $\star$ \\
 &  & 11 & $4\pi^2$ & 4 & 1 & 1 & $\star$ \\
 & 99 & 1 & $(4/3)\pi^2$ & 4 & 3 & 0 & $\star$ \\
 &  & 5 & $4\pi^2$ & 4 & 1 & 1 & $\star$ \\
 &  & 11 & $8\pi^2$ & 4 & 1 & 1 & $\star$ \\
 & 155 & 1 & $2\pi^2$ & 4 & 1 & 0 & $\star$ \\
 & 164 & 1 & $2\pi^2$ & 4 & 1 & 0 & $\star$ \\
 & 205 & 1 & $(8/3)\pi^2$ & 4 & 3 & 0 & $\star$ \\
 &  & 11 & $16\pi^2$ & 4 & 1 & 1 & $\star$ \\
 & 245 & 1 & $(16/5)\pi^2$ & 4 & 5 & 0 &  \\
 &  & 4 & $8\pi^2$ & 4 & 1 & 1 &  \\
 &  & 9 & $16\pi^2$ & 4 & 1 & 1 &  \\
 & 279 & 1 & $4\pi^2$ & 4 & 1 & 0 & $\star$ \\
 & 305 & 1 & $4\pi^2$ & 4 & 1 & 0 & $\star$ \\
 & 369 & 1 & $(16/3)\pi^2$ & 4 & 3 & 0 & $\star$ \\
 &  & 5 & $16\pi^2$ & 4 & 1 & 1 & $\star$ \\
 & 441 & 1 & $(32/5)\pi^2$ & 4 & 5 & 0 &  \\
 &  & 4 & $16\pi^2$ & 4 & 1 & 1 &  \\
 & 539 & 1 & $8\pi^2$ & 4 & 1 & 0 & $\star$ \\
 & 549 & 1 & $8\pi^2$ & 4 & 1 & 0 & $\star$ \\
 & 1205 & 1 & $16\pi^2$ & 4 & 1 & 0 & $\star$ \\
 & 1980 & 1 & $4\pi^2$ & 4 & 1 & 0 & $\star$ \\
 & 7380 & 1 & $16\pi^2$ & 4 & 1 & 0 & $\star$ \\
\end{tabular}

\begin{tabular}{ccc|ccccc}
$d$ & $D$ & $N$ & vol & hol & lcm & $\nu$ & st\\
\hline
8 & 14 & 1 & $(1/4)\pi^2$ & 4 & 4 & 0 & $\star$ \\
 &  & 7 & $\pi^2$ & 4 & 1 & 1 & $\star$ \\
 &  & 31 & $4\pi^2$ & 4 & 1 & 1 & $\star$ \\
 &  & 127 & $16\pi^2$ & 4 & 1 & 1 & $\star$ \\
 &  & 217 & $16\pi^2$ & 4 & 1 & 2 & $\star$ \\
 & 18 & 1 & $(1/3)\pi^2$ & 4 & 3 & 0 &  \\
 &  & 7 & $(4/3)\pi^2$ & 4 & 3 & 1 & $\star$ \\
 &  & 23 & $4\pi^2$ & 4 & 1 & 1 & $\star$ \\
 &  & 31 & $(16/3)\pi^2$ & 4 & 3 & 1 & $\star$ \\
 &  & 47 & $8\pi^2$ & 4 & 1 & 1 & $\star$ \\
 &  & 49 & $(16/3)\pi^2$ & 4 & 3 & 2 &  \\
 &  & 161 & $16\pi^2$ & 4 & 1 & 2 & $\star$ \\
 & 34 & 1 & $(2/3)\pi^2$ & 4 & 3 & 0 & $\star$ \\
 &  & 7 & $(8/3)\pi^2$ & 4 & 3 & 1 & $\star$ \\
 &  & 23 & $8\pi^2$ & 4 & 1 & 1 & $\star$ \\
 &  & 47 & $16\pi^2$ & 4 & 1 & 1 & $\star$ \\
 & 50 & 1 & $\pi^2$ & 4 & 1 & 0 &  \\
 &  & 7 & $4\pi^2$ & 4 & 1 & 1 & $\star$ \\
 &  & 31 & $16\pi^2$ & 4 & 1 & 1 & $\star$ \\
 &  & 49 & $16\pi^2$ & 4 & 1 & 2 &  \\
 & 63 & 1 & $2\pi^2$ & 4 & 1 & 0 & $\star$ \\
 &  & 7 & $8\pi^2$ & 4 & 1 & 1 & $\star$ \\
 & 119 & 1 & $4\pi^2$ & 4 & 1 & 0 & $\star$ \\
 &  & 7 & $16\pi^2$ & 4 & 1 & 1 & $\star$ \\
 & 153 & 1 & $(16/3)\pi^2$ & 4 & 3 & 0 & $\star$ \\
 &  & 2 & $8\pi^2$ & 4 & 1 & 1 & $\star$ \\
 & 194 & 1 & $4\pi^2$ & 4 & 1 & 0 & $\star$ \\
 &  & 7 & $16\pi^2$ & 4 & 1 & 1 & $\star$ \\
 & 225 & 1 & $8\pi^2$ & 4 & 1 & 0 &  \\
 & 289 & 1 & $(32/3)\pi^2$ & 4 & 3 & 0 &  \\
 &  & 2 & $16\pi^2$ & 4 & 1 & 1 &  \\
 & 386 & 1 & $8\pi^2$ & 4 & 1 & 0 & $\star$ \\
 & 425 & 1 & $16\pi^2$ & 4 & 1 & 0 & $\star$ \\
 & 2142 & 1 & $8\pi^2$ & 4 & 1 & 0 & $\star$ \\
 & 4046 & 1 & $16\pi^2$ & 4 & 1 & 0 & $\star$ \\
\end{tabular}

\end{multicols}
\begin{center}
\textbf{Table 1a}: Commensurability classes for degree $n=2$ (1 of 3)
\end{center}

\newpage

\begin{multicols}{2}
\begin{tabular}{ccc|ccccc}
$d$ & $D$ & $N$ & vol & hol & lcm & $\nu$ & st\\
\hline
12 & 6 & 1 & $(1/6)\pi^2$ & 4 & 6 & 0 &  \\
 &  & 11 & $\pi^2$ & 4 & 1 & 1 & $\star$ \\
 &  & 23 & $2\pi^2$ & 4 & 1 & 1 & $\star$ \\
 &  & 47 & $4\pi^2$ & 4 & 1 & 1 & $\star$ \\
 &  & 191 & $16\pi^2$ & 4 & 1 & 1 & $\star$ \\
 & 26 & 1 & $\pi^2$ & 4 & 1 & 0 & $\star$ \\
 &  & 3 & $2\pi^2$ & 4 & 1 & 1 & $\star$ \\
 & 39 & 1 & $2\pi^2$ & 4 & 1 & 0 & $\star$ \\
 & 50 & 1 & $2\pi^2$ & 4 & 1 & 0 &  \\
 &  & 3 & $4\pi^2$ & 4 & 1 & 1 &  \\
 & 75 & 1 & $4\pi^2$ & 4 & 1 & 0 &  \\
 & 98 & 1 & $4\pi^2$ & 4 & 1 & 0 &  \\
 &  & 3 & $8\pi^2$ & 4 & 1 & 1 &  \\
 & 147 & 1 & $8\pi^2$ & 4 & 1 & 0 &  \\
 & 194 & 1 & $8\pi^2$ & 4 & 1 & 0 & $\star$ \\
 &  & 3 & $16\pi^2$ & 4 & 1 & 1 & $\star$ \\
 & 291 & 1 & $16\pi^2$ & 4 & 1 & 0 & $\star$ \\
 & 386 & 1 & $16\pi^2$ & 4 & 1 & 0 & $\star$ \\
13 & 9 & 1 & $(1/3)\pi^2$ & 4 & 6 & 0 &  \\
 &  & 23 & $4\pi^2$ & 4 & 1 & 1 & $\star$ \\
 & 12 & 1 & $(1/2)\pi^2$ & 4 & 2 & 0 & $\star$ \\
 &  & 3 & $\pi^2$ & 4 & 1 & 1 & $\star$ \\
 & 39 & 1 & $2\pi^2$ & 4 & 1 & 0 & $\star$ \\
 &  & 3 & $4\pi^2$ & 4 & 1 & 1 & $\star$ \\
 & 51 & 1 & $(8/3)\pi^2$ & 4 & 3 & 0 & $\star$ \\
 &  & 3 & $(16/3)\pi^2$ & 4 & 3 & 1 & $\star$ \\
 & 68 & 1 & $4\pi^2$ & 4 & 1 & 0 & $\star$ \\
 &  & 3 & $8\pi^2$ & 4 & 1 & 1 & $\star$ \\
 &  & 9 & $16\pi^2$ & 4 & 1 & 2 & $\star$ \\
 & 75 & 1 & $4\pi^2$ & 4 & 1 & 0 & $\star$ \\
 &  & 3 & $8\pi^2$ & 4 & 1 & 1 & $\star$ \\
 & 147 & 1 & $8\pi^2$ & 4 & 1 & 0 & $\star$ \\
 &  & 3 & $16\pi^2$ & 4 & 1 & 1 & $\star$ \\
 & 221 & 1 & $16\pi^2$ & 4 & 1 & 0 & $\star$ \\
 & 612 & 1 & $4\pi^2$ & 4 & 1 & 0 & $\star$ \\
17 & 4 & 1 & $(1/6)\pi^2$ & 4 & 6 & 0 &  \\
 &  & 47 & $4\pi^2$ & 4 & 1 & 1 & $\star$ \\
 &  & 191 & $16\pi^2$ & 4 & 1 & 1 & $\star$ \\
 & 18 & 1 & $(4/3)\pi^2$ & 4 & 3 & 0 & $\star$ \\
 &  & 2 & $2\pi^2$ & 4 & 1 & 1 & $\star$ \\
 & 26 & 1 & $2\pi^2$ & 4 & 1 & 0 & $\star$ \\
\end{tabular}

\begin{tabular}{ccc|ccccc}
$d$ & $D$ & $N$ & vol & hol & lcm & $\nu$ & st\\
\hline
17 & 34 & 1 & $(8/3)\pi^2$ & 4 & 3 & 0 & $\star$ \\
 &  & 2 & $4\pi^2$ & 4 & 1 & 1 & $\star$ \\
 & 50 & 1 & $4\pi^2$ & 4 & 1 & 0 & $\star$ \\
 & 98 & 1 & $8\pi^2$ & 4 & 1 & 0 & $\star$ \\
 & 117 & 1 & $16\pi^2$ & 4 & 1 & 0 & $\star$ \\
 & 225 & 1 & $32\pi^2$ & 4 & 1 & 0 &  \\
 & 468 & 1 & $4\pi^2$ & 4 & 1 & 0 & $\star$ \\
 & 612 & 1 & $(16/3)\pi^2$ & 4 & 3 & 0 &  \\
 & 884 & 1 & $8\pi^2$ & 4 & 1 & 0 & $\star$ \\
 & 900 & 1 & $8\pi^2$ & 4 & 1 & 0 &  \\
 & 1700 & 1 & $16\pi^2$ & 4 & 1 & 0 &  \\
 & 1764 & 1 & $16\pi^2$ & 4 & 1 & 0 &  \\
 & 3332 & 1 & $32\pi^2$ & 4 & 1 & 0 &  \\
21 & 12 & 1 & $\pi^2$ & 4 & 2 & 0 &  \\
 &  & 7 & $4\pi^2$ & 4 & 1 & 1 &  \\
 & 15 & 1 & $(4/3)\pi^2$ & 4 & 3 & 0 & $\star$ \\
 &  & 5 & $4\pi^2$ & 4 & 1 & 1 & $\star$ \\
 &  & 7 & $(16/3)\pi^2$ & 4 & 3 & 1 & $\star$ \\
 &  & 35 & $16\pi^2$ & 4 & 1 & 2 & $\star$ \\
 & 20 & 1 & $2\pi^2$ & 4 & 1 & 0 & $\star$ \\
 &  & 3 & $4\pi^2$ & 4 & 1 & 1 & $\star$ \\
 &  & 7 & $8\pi^2$ & 4 & 1 & 1 & $\star$ \\
 &  & 21 & $16\pi^2$ & 4 & 1 & 2 & $\star$ \\
 & 21 & 1 & $2\pi^2$ & 4 & 2 & 0 &  \\
 & 25 & 1 & $(8/3)\pi^2$ & 4 & 3 & 0 &  \\
 &  & 3 & $(16/3)\pi^2$ & 4 & 3 & 1 &  \\
 &  & 7 & $(32/3)\pi^2$ & 4 & 3 & 1 &  \\
 & 35 & 1 & $4\pi^2$ & 4 & 1 & 0 & $\star$ \\
 &  & 3 & $8\pi^2$ & 4 & 1 & 1 & $\star$ \\
 & 51 & 1 & $(16/3)\pi^2$ & 4 & 3 & 0 & $\star$ \\
 &  & 5 & $16\pi^2$ & 4 & 1 & 1 & $\star$ \\
 & 68 & 1 & $8\pi^2$ & 4 & 1 & 0 & $\star$ \\
 &  & 3 & $16\pi^2$ & 4 & 1 & 1 & $\star$ \\
 & 119 & 1 & $16\pi^2$ & 4 & 1 & 0 & $\star$ \\
 & 300 & 1 & $4\pi^2$ & 4 & 1 & 0 &  \\
 &  & 7 & $16\pi^2$ & 4 & 1 & 1 &  \\
 & 525 & 1 & $8\pi^2$ & 4 & 1 & 0 &  \\
 & 1020 & 1 & $16\pi^2$ & 4 & 1 & 0 & $\star$ \\
\end{tabular}
\end{multicols}

\begin{center}
\textbf{Table 1b}: Commensurability classes for degree $n=2$ (2 of 3)
\end{center}
\newpage

\begin{multicols}{2}
\begin{tabular}{ccc|ccccc}
$d$ & $D$ & $N$ & vol & hol & lcm & $\nu$ & st\\
\hline
24 & 6 & 1 & $(1/2)\pi^2$ & 4 & 2 & 0 &  \\
 & 15 & 1 & $2\pi^2$ & 4 & 1 & 0 & $\star$ \\
 & 150 & 1 & $2\pi^2$ & 4 & 1 & 0 &  \\
28 & 6 & 1 & $(2/3)\pi^2$ & 4 & 3 & 0 & $\star$ \\
 &  & 3 & $(4/3)\pi^2$ & 4 & 3 & 1 & $\star$ \\
 &  & 7 & $(8/3)\pi^2$ & 4 & 3 & 1 & $\star$ \\
 &  & 21 & $(16/3)\pi^2$ & 4 & 3 & 2 & $\star$ \\
 &  & 47 & $16\pi^2$ & 4 & 1 & 1 & $\star$ \\
 & 9 & 1 & $(4/3)\pi^2$ & 4 & 6 & 0 &  \\
 &  & 2 & $2\pi^2$ & 4 & 2 & 1 &  \\
 &  & 7 & $(16/3)\pi^2$ & 4 & 3 & 1 &  \\
 &  & 14 & $8\pi^2$ & 4 & 1 & 2 &  \\
 & 14 & 1 & $2\pi^2$ & 4 & 1 & 0 &  \\
 &  & 3 & $4\pi^2$ & 4 & 1 & 1 & $\star$ \\
 &  & 9 & $8\pi^2$ & 4 & 1 & 2 &  \\
 & 21 & 1 & $4\pi^2$ & 4 & 2 & 0 & $\star$ \\
 &  & 3 & $8\pi^2$ & 4 & 1 & 1 & $\star$ \\
 & 50 & 1 & $8\pi^2$ & 2 & 1 & 0 &  \\
 &  & 3 & $16\pi^2$ & 4 & 1 & 1 & $\star$ \\
 &  & 7 & $32\pi^2$ & 4 & 1 & 1 &  \\
 &  & 9 & $32\pi^2$ & 4 & 1 & 2 &  \\
 & 75 & 1 & $16\pi^2$ & 4 & 1 & 0 & $\star$ \\
 & 126 & 1 & $2\pi^2$ & 4 & 1 & 0 &  \\
 & 450 & 1 & $8\pi^2$ & 4 & 1 & 0 &  \\
 &  & 7 & $32\pi^2$ & 4 & 1 & 1 &  \\
33 & 6 & 1 & $\pi^2$ & 4 & 2 & 0 & $\star$ \\
 &  & 31 & $16\pi^2$ & 4 & 1 & 1 & $\star$ \\
 & 51 & 1 & $16\pi^2$ & 4 & 1 & 0 & $\star$ \\
 & 204 & 1 & $4\pi^2$ & 4 & 1 & 0 & $\star$ \\
41 & 4 & 1 & $(2/3)\pi^2$ & 4 & 6 & 0 &  \\
 &  & 5 & $2\pi^2$ & 4 & 2 & 1 & $\star$ \\
 &  & 23 & $8\pi^2$ & 4 & 1 & 1 & $\star$ \\
 & 10 & 1 & $(8/3)\pi^2$ & 4 & 3 & 0 & $\star$ \\
 &  & 2 & $4\pi^2$ & 4 & 1 & 1 & $\star$ \\
 &  & 5 & $8\pi^2$ & 4 & 1 & 1 & $\star$ \\
 & 18 & 1 & $(16/3)\pi^2$ & 4 & 3 & 0 & $\star$ \\
 &  & 2 & $8\pi^2$ & 4 & 1 & 1 & $\star$ \\
 &  & 5 & $16\pi^2$ & 4 & 1 & 1 & $\star$ \\
 & 25 & 1 & $(32/3)\pi^2$ & 4 & 3 & 0 &  \\
 &  & 2 & $16\pi^2$ & 4 & 1 & 1 & $\star$ \\
 & 100 & 1 & $(8/3)\pi^2$ & 4 & 3 & 0 &  \\
 & 180 & 1 & $(16/3)\pi^2$ & 4 & 3 & 0 & $\star$ \\
 &  & 5 & $16\pi^2$ & 4 & 1 & 1 & $\star$ \\
\end{tabular}

\begin{tabular}{ccc|ccccc}
$d$ & $D$ & $N$ & vol & hol & lcm & $\nu$ & st\\
\hline
60 & 6 & 1 & $2\pi^2$ & 4 & 1 & 0 &  \\
 &  & 7 & $8\pi^2$ & 4 & 1 & 1 & $\star$ \\
 &  & 49 & $32\pi^2$ & 4 & 1 & 2 &  \\
 & 15 & 1 & $8\pi^2$ & 4 & 1 & 0 &  \\
65 & 4 & 1 & $(4/3)\pi^2$ & 4 & 6 & 0 &  \\
 &  & 5 & $4\pi^2$ & 4 & 2 & 1 &  \\
 &  & 7 & $(16/3)\pi^2$ & 4 & 3 & 1 & $\star$ \\
 &  & 35 & $16\pi^2$ & 4 & 1 & 2 & $\star$ \\
 & 10 & 1 & $(16/3)\pi^2$ & 4 & 3 & 0 & $\star$ \\
 &  & 2 & $8\pi^2$ & 4 & 1 & 1 & $\star$ \\
 & 14 & 1 & $8\pi^2$ & 4 & 2 & 0 & $\star$ \\
 & 18 & 2 & $16\pi^2$ & 4 & 1 & 1 & $\star$ \\
 & 26 & 1 & $16\pi^2$ & 4 & 1 & 0 & $\star$ \\
 & 140 & 1 & $8\pi^2$ & 4 & 1 & 0 & $\star$ \\
 & 180 & 1 & $(32/3)\pi^2$ & 4 & 3 & 0 &  \\
 & 252 & 1 & $16\pi^2$ & 4 & 1 & 0 & $\star$ \\
 & 260 & 1 & $16\pi^2$ & 4 & 1 & 0 &  \\
 & 468 & 1 & $32\pi^2$ & 4 & 1 & 0 &  \\
69 & 15 & 1 & $8\pi^2$ & 4 & 1 & 0 & $\star$ \\
145 & 4 & 1 & $(16/3)\pi^2$ & 4 & 6 & 0 &  \\
 &  & 5 & $16\pi^2$ & 4 & 2 & 1 &  \\
 & 36 & 1 & $(16/3)\pi^2$ & 4 & 6 & 0 &  \\
 &  & 5 & $16\pi^2$ & 4 & 2 & 1 &  \\
161 & 4 & 1 & $(16/3)\pi^2$ & 2 & 6 & 0 &  \\
\end{tabular}
\end{multicols}

\begin{center}
\textbf{Table 1c}: Commensurability classes for degree $n=2$ (3 of 3)
\end{center}
\newpage

\begin{multicols}{2}
\begin{tabular}{ccc|ccccc}
$d$ & $D$ & $N$ & vol & hol & lcm & $\nu$ & st\\
\hline
49 & 7 & 1 & $(1/7)\pi^2$ & 4 & 14 & 0 & $\star$ \\
 &  & 13 & $\pi^2$ & 4 & 2 & 1 & $\star$ \\
 &  & 27 & $2\pi^2$ & 4 & 1 & 1 & $\star$ \\
 &  & 223 & $16\pi^2$ & 4 & 1 & 1 & $\star$ \\
 & 8 & 1 & $(1/6)\pi^2$ & 4 & 6 & 0 & $\star$ \\
 &  & 7 & $(2/3)\pi^2$ & 4 & 3 & 1 & $\star$ \\
 & 13 & 1 & $(2/7)\pi^2$ & 4 & 7 & 0 & $\star$ \\
 &  & 7 & $(8/7)\pi^2$ & 4 & 7 & 1 & $\star$ \\
 &  & 13 & $2\pi^2$ & 4 & 1 & 1 & $\star$ \\
 &  & 27 & $4\pi^2$ & 4 & 1 & 1 & $\star$ \\
 &  & 91 & $8\pi^2$ & 4 & 1 & 2 & $\star$ \\
 &  & 189 & $16\pi^2$ & 4 & 1 & 2 & $\star$ \\
 & 29 & 1 & $(2/3)\pi^2$ & 4 & 3 & 0 & $\star$ \\
 &  & 7 & $(8/3)\pi^2$ & 4 & 3 & 1 & $\star$ \\
 & 43 & 1 & $\pi^2$ & 4 & 2 & 0 & $\star$ \\
 &  & 7 & $4\pi^2$ & 4 & 1 & 1 & $\star$ \\
 & 97 & 1 & $(16/7)\pi^2$ & 4 & 7 & 0 & $\star$ \\
 &  & 13 & $16\pi^2$ & 4 & 1 & 1 & $\star$ \\
 & 113 & 1 & $(8/3)\pi^2$ & 4 & 3 & 0 & $\star$ \\
 & 337 & 1 & $8\pi^2$ & 4 & 1 & 0 & $\star$ \\
 & 673 & 1 & $16\pi^2$ & 4 & 1 & 0 & $\star$ \\
81 & 3 & 1 & $(1/9)\pi^2$ & 4 & 18 & 0 & $\star$ \\
 &  & 8 & $(1/2)\pi^2$ & 4 & 2 & 1 & $\star$ \\
 &  & 17 & $\pi^2$ & 4 & 2 & 1 & $\star$ \\
 &  & 71 & $4\pi^2$ & 4 & 1 & 1 & $\star$ \\
 & 17 & 1 & $(8/9)\pi^2$ & 4 & 9 & 0 & $\star$ \\
 &  & 3 & $(16/9)\pi^2$ & 4 & 9 & 1 & $\star$ \\
 &  & 8 & $4\pi^2$ & 4 & 1 & 1 & $\star$ \\
 &  & 17 & $8\pi^2$ & 4 & 1 & 1 & $\star$ \\
 &  & 24 & $8\pi^2$ & 4 & 1 & 2 & $\star$ \\
 &  & 51 & $16\pi^2$ & 4 & 1 & 2 & $\star$ \\
 & 19 & 1 & $\pi^2$ & 4 & 2 & 0 & $\star$ \\
 &  & 3 & $2\pi^2$ & 4 & 1 & 1 & $\star$ \\
 & 37 & 1 & $2\pi^2$ & 4 & 1 & 0 & $\star$ \\
 &  & 3 & $4\pi^2$ & 4 & 1 & 1 & $\star$ \\
 & 73 & 1 & $4\pi^2$ & 4 & 1 & 0 & $\star$ \\
 &  & 3 & $8\pi^2$ & 4 & 1 & 1 & $\star$ \\
 & 969 & 1 & $8\pi^2$ & 4 & 1 & 0 & $\star$ \\
 & 1887 & 1 & $16\pi^2$ & 4 & 1 & 0 & $\star$ \\
\end{tabular}

\begin{tabular}{ccc|ccccc}
$d$ & $D$ & $N$ & vol & hol & lcm & $\nu$ & st\\
\hline
148 & 2 & 1 & $(1/6)\pi^2$ & 4 & 6 & 0 & $\star$ \\
 &  & 5 & $(1/2)\pi^2$ & 4 & 2 & 1 & $\star$ \\
 &  & 23 & $2\pi^2$ & 4 & 1 & 1 & $\star$ \\
 &  & 31 & $(8/3)\pi^2$ & 4 & 3 & 1 & $\star$ \\
 &  & 155 & $8\pi^2$ & 4 & 1 & 2 & $\star$ \\
 &  & 191 & $16\pi^2$ & 4 & 1 & 1 & $\star$ \\
 & 5 & 1 & $(2/3)\pi^2$ & 4 & 3 & 0 & $\star$ \\
 &  & 2 & $\pi^2$ & 4 & 1 & 1 & $\star$ \\
 &  & 23 & $8\pi^2$ & 4 & 1 & 1 & $\star$ \\
 &  & 62 & $16\pi^2$ & 4 & 1 & 2 & $\star$ \\
 & 13 & 1 & $2\pi^2$ & 4 & 1 & 0 & $\star$ \\
 & 17 & 1 & $(8/3)\pi^2$ & 4 & 3 & 0 & $\star$ \\
 &  & 2 & $4\pi^2$ & 4 & 1 & 1 & $\star$ \\
 &  & 5 & $8\pi^2$ & 4 & 1 & 1 & $\star$ \\
 & 25 & 1 & $4\pi^2$ & 4 & 1 & 0 & $\star$ \\
 & 97 & 1 & $16\pi^2$ & 4 & 1 & 0 & $\star$ \\
 & 130 & 1 & $2\pi^2$ & 4 & 1 & 0 & $\star$ \\
 & 170 & 1 & $(8/3)\pi^2$ & 4 & 3 & 0 & $\star$ \\
 & 250 & 1 & $4\pi^2$ & 4 & 1 & 0 & $\star$ \\
 & 442 & 1 & $8\pi^2$ & 4 & 1 & 0 & $\star$ \\
 & 850 & 1 & $16\pi^2$ & 4 & 1 & 0 & $\star$ \\
 & 970 & 1 & $16\pi^2$ & 4 & 1 & 0 & $\star$ \\
169 & 5 & 1 & $(2/3)\pi^2$ & 4 & 3 & 0 & $\star$ \\
 &  & 5 & $2\pi^2$ & 4 & 1 & 1 & $\star$ \\
 &  & 47 & $16\pi^2$ & 4 & 1 & 1 & $\star$ \\
 & 13 & 1 & $2\pi^2$ & 4 & 1 & 0 & $\star$ \\
 & 125 & 1 & $(8/3)\pi^2$ & 4 & 3 & 0 & $\star$ \\
 & 325 & 1 & $8\pi^2$ & 4 & 1 & 0 & $\star$ \\
229 & 2 & 1 & $(1/3)\pi^2$ & 4 & 6 & 0 & $\star$ \\
 &  & 7 & $(4/3)\pi^2$ & 4 & 3 & 1 & $\star$ \\
 &  & 23 & $4\pi^2$ & 4 & 1 & 1 & $\star$ \\
 &  & 31 & $(16/3)\pi^2$ & 4 & 3 & 1 & $\star$ \\
 &  & 47 & $8\pi^2$ & 4 & 1 & 1 & $\star$ \\
 &  & 161 & $16\pi^2$ & 4 & 1 & 2 & $\star$ \\
 & 4 & 1 & $\pi^2$ & 4 & 2 & 0 & $\star$ \\
 &  & 7 & $4\pi^2$ & 4 & 1 & 1 & $\star$ \\
 &  & 31 & $16\pi^2$ & 4 & 1 & 1 & $\star$ \\
 & 7 & 1 & $2\pi^2$ & 4 & 2 & 0 & $\star$ \\
 & 13 & 1 & $4\pi^2$ & 4 & 1 & 0 & $\star$ \\
 &  & 7 & $16\pi^2$ & 4 & 1 & 1 & $\star$ \\
 & 49 & 1 & $16\pi^2$ & 4 & 1 & 0 & $\star$ \\
\end{tabular}
\end{multicols}
\begin{center}
\textbf{Table 2a}: Commensurability classes for degree $n=3$ (1 of 2)
\end{center}
\newpage

\begin{multicols}{2}
\begin{tabular}{ccc|ccccc}
$d$ & $D$ & $N$ & vol & hol & lcm & $\nu$ & st\\
\hline
257 & 3 & 1 & $(2/3)\pi^2$ & 4 & 6 & 0 & $\star$ \\
 &  & 5 & $2\pi^2$ & 4 & 2 & 1 & $\star$ \\
 &  & 7 & $(8/3)\pi^2$ & 4 & 3 & 1 & $\star$ \\
 &  & 35 & $8\pi^2$ & 4 & 1 & 2 & $\star$ \\
 &  & 47 & $16\pi^2$ & 4 & 1 & 1 & $\star$ \\
 & 5 & 1 & $(4/3)\pi^2$ & 4 & 3 & 0 & $\star$ \\
 &  & 3 & $(8/3)\pi^2$ & 4 & 3 & 1 & $\star$ \\
 &  & 7 & $(16/3)\pi^2$ & 4 & 3 & 1 & $\star$ \\
 & 7 & 1 & $2\pi^2$ & 4 & 2 & 0 & $\star$ \\
 &  & 3 & $4\pi^2$ & 4 & 1 & 1 & $\star$ \\
 & 9 & 1 & $(8/3)\pi^2$ & 4 & 3 & 0 & $\star$ \\
 &  & 3 & $(16/3)\pi^2$ & 4 & 3 & 1 & $\star$ \\
 &  & 5 & $8\pi^2$ & 4 & 1 & 1 & $\star$ \\
 &  & 15 & $16\pi^2$ & 4 & 1 & 2 & $\star$ \\
 & 25 & 1 & $8\pi^2$ & 4 & 1 & 0 & $\star$ \\
 &  & 3 & $16\pi^2$ & 4 & 1 & 1 & $\star$ \\
 & 49 & 1 & $16\pi^2$ & 4 & 1 & 0 & $\star$ \\
 & 105 & 1 & $4\pi^2$ & 4 & 1 & 0 & $\star$ \\
 & 135 & 1 & $(16/3)\pi^2$ & 4 & 3 & 0 & $\star$ \\
 & 189 & 1 & $8\pi^2$ & 4 & 1 & 0 & $\star$ \\
 & 315 & 1 & $16\pi^2$ & 4 & 1 & 0 & $\star$ \\
 & 375 & 1 & $16\pi^2$ & 4 & 1 & 0 & $\star$ \\
316 & 2 & 1 & $(2/3)\pi^2$ & 4 & 3 & 0 & $\star$ \\
 &  & 1 & $(2/3)\pi^2$ & 4 & 6 & 0 & $\star$ \\
 &  & 2 & $\pi^2$ & 4 & 1 & 1 & $\star$ \\
 &  & 2 & $\pi^2$ & 4 & 2 & 1 & $\star$ \\
 &  & 11 & $4\pi^2$ & 4 & 1 & 1 & $\star$ \\
 &  & 23 & $8\pi^2$ & 4 & 1 & 1 & $\star$ \\
 &  & 62 & $16\pi^2$ & 4 & 1 & 2 & $\star$ \\
 & 17 & 2 & $16\pi^2$ & 4 & 1 & 1 & $\star$ \\
 & 68 & 1 & $(8/3)\pi^2$ & 4 & 3 & 0 & $\star$ \\
 &  & 11 & $16\pi^2$ & 4 & 1 & 1 & $\star$ \\
\end{tabular}

\begin{tabular}{ccc|ccccc}
$d$ & $D$ & $N$ & vol & hol & lcm & $\nu$ & st \\
\hline
321 & 3 & 1 & $\pi^2$ & 4 & 2 & 0 & $\star$ \\
 &  & 3 & $2\pi^2$ & 4 & 1 & 1 & $\star$ \\
 &  & 7 & $4\pi^2$ & 4 & 1 & 1 & $\star$ \\
 &  & 21 & $8\pi^2$ & 4 & 1 & 2 & $\star$ \\
 &  & 31 & $16\pi^2$ & 4 & 1 & 1 & $\star$ \\
697 & 5 & 1 & $(16/3)\pi^2$ & 4 & 3 & 0 & $\star$ \\
 & 13 & 1 & $16\pi^2$ & 4 & 1 & 0 & $\star$ \\
837 & 2 & 1 & $(8/3)\pi^2$ & 4 & 6 & 0 & $\star$ \\
 &  & 3 & $(16/3)\pi^2$ & 4 & 3 & 1 & $\star$ \\
 &  & 5 & $8\pi^2$ & 4 & 2 & 1 & $\star$ \\
 &  & 15 & $16\pi^2$ & 4 & 1 & 2 & $\star$ \\
 & 3 & 2 & $8\pi^2$ & 4 & 2 & 1 & $\star$ \\
 & 4 & 1 & $8\pi^2$ & 4 & 2 & 0 & $\star$ \\
 &  & 3 & $16\pi^2$ & 4 & 1 & 1 & $\star$ \\
 & 5 & 2 & $16\pi^2$ & 4 & 1 & 1 & $\star$ \\
 & 24 & 1 & $4\pi^2$ & 4 & 2 & 0 & $\star$ \\
 & 30 & 1 & $(16/3)\pi^2$ & 4 & 3 & 0 & $\star$ \\
 & 40 & 1 & $8\pi^2$ & 4 & 1 & 0 & $\star$ \\
 &  & 3 & $16\pi^2$ & 4 & 1 & 1 & $\star$ \\
 & 60 & 1 & $16\pi^2$ & 4 & 1 & 0 & $\star$ \\
 & 78 & 1 & $16\pi^2$ & 4 & 1 & 0 & $\star$ \\
1257 & 3 & 1 & $8\pi^2$ & 4 & 2 & 0 & $\star$ \\
 &  & 3 & $16\pi^2$ & 4 & 1 & 1 & $\star$ \\
\end{tabular}
\end{multicols}
\begin{center}
\textbf{Table 2b}: Commensurability classes for degree $n=3$ (2 of 2)
\end{center}
\newpage

\begin{multicols}{2}
\begin{tabular}{ccc|ccccc}
$d$ & $D$ & $N$ & vol & hol & lcm & $\nu$ & st\\
\hline
725 & 1 & 1 & $(1/15)\pi^2$ & 4 & 30 & 0 & $\star$ \\
 &  & 1 & $(1/15)\pi^2$ & 4 & 30 & 0 &  \\
 &  & 11 & $(2/5)\pi^2$ & 4 & 5 & 1 & $\star$ \\
 &  & 19 & $(2/3)\pi^2$ & 4 & 3 & 1 & $\star$ \\
 &  & 29 & $\pi^2$ & 4 & 2 & 1 & $\star$ \\
 &  & 29 & $\pi^2$ & 4 & 2 & 1 &  \\
 &  & 31 & $(16/15)\pi^2$ & 4 & 15 & 1 & $\star$ \\
 &  & 79 & $(8/3)\pi^2$ & 4 & 3 & 1 & $\star$ \\
 &  & 209 & $4\pi^2$ & 4 & 1 & 2 & $\star$ \\
 &  & 479 & $16\pi^2$ & 4 & 1 & 1 & $\star$ \\
 &  & 869 & $16\pi^2$ & 4 & 1 & 2 & $\star$ \\
 &  & 899 & $16\pi^2$ & 4 & 1 & 2 & $\star$ \\
 & 275 & 1 & $4\pi^2$ & 4 & 1 & 0 & $\star$ \\
 & 539 & 1 & $8\pi^2$ & 4 & 1 & 0 & $\star$ \\
 & 1025 & 1 & $16\pi^2$ & 4 & 1 & 0 & $\star$ \\
 & 2025 & 1 & $32\pi^2$ & 4 & 1 & 0 &  \\
1125 & 1 & 1 & $(2/15)\pi^2$ & 2 & 30 & 0 & $\star$ \\
 &  & 1 & $(2/15)\pi^2$ & 2 & 30 & 0 &  \\
 &  & 5 & $(2/5)\pi^2$ & 4 & 10 & 1 & $\star$ \\
 &  & 5 & $(2/5)\pi^2$ & 4 & 10 & 1 &  \\
 &  & 9 & $(2/3)\pi^2$ & 4 & 6 & 1 & $\star$ \\
 &  & 9 & $(2/3)\pi^2$ & 4 & 6 & 1 &  \\
 &  & 29 & $2\pi^2$ & 4 & 2 & 1 & $\star$ \\
 &  & 45 & $2\pi^2$ & 4 & 2 & 2 & $\star$ \\
 &  & 45 & $2\pi^2$ & 4 & 2 & 2 &  \\
 &  & 59 & $4\pi^2$ & 4 & 1 & 1 & $\star$ \\
 &  & 239 & $16\pi^2$ & 4 & 1 & 1 & $\star$ \\
 & 45 & 1 & $(16/15)\pi^2$ & 4 & 15 & 0 & $\star$ \\
 &  & 1 & $(16/15)\pi^2$ & 4 & 15 & 0 &  \\
 &  & 29 & $16\pi^2$ & 4 & 1 & 1 & $\star$ \\
 & 80 & 1 & $2\pi^2$ & 4 & 1 & 0 & $\star$ \\
 &  & 1 & $2\pi^2$ & 4 & 1 & 0 &  \\
 & 144 & 1 & $4\pi^2$ & 4 & 1 & 0 & $\star$ \\
 &  & 1 & $4\pi^2$ & 4 & 1 & 0 &  \\
 & 155 & 1 & $4\pi^2$ & 4 & 1 & 0 & $\star$ \\
 & 279 & 1 & $8\pi^2$ & 4 & 1 & 0 & $\star$ \\
 & 305 & 1 & $8\pi^2$ & 4 & 1 & 0 & $\star$ \\
 & 549 & 1 & $16\pi^2$ & 4 & 1 & 0 & $\star$ \\
 & 605 & 1 & $16\pi^2$ & 4 & 1 & 0 & $\star$ \\
 &  & 1 & $16\pi^2$ & 4 & 1 & 0 &  \\
 & 1089 & 1 & $32\pi^2$ & 4 & 1 & 0 &  \\
\end{tabular}

\begin{tabular}{ccc|ccccc}
$d$ & $D$ & $N$ & vol & hol & lcm & $\nu$ & st \\
\hline
1957 & 1 & 1 & $(1/3)\pi^2$ & 4 & 6 & 0 & $\star$ \\
 &  & 3 & $(2/3)\pi^2$ & 4 & 3 & 1 & $\star$ \\
 &  & 7 & $(4/3)\pi^2$ & 4 & 3 & 1 & $\star$ \\
 &  & 21 & $(8/3)\pi^2$ & 4 & 3 & 2 & $\star$ \\
 &  & 23 & $4\pi^2$ & 4 & 1 & 1 & $\star$ \\
 &  & 31 & $(16/3)\pi^2$ & 4 & 3 & 1 & $\star$ \\
 &  & 47 & $8\pi^2$ & 4 & 1 & 1 & $\star$ \\
 &  & 69 & $8\pi^2$ & 4 & 1 & 2 & $\star$ \\
 &  & 141 & $16\pi^2$ & 4 & 1 & 2 & $\star$ \\
 &  & 161 & $16\pi^2$ & 4 & 1 & 2 & $\star$ \\
 & 21 & 1 & $\pi^2$ & 4 & 2 & 0 & $\star$ \\
 &  & 31 & $16\pi^2$ & 4 & 1 & 1 & $\star$ \\
 & 291 & 1 & $16\pi^2$ & 4 & 1 & 0 & $\star$ \\
2000 & 20 & 1 & $\pi^2$ & 4 & 1 & 0 & $\star$ \\
 &  & 1 & $\pi^2$ & 4 & 1 & 0 &  \\
2304 & 18 & 1 & $\pi^2$ & 4 & 1 & 0 &  \\
2777 & 1 & 1 & $(2/3)\pi^2$ & 4 & 6 & 0 & $\star$ \\
 &  & 2 & $\pi^2$ & 4 & 2 & 1 & $\star$ \\
 &  & 11 & $4\pi^2$ & 4 & 1 & 1 & $\star$ \\
 &  & 23 & $8\pi^2$ & 4 & 1 & 1 & $\star$ \\
 &  & 47 & $16\pi^2$ & 4 & 1 & 1 & $\star$ \\
 &  & 62 & $16\pi^2$ & 4 & 1 & 2 & $\star$ \\
 & 194 & 1 & $16\pi^2$ & 4 & 1 & 0 & $\star$ \\
3600 & 99 & 1 & $16\pi^2$ & 4 & 1 & 0 & $\star$ \\
3981 & 15 & 1 & $2\pi^2$ & 4 & 1 & 0 & $\star$ \\
 & 27 & 1 & $4\pi^2$ & 4 & 1 & 0 & $\star$ \\
4225 & 1 & 1 & $(16/15)\pi^2$ & 4 & 30 & 0 &  \\
 &  & 4 & $(8/3)\pi^2$ & 4 & 6 & 1 & $\star$ \\
 &  & 4 & $(8/3)\pi^2$ & 4 & 6 & 1 &  \\
 &  & 9 & $(16/3)\pi^2$ & 4 & 6 & 1 &  \\
 & 36 & 4 & $16\pi^2$ & 4 & 1 & 1 & $\star$ \\
4352 & 1 & 1 & $(4/3)\pi^2$ & 2 & 12 & 0 & $\star$ \\
 &  & 1 & $(4/3)\pi^2$ & 2 & 12 & 0 &  \\
 &  & 2 & $2\pi^2$ & 2 & 4 & 1 & $\star$ \\
 &  & 2 & $2\pi^2$ & 2 & 4 & 1 &  \\
 &  & 7 & $(16/3)\pi^2$ & 4 & 3 & 1 & $\star$ \\
 &  & 14 & $8\pi^2$ & 4 & 1 & 2 & $\star$ \\
 &  & 23 & $16\pi^2$ & 4 & 1 & 1 & $\star$ \\
 &  & 98 & $32\pi^2$ & 4 & 1 & 3 &  \\
 & 14 & 1 & $2\pi^2$ & 4 & 1 & 0 & $\star$ \\
 &  & 7 & $8\pi^2$ & 4 & 1 & 1 & $\star$ \\
 & 34 & 1 & $(16/3)\pi^2$ & 2 & 3 & 0 & $\star$ \\
 &  & 1 & $(16/3)\pi^2$ & 2 & 3 & 0 &  \\
 & 98 & 1 & $16\pi^2$ & 2 & 1 & 0 & $\star$ \\
 &  & 1 & $16\pi^2$ & 2 & 1 & 0 &  \\
\end{tabular}
\end{multicols}
\begin{center}
\textbf{Table 3a}: Commensurability classes for degree $n=4$ (1 of 2)
\end{center}

\newpage

\begin{multicols}{2}
\begin{tabular}{ccc|ccccc}
$d$ & $D$ & $N$ & vol & hol & lcm & $\nu$ & st\\
\hline
4752 & 1 & 1 & $(4/3)\pi^2$ & 2 & 6 & 0 & $\star$ \\
 &  & 1 & $(4/3)\pi^2$ & 2 & 6 & 0 &  \\
 &  & 3 & $(8/3)\pi^2$ & 4 & 3 & 1 & $\star$ \\
 &  & 3 & $(8/3)\pi^2$ & 4 & 3 & 1 &  \\
 &  & 11 & $8\pi^2$ & 4 & 1 & 1 & $\star$ \\
 &  & 11 & $8\pi^2$ & 4 & 1 & 1 &  \\
 &  & 23 & $16\pi^2$ & 4 & 1 & 1 & $\star$ \\
 &  & 33 & $16\pi^2$ & 4 & 1 & 2 & $\star$ \\
 &  & 33 & $16\pi^2$ & 4 & 1 & 2 &  \\
 & 12 & 1 & $2\pi^2$ & 4 & 1 & 0 & $\star$ \\
 &  & 1 & $2\pi^2$ & 4 & 1 & 0 &  \\
 & 39 & 1 & $8\pi^2$ & 4 & 1 & 0 & $\star$ \\
4913 & 1 & 1 & $(4/3)\pi^2$ & 4 & 6 & 0 & $\star$ \\
 &  & 1 & $(4/3)\pi^2$ & 4 & 6 & 0 &  \\
 & 68 & 1 & $16\pi^2$ & 4 & 1 & 0 & $\star$ \\
 &  & 1 & $16\pi^2$ & 4 & 1 & 0 &  \\
5125 & 45 & 1 & $(32/3)\pi^2$ & 4 & 3 & 0 & $\star$ \\
6809 & 1 & 1 & $(8/3)\pi^2$ & 4 & 6 & 0 & $\star$ \\
 &  & 2 & $4\pi^2$ & 4 & 1 & 1 & $\star$ \\
 &  & 5 & $8\pi^2$ & 4 & 1 & 1 & $\star$ \\
 &  & 11 & $16\pi^2$ & 4 & 1 & 1 & $\star$ \\
 & 10 & 1 & $(8/3)\pi^2$ & 4 & 3 & 0 & $\star$ \\
 &  & 11 & $16\pi^2$ & 4 & 1 & 1 & $\star$ \\
7056 & 1 & 1 & $(8/3)\pi^2$ & 2 & 6 & 0 &  \\
 &  & 3 & $(16/3)\pi^2$ & 4 & 3 & 1 & $\star$ \\
 &  & 3 & $(16/3)\pi^2$ & 4 & 3 & 1 &  \\
 &  & 9 & $(32/3)\pi^2$ & 4 & 3 & 2 &  \\
 & 9 & 1 & $(8/3)\pi^2$ & 4 & 6 & 0 &  \\
 & 12 & 1 & $4\pi^2$ & 4 & 1 & 0 & $\star$ \\
 &  & 1 & $4\pi^2$ & 4 & 1 & 0 &  \\
 &  & 3 & $8\pi^2$ & 4 & 1 & 1 & $\star$ \\
 &  & 3 & $8\pi^2$ & 4 & 1 & 1 &  \\
 & 75 & 1 & $32\pi^2$ & 4 & 1 & 0 & $\star$ \\
7625 & 20 & 1 & $8\pi^2$ & 4 & 1 & 0 & $\star$ \\
\end{tabular}

\begin{tabular}{ccc|ccccc}
$d$ & $D$ & $N$ & vol & hol & lcm & $\nu$ & st \\
\hline
8069 & 1 & 1 & $(4/3)\pi^2$ & 4 & 6 & 0 & $\star$ \\
 &  & 1 & $(8/3)\pi^2$ & 2 & 6 & 0 & $\star$ \\
 &  & 5 & $8\pi^2$ & 4 & 2 & 0 & $\star$ \\
 &  & 5 & $8\pi^2$ & 4 & 2 & 1 & $\star$ \\
 & 35 & 1 & $16\pi^2$ & 4 & 1 & 0 & $\star$ \\
9248 & 1 & 1 & $(8/3)\pi^2$ & 4 & 6 & 0 &  \\
 &  & 1 & $(16/3)\pi^2$ & 2 & 6 & 0 &  \\
 &  & 2 & $8\pi^2$ & 4 & 2 & 0 &  \\
 &  & 2 & $8\pi^2$ & 4 & 2 & 1 & $\star$ \\
 &  & 2 & $8\pi^2$ & 4 & 2 & 1 &  \\
 & 4 & 1 & $(4/3)\pi^2$ & 4 & 3 & 0 & $\star$ \\
 &  & 1 & $(4/3)\pi^2$ & 4 & 3 & 0 &  \\
 & 26 & 1 & $16\pi^2$ & 4 & 1 & 0 & $\star$ \\
9909 & 15 & 1 & $8\pi^2$ & 4 & 1 & 0 & $\star$ \\
10273 & 1 & 2 & $8\pi^2$ & 4 & 2 & 1 & $\star$ \\
 &  & 6 & $16\pi^2$ & 4 & 1 & 2 & $\star$ \\
 & 6 & 1 & $(8/3)\pi^2$ & 4 & 6 & 0 & $\star$ \\
 & 26 & 1 & $16\pi^2$ & 4 & 1 & 0 & $\star$ \\
10889 & 1 & 1 & $(8/3)\pi^2$ & 4 & 6 & 0 & $\star$ \\
 &  & 2 & $8\pi^2$ & 4 & 2 & 0 & $\star$ \\
 &  & 2 & $8\pi^2$ & 4 & 2 & 1 & $\star$ \\
 &  & 11 & $16\pi^2$ & 4 & 1 & 1 & $\star$ \\
 & 14 & 1 & $8\pi^2$ & 4 & 2 & 0 & $\star$ \\
13068 & 6 & 1 & $4\pi^2$ & 4 & 1 & 0 & $\star$ \\
 &  & 1 & $4\pi^2$ & 4 & 1 & 0 &  \\
\end{tabular}
\end{multicols}
\begin{center}
\textbf{Table 3b}: Commensurability classes for degree $n=4$ (2 of 2)
\end{center}
\newpage

\begin{center}
\begin{tabular}{ccc|ccccc}
$d$ & $D$ & $N$ & vol & hol & lcm & $\nu$ & st\\
\hline
24217 & 5 & 1 & $(2/3)\pi^2$ & 4 & 3 & 0 & $\star$ \\
 &  & 23 & $8\pi^2$ & 4 & 1 & 1 & $\star$ \\
 &  & 47 & $16\pi^2$ & 4 & 1 & 1 & $\star$ \\
 & 17 & 1 & $(8/3)\pi^2$ & 4 & 3 & 0 & $\star$ \\
 &  & 5 & $8\pi^2$ & 4 & 1 & 1 & $\star$ \\
 & 97 & 1 & $16\pi^2$ & 4 & 1 & 0 & $\star$ \\
36497 & 3 & 1 & $(2/3)\pi^2$ & 4 & 6 & 0 & $\star$ \\
 &  & 23 & $8\pi^2$ & 4 & 1 & 1 & $\star$ \\
 &  & 47 & $16\pi^2$ & 4 & 1 & 1 & $\star$ \\
 & 13 & 1 & $4\pi^2$ & 4 & 1 & 0 & $\star$ \\
 &  & 3 & $8\pi^2$ & 4 & 1 & 1 & $\star$ \\
 & 25 & 1 & $8\pi^2$ & 4 & 1 & 0 & $\star$ \\
 &  & 3 & $16\pi^2$ & 4 & 1 & 1 & $\star$ \\
 & 49 & 1 & $16\pi^2$ & 4 & 1 & 0 & $\star$ \\
38569 & 7 & 1 & $2\pi^2$ & 4 & 2 & 0 & $\star$ \\
 & 13 & 1 & $4\pi^2$ & 4 & 1 & 0 & $\star$ \\
 &  & 7 & $16\pi^2$ & 4 & 1 & 1 & $\star$ \\
 & 17 & 1 & $(16/3)\pi^2$ & 4 & 3 & 0 & $\star$ \\
81509 & 2 & 1 & $(4/3)\pi^2$ & 4 & 6 & 0 & $\star$ \\
 & 9 & 2 & $16\pi^2$ & 4 & 1 & 1 & $\star$ \\
81589 & 2 & 1 & $(4/3)\pi^2$ & 4 & 6 & 0 & $\star$ \\
 &  & 11 & $8\pi^2$ & 4 & 1 & 1 & $\star$ \\
 & 13 & 1 & $16\pi^2$ & 4 & 1 & 0 & $\star$ \\
89417 & 3 & 1 & $(8/3)\pi^2$ & 4 & 6 & 0 & $\star$ \\
 &  & 5 & $8\pi^2$ & 4 & 2 & 1 & $\star$ \\
 &  & 11 & $16\pi^2$ & 4 & 1 & 1 & $\star$ \\
 & 5 & 1 & $(16/3)\pi^2$ & 4 & 3 & 0 & $\star$ \\
138917 & 4 & 1 & $8\pi^2$ & 4 & 2 & 0 & $\star$ \\
 &  & 3 & $16\pi^2$ & 4 & 1 & 1 & $\star$ \\
\end{tabular}

\end{center}
\begin{center}
\textbf{Table 4}: Commensurability classes for degree $n=5$ (1 of 1)
\end{center}

\vspace{3ex}

\begin{center}
\begin{tabular}{ccc|ccccc}
$d$ & $D$ & $N$ & vol & hol & lcm & $\nu$ & st\\
\hline
1134389 & 1 & 1 & $(4/3)\pi^2$ & 4 & 6 & 0 & $\star$ \\
 &  & 1 & $(8/3)\pi^2$ & 2 & 6 & 0 & $\star$ \\
\end{tabular}
\end{center}
\begin{center}
\textbf{Table 5}: Commensurability classes for degree $n=6$ (1 of 1)
\end{center}

\end{footnotesize}

\end{document}